\newcommand{\rank}{{\rm rank}}
\newcommand{\bt}{t}%{{\bf t}} 
\newcommand{\p}{p}%{{\bf p}}
\newcommand{\bq}{{q}}
\newcommand{\bu}{{u}}
\newcommand{\bm}{m}%{{\bf m}} 
\newcommand{\bn}{{n}}
\newcommand{\A}{\mathcal A}
\newcommand{\C}{\mathcal C}
\newcommand{\M}{\mathcal M}
\newcommand{\gs}{\M_{\C}} %gain space
\newcommand{\pogp}{\langle G', \bm' \rangle}
\newcommand{\pogn}{\langle G, \bn \rangle}
\newcommand{\tg}{\langle G, \bm \rangle}
\newcommand{\tgn}{\langle G, \bn \rangle}
\newcommand{\tgT}{\langle G, \bm_T \rangle}
\newcommand{\Tor}{\mathcal T_0} %Fixed Torus
\newcommand{\Unit}{\mathcal U}	%Fixed Unit Torus
\newcommand{\T}{\mathcal T} %Flex Torus d-dimensions
\newcommand{\wt}{\widetilde}
\newcommand{\pfwflr}{(\langle \wt G, L \rangle , R\wt \p)} %Periodic FrameWork
\newcommand{\pfwf}{(\langle \wt G, L_0 \rangle , \wt \p)} %Periodic FrameWork Fixed
\newcommand{\pofw}{(\langle G, \bm \rangle,\p)} %Periodic Orbit FrameWork
\newcommand{\pofwT}{(\langle G, \bm_T \rangle,\p)} %Periodic Orbit FrameWork T-gain
\newcommand{\pofwA}{(\langle G, \bm \rangle,A(\p))} %Periodic Orbit FrameWork Affine
\newcommand{\pog}{\langle G, \bm \rangle} %Periodic Orbit Graph
\newcommand{\BS}{(\widetilde{G}, \Gamma, \widetilde{\p}, \pi)} %Borcea Streinu Periodic Framework 
\newcommand{\BSg}{(\widetilde{G}, \Gamma)} %Borcea Streinu Periodic Framework 
\newcommand{\dpfwfl}{( \langle G^{\bm}, L \rangle, \p^{\bm})} %Derived Periodic FrameWork flex
\newcommand{\dpfwo}{( \langle G^{\bm}, L_0 \rangle, \p^{\bm})} %Derived Periodic FrameWork fix
\newcommand{\dpfww}{( \langle G^{\bm}, \wt L \rangle, \p^{\bm})} %Derived Periodic FrameWork \wt
\newcommand{\R}{\mathbf R} %Rigidity Matrix
\definecolor{desk}{rgb}{.345, .306, .216}
\definecolor{vancouver}{rgb}{.412, .412,.412}
\definecolor{beetle}{rgb}{.180, .161, .102}
\definecolor{bluey}{rgb}{.235, .380, .415}
\definecolor{melon}{rgb}{1, .259, .259}
\definecolor{vneck}{rgb}{.596, .282, .376}
\definecolor{pink}{rgb}{.918, .122, .545}
\definecolor{mango}{rgb}{1, .8, .267}
\definecolor{lips}{rgb}{.541, .074, .239}
\definecolor{sage}{rgb}{.522, .604, .247}
\definecolor{moss}{rgb}{.184, .224, .129}
\definecolor{cumin}{rgb}{.6, .580, 0}
\definecolor{lichen}{rgb}{.745, .998, .729}
\definecolor{rain}{rgb}{.780, .812, .706 }
\definecolor{cloud}{rgb}{.961, .976, .870}
\definecolor{couch}{rgb}{.8, 1, .2}
\definecolor{cement}{rgb}{.678, .682, .549}
\definecolor{sky}{rgb}{.278, .514, 1}
\newtheorem{thm}{Theorem}[section]
\newtheorem{cor}[thm]{Corollary}
\newtheorem{lem}[thm]{Lemma}
\newtheorem{prop}[thm]{Proposition}
\newtheorem{conj}[thm]{Conjecture}
\theoremstyle{remark}
\newtheorem{rem}[thm]{Remark}
\theoremstyle{remark}
\newtheorem{ex}[thm]{Example}
\theoremstyle{remark}
\newtheorem{question}[thm]{Question}
\begin{document}

\title{The rigidity of periodic frameworks as graphs on a fixed torus}
\date{September 19, 2011}
\author{
{Elissa  Ross
\thanks{Fields Institute for Research in Mathematical Sciences, Toronto, Canada.}}
}
\maketitle

\begin{abstract} 
We define periodic frameworks as graphs on the torus, using the language of gain graphs. We present some fundamental definitions and results about the infinitesimal rigidity of graphs on a torus of fixed size and shape, and find necessary conditions for the generic rigidity of periodic frameworks on a $d$-dimensional fixed torus.
\\	

\noindent
{\bf MSC:} 
52C25 \\%rigidity and flexibility of structures

\noindent 
{\bf Key words:}  infinitesimal rigidity, generic rigidity, periodic frameworks, gain graphs, flat torus
\end{abstract}

%\tableofcontents

\section{Introduction}
Like many problems in the field of discrete geometry, the question of the {\it rigidity of a framework} admits a simple formulation. Given a set of physically rigid bars which are linked together by flexible joints, when is it possible to continuously deform the resulting framework into a non-congruent structure, without destroying the connectivity or the bars themselves? In other words, when is such a framework {\it flexible},  and therefore not {\it rigid}? 

The study of rigidity has a rich history of questions generated by applications in structural engineering, mechanical engineering (in the study of linkages),  chemistry, biology, materials science  and computing, which then inspire and motivate a body of mathematical research. The study of {\it periodic rigidity} can be seen as exactly such a case, with one of the main inspirations coming from the study of zeolites. Zeolites are a type of mineral with a crystalline structure characterized by a repetitive (periodic) porous pattern and a high internal surface area \cite{flexibilityWindow}. Since the activity of these materials in appliations appears to depend in part on their flexibility, it is desirable to have methods that would predict the rigidity or flexibility of these hypothetical minerals prior to laboratory synthesis.

Infinite periodic frameworks in $3$-space are used to model the molecular structure of zeolites, and as a result, there has been a recent surge of interest in the rigidity of these periodic structures. Examples of such work include Fowler and Guest   \cite{triangulatedToroidalFrameworks} and Guest and Hutchinson \cite{DeterminancyRepetitive}, both of which address two and three dimensional frameworks (with a view toward materials). \index{zeolites} 
Even more recently, work by Owen and Power \cite{InfiniteBarJointFrameworksCrystals}, Power \cite{PowerAffine}, Borcea and Streinu  \cite{BorceaStreinuII, periodicFrameworksAndFlexibility} and Malestein and Theran \cite{Theran} has  formalized the mathematics involved in  a general ($d$-dimensional) study of infinite periodic frameworks and provided substantial initial results. 

In this paper we describe one structure and corresponding vocabulary for this investigation. We outline results from a natural ``base case" for the study of general infinite periodic frameworks, namely frameworks on a torus of fixed size and shape. While at first the question of rigidity on a ``fixed torus" may seem contrived, several materials scientists have confirmed that there may be some resonance with experiments on molecular compounds in which the time scales of lattice movement are several orders of magnitude slower than the molecular deformations within the lattice \cite{ThorpePrivate}. When we allow the lattice (torus) to deform, the velocities of the vertices that are ``far away from the centre" will become arbitrarily large. 

The results of this paper will lay the groundwork for a subsequent paper \cite{ThesisPaper2} which will provide sufficient conditions for the generic rigidity of a 2-dimensional framework on a fixed torus. In addition, the structures and vocabulary contained here are employed in recent joint work with Bernd Schulze and Walter Whiteley \cite{SymmetryPeriodic} concerning periodic framework with additional symmetry. 
%The techniques of that paper are inductive: building up larger rigid graphs from smaller ones, which distinguishes the results from those of other authors working on related problems, namely Malestein and Theran. 

The central idea that underlies our research is to exploit the periodicity of the infinite graph to reduce the problem to a finite graph that captures the periodic structure. We accomplish this by considering quotient graphs on tori. For example, to study two-dimensional infinite periodic frameworks, we view the two-dimensional torus as a fundamental region for a tiling of the plane, and consider graphs realized on the torus as models of infinite periodic frameworks in the plane. Any motion of the elements of the framework on the torus can be viewed as a {\it periodic} motion of the plane graph. We can similarly consider graphs on the $d$-torus (equivalently the $d$-dimensional hypercube with pairs of opposite faces identified) and use this as a model of a $d$-dimensional periodic framework. {\it Gain graphs} \cite{TopologicalGraphTheory, gainGraphBibliography} provide a useful language for the description of these graphs, and the tools of topological graph theory will be used to show that graphs in the same homotopy class share the same rigidity properties (the $T$-gain procedure). 

There are three qualities of infinite periodic frameworks that are of interest to the study of their rigidity: 
\begin{enumerate}[(i)]
	\item the combinatorial properties of the graph,
	\item the geometric position of vertices of the graph on the torus, and in its cover in $d$-space,
	\item the topological structure (up to homotopy) of the graph on the torus. 
\end{enumerate}
The usual study of rigidity of finite frameworks (as described in \cite{CombRigidity,CountingFrameworks,SomeMatroids} for example) is an investigation of (i) and (ii), but the consideration of (iii) is unique to the study of periodic frameworks. It should be noted that the approach of Borcea and Streinu \cite{BorceaStreinuII,periodicFrameworksAndFlexibility} to the study of periodic frameworks does not explicitly include the consideration of topological structures as in (iii). Like the approach outlined in this paper, Malestein and Theran \cite{Theran} also choose to include these topological properties. Their  work is concerned with frameworks on a flexible torus, and is restricted to two-dimensional frameworks. In this paper we describe necessary conditions for the rigidity of periodic frameworks on the fixed torus in $d$-dimensions. Of course necessary conditions for rigidity can always be viewed as sufficient conditions for flexibility, something that we exploit in \cite{SymmetryPeriodic}.

We remark finally that this work is concerned with {\it forced periodicity}. That is, we are interested in motions of a periodic structure that preserve the periodicity of the structure. An infinite periodic framework may have motions that break the periodic symmetry of the framework, but we will not address these motions here. The consideration of periodicity-breaking motions would be the study of {\it incidental periodicity}, frameworks which happen to be periodic, but do not necessarily preserve their periodicity through some motion of their joints. That is a distinct, though important, topic.

\subsection{Outline of paper}

In Section~\ref{sec:background} we present some necessary background from topological graph theory, specifically introducing gain graphs. We also introduce the fixed torus. In Section~\ref{sec:periodicDefs} we define periodic frameworks as graphs on the torus, and show that this definition is sensible. Section~\ref{sec:rigidity} contains numerous periodic-adapted versions of standard definitions and results of rigidity theory, including a definition of the periodic rigidity matrix. We provide some context for the current approach in light of recent work in the area by other authors. We also prove that all graphs in the same homotopy class on the fixed torus have the same generic rigidity properties (Corollary \ref{cor:TgainsPreserveGeneric}). In Section~\ref{sec:necessary} we prove necessary conditions on periodic orbit frameworks for generic rigidity on the fixed $d$-dimensional torus (Theorem \ref{thm:dNecessary}). We conclude in Section~\ref{sec:furtherWork} with some areas for further work, and a preview of the sequel paper \cite{ThesisPaper2}. 

\section{Background}
\label{sec:background}
\subsection{Gain graphs}
\label{sec:gainGraphs}
{\it Gain graphs} are a concise way of describing infinite periodic graphs.  We also view a gain graph as a set of instructions for how to realize a graph on the torus, although this need not be a $2$-cell embedding (an embedding without crossings), which distinguishes this treatment from other discussions of gain graph realizations \cite{TopologicalGraphTheory}. Note that in some literature, namely Gross and Tucker's book \cite{TopologicalGraphTheory}, these graphs are called {\it voltage graphs}. \index{voltage graph|see{gain graph}} Our discussion here is based on that presentation, but we use the word `gain' to avoid the extra connotations given by the term `voltage,' and to connect to the larger body of literature on the topic of gain graphs \cite{gainGraphBibliography}. 

%Let $G = (V, E)$ be a connected multigraph possibly having loops and multiple edges with vertices $V = \{v_0, v_1, \dots, v_{|V|}$\}, $|V| < \infty$. Let the edges of $G$ be assigned plus and minus directions. Let $\bm$ be a set function from the plus-directed edges into $\mathbb Z^d$. The pair $\pog$ is called a {\it gain graph}. $G$ is called the {\it base graph} of $\pog$, and $\bm: E \rightarrow \mathbb Z^d$ is called the {\it gain assignment}. Although $G$ is technically a directed graph, we do not use the notation $\vec{G}$ here for reasons that will soon become clear. See Remark \ref{rem:digraph} below.

Let $G = (V, E)$ be a connected multigraph possibly having loops and multiple edges with vertices $V = \{v_0, v_1, \dots, v_{n}$\}, $|V| = n < \infty$. Let the edges of $G$ be assigned both plus and minus directions. Let $\bm$ be a set function from the plus-directed edges into $\mathbb Z^d$. The pair $\pog$ is called a {\it gain graph}.  $G$ is called the {\it base graph} \index{gain graph!base graph}of $\pog$, $\bm: E \rightarrow \mathbb Z^d$ is called the {\it gain assignment}. In general, gain graphs have edges which are labeled by elements of a group $\A$ (the {\it gain group}), but for reasons that will soon become clear, we will use $\A = \mathbb Z^d$ throughout this paper. 

The vertices of $\pog$ are the same as the vertices of $G$: $V\pog = \langle V, \bm \rangle = V$. The edges of $\pog$ are denoted $\langle E, \bm \rangle$ or $E\pog$. An edge $e$ in $\langle E, \bm \rangle$  is denoted
\begin{equation}
e = \{v_i, v_j; \bm_e\}, \textrm{\ or \ } \{i, j; m_e\},
\label{eqn:edge1}
\end{equation}
where $\{v_i, v_j\} \in E$. \index{gain graph!edges} This represents the directed edge from vertex $v_i$ to vertex $v_j$, which is labeled with the gain $\bm_e$. This edge may equivalently be written in the reverse order, by using the group inverse $m_e^{-1} = -m_e$ of the gain assignment on $e$:
\begin{equation}
e = \{v_j, v_i; \bm_e^{ -1}\}.
\label{eqn:edge2}
\end{equation}

A {\it subgraph} of $\pog$ is a gain graph $\pogp$ where $G' \subset G$ is a subgraph of $G$, and $m'$ is the restriction of $m$ to the edges of $G'$. \index{gain graph!subgraph} 

A {\it path} of $\pog$ is defined to be a path of the base graph $G$. \index{gain graph!path}  We record a path of $\pog$ by
\[P = e_1^{\alpha_1} e_2^{\alpha_2} \cdots e_k^{\alpha_k}, \]
where $e_i \in E\pog$, and $\alpha_i$ is either $+1$ or $-1$ depending on the orientation of the edge in the path. This allows us to define the {\it net gain on the path} to be the sum of the elements on the edges of the path, with the appropriate multiplier ($+1$ or $-1$) according to the orientation of the edges in the path:
\[\sum_{i=1}^k \alpha_i \bm(e_i).\]
We similarly define a {\it cycle} of $\pog$ to be a cycle of the base graph $G$, and the {\it net gain on the cycle} is defined as for paths. 

For example, consider the graph in Figure \ref{fig:gainCycle}. Suppose the edge $e_i$ has gain $m_i$, as labeled. Then the cycle in the graph shown in Figure \ref{fig:gainCycle} given by
\[e_1^{+1} e_2^{+1} e_4^{-1} e_5^{+1} = \{1, 2; m_1\} \{2, 3; m_2\} \{3, 4; -m_4\} \{4, 1; m_5\}\]
has net gain $m_1 + m_2 - m_4 + m_5$.

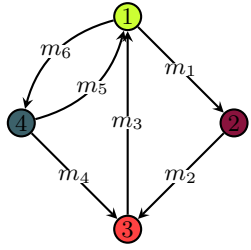
\begin{figure}\begin{center}
\begin{tikzpicture}[auto, node distance=2cm, thick]
\tikzstyle{vertex1}=[circle, draw, fill=couch, inner sep=1pt, minimum width=3pt, font=\footnotesize];
\tikzstyle{vertex2}=[circle, draw, fill=lips, inner sep=1pt, minimum width=3pt, font=\footnotesize];
\tikzstyle{vertex3}=[circle, draw, fill=melon, inner sep=1pt, minimum width=3pt, font=\footnotesize];
\tikzstyle{vertex4}=[circle, draw, fill=bluey, inner sep=1pt, minimum width=3pt, font=\footnotesize];
\tikzstyle{gain} = [fill=white, inner sep = 0pt,  font=\footnotesize, anchor=center];

\node[vertex1] (1) {$1$};
\node[vertex2] (2) [below right of=1] {$2$};
\node[vertex3] (3) [below left of=2] {$3$};
\node[vertex4] (4) [below left of=1] {$4$};

%\path[thick] (1) edge (2)
%(2) edge (3)
%(3) edge   (1)
%(1) edge [bend right]  (4)
%(4) edge [bend right]   (1)
%(4) edge  (3);

\pgfsetarrowsend{stealth}[ shorten >=1cm]
\path
(3) edge node[gain] {$m_3$} (1)
(1) edge [bend right] node[gain] {$m_6$} (4)
(1) edge node[gain] {$m_1$} (2)
(2) edge node[gain] {$m_2$} (3)
(4) edge [bend right] node[gain] {$m_5$}  (1)
(4) edge node[gain] {$m_4$} (3);
\pgfsetarrowsend{}

\end{tikzpicture}\end{center}
\caption{A gain graph $\pog$, $m: E \rightarrow \mathbb Z^d$. \label{fig:gainCycle}}
\end{figure}

The {\it edge space} \index{edge space} $\mathcal E(G)$  of a graph $G=(V, G)$ is the set of functions $E \rightarrow \mathbb F_2 = \{0,1\}$. The elements of $\mathcal E(G)$ are naturally associated with the subsets of $E$, however the edge set thus defined has the structure of a vector space. The elements are the subsets of $E$, vector addition is the same as symmetric difference, $\emptyset \subseteq E$ is the zero element, and $F = -F$ for all $F \in \mathcal E(G)$. See \cite{Diestel} for further details. 

The {\it cycle space} \index{cycle space} $\C = \C(G)$ of $G$ is the subspace of $\mathcal E(G)$ spanned by the (edge sets of the) cycles of $G$. Suppose $\pog$ is a gain graph where $\C(G)$ is the cycle space of the (undirected) graph $G$. The {\it gain space} $\gs(G)$ is the vector space (over $\mathbb Z$) spanned by the net gains on the cycles of $\C(G)$. \index{gain graph!gain space} \index{gain space} 

\begin{rem}
In contrast to cycles in directed graphs, we permit re-direction of the edges of a gain graph provided that they are accompanied by a relabelling of the gains on the edges as well (by the equivalence of (\ref{eqn:edge1}) and (\ref{eqn:edge2})). In this way, we should think of cycles in the gain graph as corresponding one-to-one with cycles in the base graph. That is, the gain graph $\pog$ has the same cycle space as the base graph $G$, while a directed graph does not share the same cycle space as its underlying graph. 
\label{rem:digraph} \qed
\end{rem}

\subsection{Derived graphs corresponding to gain graphs}

%From the gain graph we define a related graph called the {\it derived graph} which we denote $G^{\bm}$. The vertex set $V^{\bm}$ is the Cartesian product $V \times \mathbb Z^d$, and the edge set $E^{\bm}$ is the Cartesian product $E \times \mathbb Z^d$. Vertices of $V^{\bm}$ have the form  $(v_i, a)$, where $v_i \in V$, and $a \in \mathbb Z^d$. Edges of $E^{\bm}$ are denoted similarly. If $e$ is the directed edge connecting vertex $v_i$ to $v_j$ in $\pog$, and $b$ is the gain assigned to the edge $e$, then the directed edge $\{e ;a \}$ of $G^{\bm}$ connects vertex $(v_i,a)$ to $(v_j, a + b)$. In this way, the derived graph is a directed infinite graph whose automorphism group contains $\mathbb Z^d$.\\
The key feature of gain graphs is that from a gain graph $\pog$ we may define a related graph called the {\it derived graph} \index{derived graph} which we denote $G^{\bm}$.  The derived graph $G^m$ has vertex set $V^m$ and $E^m$ where $V^{\bm}$ is the Cartesian product $V \times \mathbb Z^d$, and $E^{\bm}$ is the Cartesian product $E \times \mathbb Z^d$. Vertices of $V^{\bm}$ have the form  $(v_i, a)$, where $v_i \in V$, and $a \in \mathbb Z^d$. Edges of $E^{\bm}$ are denoted similarly. \index{derived graph!edges} If $e$ is the directed edge connecting vertex $v_i$ to $v_j$ in $\pog$, and $b$ is the gain assigned to the edge $e$, then the directed edge $\{e ;a \}$ of $G^{\bm}$ connects vertex $(v_i,a)$ to $(v_j, a+b)$. In this way, the derived graph is a (directed) graph whose automorphism group contains $\mathbb Z^d$. 

If $v$ is a vertex in the gain graph, then the set of vertices $\{(v,a): a \in \mathbb Z^d\}$ in the vertices $V^{\bm}$ of the derived graph is called the {\it fiber} \index{derived graph!fiber} over $v$. Similarly, the set of edges $\{(e,a): a \in \mathbb Z^d\}$ is the {\it fiber} over the edge $e \in E$. There is a {\it natural projection} from the derived graph to the base graph which is the graph map $\phi: G^{\bm} \rightarrow G$ that maps every vertex (resp. edge) in the fiber over $v$ (resp. $e$) to the vertex $v$ (resp. $e$) for all $v \in V$ (resp. $e \in E$).  Since $\mathbb Z^d$ is an infinite group, this representation allows us to view gain graphs as a `recipe' for an infinite periodic graph. 

\begin{ex}
Let $\pog$ be the gain graph pictured in Figure \ref{fig:finiteGraph}, with gain group $\mathbb Z^2$. The unlabeled, undirected edges have gain $(0,0)$. Part of the corresponding derived graph $G^m$ is pictured in \ref{fig:periodicFramework}. $G^m$ has a countably infinite number of vertices and edges. 
\qed
\end{ex}

\begin{verse}
\begin{figure}[h!]
\begin{center}
\subfloat[$\pog$]{\label{fig:finiteGraph}\begin{tikzpicture}[auto, node distance=2cm, thick]
\tikzstyle{vertex1}=[circle, draw, fill=couch, inner sep=1pt, minimum width=3pt, font=\footnotesize];
\tikzstyle{vertex2}=[circle, draw, fill=lips, inner sep=1pt, minimum width=3pt, font=\footnotesize];
\tikzstyle{vertex3}=[circle, draw, fill=melon, inner sep=1pt, minimum width=3pt, font=\footnotesize];
\tikzstyle{vertex4}=[circle, draw, fill=bluey, inner sep=1pt, minimum width=3pt, font=\footnotesize];
\tikzstyle{gain} = [fill=white, inner sep = 0pt,  font=\footnotesize, anchor=center];

\node[vertex1] (1) {$1$};
\node[vertex2] (2) [below right of=1] {$2$};
\node[vertex3] (3) [below left of=2] {$3$};
\node[vertex4] (4) [below left of=1] {$4$};

%\path[thick] (1) edge (2)
%(2) edge (3)
%(3) edge   (1)
%(1) edge [bend right]  (4)
%(4) edge [bend right]   (1)
%(4) edge  (3);

\pgfsetarrowsend{stealth}[ shorten >=1cm]
\path
(3) edge node[gain] {$(1,0)$} (1)
(1) edge [bend right] node[gain] {$(0,1)$} (4);
\pgfsetarrowsend{}

%edges with (0,0) gain
\path (1) edge  (2)
(2) edge  (3)
(4) edge [bend right]   (1)
(4) edge  (3);

\end{tikzpicture}}\hspace{.5in}%
\subfloat[$G^m$]{\label{fig:periodicFramework}\includegraphics[width=1.5in]{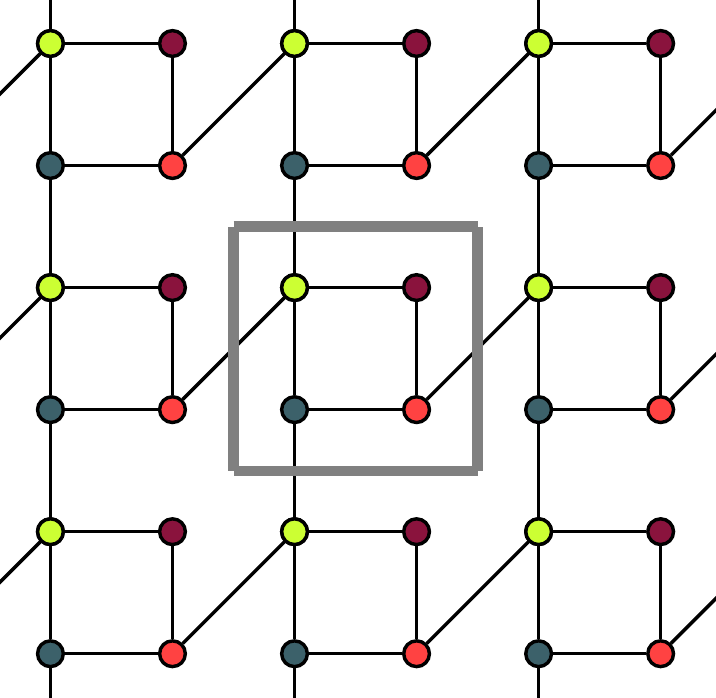}}

\caption{A gain graph $\pog$, where $m:E \rightarrow \mathbb Z^2$, and its derived graph $G^m$.  We use graphs with vertex labels as in (a) to depict gain graphs, and graphs without such vertex labels will record derived graphs, or graphs that are realized in $\mathbb R^d$. \label{fig:gainGraph}}
\end{center}
\end{figure}\end{verse}

%% T-GAINS
\subsection{Local gain groups and the $T$-gain procedure}
\label{sec:Tgains} 
Let $u$ be a vertex of the gain graph $\pog$, and let $W$ and $W'$ be distinct closed walks that begin and end at $u$. The walk $WW'$ is also a $u$-based closed walk. The set of all such walks forms a semigroup, with the product operation so defined. It was observed by Alpert and Gross that the set of net gains occurring on $u$-based closed walks forms a subgroup of the gain group \cite{TopologicalGraphTheory}. We call this group the {\it local gain group at $u$}. \index{local gain group} \index{gain graph!local gain group} For a connected graph, it is clear that there is a unique local gain group that is independent of the choice of base vertex $u$. Furthermore, Gross and Tucker \cite{TopologicalGraphTheory} observe that we can extend the idea of local gain group to a notion of the fundamental group of a graph. They note that the standard topological theorems relating fundamental groups and covering spaces may be obtained for graphs. Furthermore, this justifies the use of the term ``graph homotopic" to describe the $T$-gain procedure, or any other transformation which preserves the cycles of a gain graph $\pog$ and their net gains. We now describe a procedure to isolate the local gain group of a gain graph. 

%When the gain group, $\A$, is the integer lattice $\mathbb Z^d$, the gain space of $\pog$ and the local gain group of $\pog$ will be the same. In general, however, this is not the case, since the local gain group is a group, and the gain space is a vector space. 

%\comm{check this. Do we need walks? Or can we just use cycles? From a base vertex we need walks, otherwise we may not get to all cycles of the graph (dumbbell). However the cycle space has all the cycles. }

If our graph is a bouquet of loops, then the local gain group is simply the group generated by the gains of the loops. If our graph is not, however, a bouquet of loops, how do we find the local gain group? We have an algorithm called the {\it T-gain procedure} \index{T-gain procedure} that will effectively transform our graph into a bouquet of loops. It appears in \cite{TopologicalGraphTheory} and we outline it here. See Figure \ref{fig:Tvoltage} for a worked example.

\begin{verse}
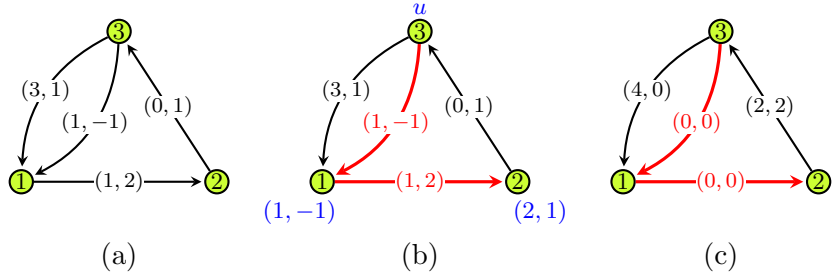
\begin{figure}[h!]
\begin{center}
\begin{tikzpicture}[->,>=stealth,shorten >=1pt,auto,node distance=2.8cm,thick, font=\footnotesize] 
\tikzstyle{vertex1}=[circle, draw, fill=couch, inner sep=.5pt, minimum width=3.5pt, font=\footnotesize]; 
\tikzstyle{vertex2}=[circle, draw, fill=melon, inner sep=.5pt, minimum width=3.5pt, font=\footnotesize]; 
\tikzstyle{voltage} = [fill=white, inner sep = 0pt,  font=\scriptsize, anchor=center];

	\node[vertex1] (1) at (-1.3,0)  {$1$};
	\node[vertex1] (2) at (1.3,0) {$2$};
	\node[vertex1] (3) at (0, 2) {$3$};
		\draw[thick] (1) -- node[voltage] {$(1,2)$} (2);
		\draw[thick] (2) -- node[voltage] {$(0,1)$} (3);
	\draw[thick] (3) edge  [bend right]  node[voltage] {$(3,1)$} (1);
	\draw[thick] (3) edge  [bend left] node[voltage] {$(1,-1)$} (1);
	
	\node[font=\normalsize] at (0, -1) {(a)};
	
	\pgftransformxshift{4cm};
	
		\node[vertex1] (1) at (-1.3,0)  {$1$};
	\node[vertex1] (2) at (1.3,0) {$2$};
	\node[vertex1] (3) at (0, 2) {$3$};
		\draw[very thick, red] (1) -- node[voltage] {$(1,2)$} (2);
		\draw[thick] (2) -- node[voltage] {$(0,1)$} (3);
	\draw[thick] (3) edge  [bend right]  node[voltage] {$(3,1)$} (1);
	\draw[very thick, red] (3) edge  [bend left] node[voltage] {$(1,-1)$} (1);
	
	\node[blue] at (0, 2.3) {$u$};
	\node[blue] at (-1.6, -.4) {$(1, -1)$};
	\node[blue] at (1.6, -.4) {$(2, 1)$};
	
	\node[font=\normalsize] at (0, -1) {(b)};
	
	\pgftransformxshift{4cm};
	
	\node[vertex1] (1) at (-1.3,0)  {$1$};
	\node[vertex1] (2) at (1.3,0) {$2$};
	\node[vertex1] (3) at (0, 2) {$3$};
		\draw[very thick, red] (1) -- node[voltage] {$(0,0)$} (2);
		\draw[thick] (2) -- node[voltage] {$(2,2)$} (3);
	\draw[thick] (3) edge  [bend right]  node[voltage] {$(4,0)$} (1);
	\draw[very thick, red] (3) edge  [bend left] node[voltage] {$(0,0)$} (1);
	
	\node[font=\normalsize] at (0, -1) {(c)};

\end{tikzpicture}
\caption{A gain graph $\pog$ in (a), with identified tree $T$ (in red), root $u$, and $T$-potentials in (b). The resulting $T$-gain graph $\langle G, \bm_T \rangle$ is shown in (c). The local gain graph is now seen to be generated by the elements $(4,0)$ and $(2,2)$, hence the local gain group is $2\mathbb Z \times 2\mathbb Z$. \label{fig:Tvoltage}}
\end{center}
\end{figure}\end{verse}

%\comm{format this like the algorithms of algorithm chapter?}

\noindent {\bf $T$-gain Procedure}
\begin{enumerate}
	\item Select an arbitrary spanning tree $T$ of $G$, and choose a vertex $u$ to be the root vertex (of the local gain group). Such a spanning tree is known to exist, as we assumed $G$ was connected.
	\item For every vertex $v$ in $G$, there is a unique path in the tree $T$ from the root $u$ to $v$. Denote the net gain along that path by $\bm(v, T)$, and we call this the {\it $T$-potential} of $v$. \index{T-gain procedure!T-potential} Compute the $T$-potential of every vertex $v$ of $G$. 
	\item Let $e$ be a plus-directed edge of $G$ with initial vertex $v$ and terminal vertex $w$. Define the {\it $T$-gain} of $e$, $\bm_T(e)$ to be 
	$$\bm_T(e) = \bm(v, T) +  \bm(e) - \bm(w, T).$$ 
Compute the $T$-gain of every edge in $G$. Note that the $T$-gain of every edge of the spanning tree will be zero. 
	\item Contract the graph along the spanning tree to obtain $|E| - (|V|-1)$ loops at the root vertex $u$ (there are $|V|-1$ edges as part of the spanning tree). The gains on these loops will generate the local gain group. In other words, the gains on all of the edges of the graph that are not contained in $T$ will generate the local gain group. 
\end{enumerate}

Since the net gain on any $u$-based closed walk is the same with respect to the $T$-gains as with respect to $\bm$, we have the following theorem: 

\begin{thm}[\cite{TopologicalGraphTheory}]
Let $\pog$ be a gain graph, and let $u$ be any vertex of $G$. Then the local gain group at $u$ with respect to the $T$-gains, for any choice of spanning tree $T$, is identical to the local group of $u$ with respect to $\bm$.  
\end{thm}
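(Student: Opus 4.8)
The plan is to verify directly the remark that immediately precedes the statement: that every $u$-based closed walk carries the same net gain under $\bm_T$ as under $\bm$. Since the local gain group at $u$ is by definition the set of net gains realized on $u$-based closed walks (which Alpert and Gross observed to be a subgroup), establishing equality of these two net-gain sets gives equality of the two groups at once.

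First I would fix an arbitrary $u$-based closed walk $W = e_1^{\alpha_1} e_2^{\alpha_2} \cdots e_k^{\alpha_k}$ and record the sequence of vertices $u = x_0, x_1, \dots, x_k = u$ it visits, where traversing $e_i^{\alpha_i}$ moves from $x_{i-1}$ to $x_i$. The single key computation is that for each step,
\[
\alpha_i \, \bm_T(e_i) = \bm(x_{i-1}, T) + \alpha_i \, \bm(e_i) - \bm(x_i, T),
\]
which I would check in the two cases $\alpha_i = +1$ and $\alpha_i = -1$ separately, using the definition $\bm_T(e) = \bm(v,T) + \bm(e) - \bm(w,T)$ for a plus-directed edge $e$ from $v$ to $w$ together with the reversal rule $\bm_e^{-1} = -\bm_e$ from (\ref{eqn:edge2}). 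In both cases the $T$-potential of the tail of the step appears with a plus sign and that of the head with a minus sign.

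Summing this identity over $i = 1, \dots, k$, the $T$-potential terms telescope, leaving
\[
\sum_{i=1}^k \alpha_i \, \bm_T(e_i) = \bm(x_0, T) - \bm(x_k, T) + \sum_{i=1}^k \alpha_i \, \bm(e_i).
\]
Because the walk is closed we have $x_0 = x_k = u$, and because the unique tree path from the root $u$ to itself is trivial we have $\bm(u, T) = 0$; hence the boundary term vanishes and the net $T$-gain of $W$ equals its net $\bm$-gain. As $W$ was arbitrary, the set of net $T$-gains on $u$-based closed walks coincides with the set of net $\bm$-gains, and the two local gain groups therefore agree.

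I do not expect a genuine obstacle here: the whole content is the telescoping identity, and the only place that demands care is the bookkeeping of edge orientations, namely making sure the sign $\alpha_i$ and the $T$-potential assignments line up correctly when an edge of the walk is traversed against its plus-direction. Independence from the choice of spanning tree $T$ is then automatic, since the net $\bm$-gains on $u$-based closed walks make no reference to $T$ at all, so every choice of $T$ recovers the same group.
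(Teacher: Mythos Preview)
Your proof is correct and follows exactly the approach the paper indicates: the paper simply asserts, in the sentence immediately preceding the theorem, that ``the net gain on any $u$-based closed walk is the same with respect to the $T$-gains as with respect to $\bm$,'' and then states the theorem as a consequence. You have supplied the telescoping computation that justifies this assertion, which the paper (citing \cite{TopologicalGraphTheory}) leaves implicit.
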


In other words, the $T$-gain procedure supplies us with the net gains on a fundamental system of cycles. It should be noted that different choices of $T$ will correspond to different fundamental systems of cycles, but all will generate the same cycle space and gain space. 

What is important for the study of rigidity is that the gain graph with $T$-gains generates the same derived graph as the gain graph $\pog$. \index{T-gain procedure!and derived graphs} Indeed:

\begin{thm}[\cite{TopologicalGraphTheory}]
Let $\pog$ be a gain graph, let $u$ be any vertex of $G$, and let $T$ be any spanning tree of $G$. Then the derived graph $G^{\bm_T}$ corresponding to $\langle G, \bm_T \rangle$ is isomorphic to the derived graph $G^{\bm}$. 
\label{thm:TGainIsomorphic}
\end{thm}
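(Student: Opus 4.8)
The plan is to exhibit an explicit graph isomorphism $\psi\colon G^{\bm} \to G^{\bm_T}$ and verify directly that it respects the edge relation. The guiding idea is that passing from $\bm$ to $\bm_T$ only shifts the gain on each edge by the difference of the $T$-potentials of its endpoints, so the induced discrepancy on the derived graph should be undone by translating each fiber by the appropriate $T$-potential.

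First I would define $\psi$ on vertices by
\[\psi(v, a) = (v,\ a - \bm(v,T)), \qquad v \in V,\ a \in \mathbb Z^d,\]
where $\bm(v,T)$ is the $T$-potential of $v$. Since $\psi$ fixes the base coordinate $v$ and acts on the second coordinate by the translation $a \mapsto a - \bm(v,T)$, it restricts to a bijection on each fiber over $v$; hence $\psi$ is a bijection $V^{\bm} \to V^{\bm_T}$, with inverse $(v,a) \mapsto (v,\ a + \bm(v,T))$.

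The key step is to check that $\psi$ carries edges of $G^{\bm}$ to edges of $G^{\bm_T}$. Let $e$ be a plus-directed edge from $v$ to $w$ with gain $\bm(e)$, so that the edge $\{e; a\}$ of $G^{\bm}$ joins $(v,a)$ to $(w,\ a + \bm(e))$. Applying $\psi$ to the two endpoints and substituting the definition $\bm_T(e) = \bm(v,T) + \bm(e) - \bm(w,T)$, the two potentials telescope and one finds that $\psi(v,a)$ and $\psi(w,\ a+\bm(e))$ are exactly the endpoints of the edge $\{e;\ a - \bm(v,T)\}$ of $G^{\bm_T}$. Thus $\psi$ sends each edge fiber bijectively onto an edge fiber; the same computation for the reversed orientation of $e$ confirms consistency, and loops (where $v = w$) are covered by the identical formula and retain their gain. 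Running the argument verbatim for $\psi^{-1}$ shows it too is edge-preserving, so $\psi$ is a graph isomorphism. I would also observe that $\psi$ commutes with the translation action of $\mathbb Z^d$ on the fibers, so the isomorphism is in fact $\mathbb Z^d$-equivariant, which is the form needed for the rigidity application.

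The only real content is the telescoping computation in the third step; everything else is bookkeeping about fibers. I therefore expect no genuine obstacle. The single subtle point is to fix the fiber shift correctly: the translation must be by $-\bm(v,T)$ (the correct choice is forced, up to an overall additive constant, by requiring $f(w) + \bm(w,T) = f(v) + \bm(v,T)$ along every edge), which is precisely what makes the potential difference across each edge absorb the discrepancy between $\bm(e)$ and $\bm_T(e)$.
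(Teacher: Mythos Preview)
Your proposal is correct and takes essentially the same approach as the paper: the paper also defines the isomorphism by relabeling each fiber via $(v,z)\mapsto (v,\,z-\bm(v,T))$ and relabeling edges accordingly. Your write-up is more explicit in verifying the edge relation and in noting the $\mathbb Z^d$-equivariance, but the underlying idea is identical.
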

Note that this is a combinatorial rather than geometric result. 

\begin{proof}
This amounts to showing that there exists an appropriate relabeling of $G^{\bm}$. For each vertex $v$ of $G$, relabel the vertices $(v, z)$, $z \in \mathbb Z^d$ in the fiber over $v$ according to the rule $z \rightarrow z-c$, where $c$ is the net gain on the unique path from the root vertex $u$ to the vertex $v$. If $e$ is an edge originating at $v$, then we also change the indices of edges $(e, i)$ in the fiber over $e$ so that they agree with the relabeled indices of their initial points. This relabeling of vertices and edges defines an isomorphism $G^{\bm} \rightarrow G^{\bm_T}$.
\end{proof}

We say that the graphs $\pog$ and $\langle G, \bm_T \rangle$ are {\it $T$-gain related} and we write $\pog \sim \langle G, \bm_T \rangle$. More broadly, we say that $\pog$ and $ \langle G, \bm' \rangle$ are {\it $T$-gain equivalent} if $\pog \sim \langle G, \bm_T \rangle$ and $ \langle G, \bm' \rangle \sim \langle G, \bm_T \rangle$ for some choice of spanning tree $T$. In fact, if this is true for one spanning tree, it must be true for all choices of spanning tree, since the $T$-gain procedure preserves the net gains on cycles. $T$-gain equivalence can easily be shown to be an equivalence relation on the set of all gain assignments on a graph $G$. Theorem \ref{thm:TgainsPreserveRigidity} will demonstrate that $T$-gain equivalent graphs share the same generic rigidity properties. 

It is also possible to perform the $T$-gain procedure on subgraphs of $\pog$, by selecting a spanning tree for the subgraph and computing the $T$-gains on the edges of  $G$ accordingly. 

\begin{rem}
The $T$-gain procedure is not limited to the class of periodic graphs, but can be applied broadly  to any circumstance in which we are using an orbit graph. For example, the symmetric graphs discussed in the work of Schulze \cite{BS3,BSWWorbit} and others can be represented as gain graphs where the edges of the graph are labeled by elements of a symmetry group. The $T$-gain procedure will work in the same way in that case. 
\end{rem}

\subsection{The fixed $d$-torus, $\Tor^d$}
\label{sec:torus}

Let $\wt L$ be the $d \times d$ matrix whose rows are the linearly independent vectors $\{\bt_1, \dots, \bt_d\}$, $\bt_i \in \mathbb R^d$. Let $\wt L \mathbb Z^d$ denote the group generated by the rows of $\wt L$, viewed as translations of $\mathbb R^d$ (alternatively, we can think of this as the integer lattice, scaled by the rows of $\wt L$). We call $\wt L \mathbb Z$ the {\it fixed lattice}, and $\wt L$ is the {\it lattice matrix}. \index{lattice matrix} We call the quotient space $\mathbb R^d / \wt  L\mathbb Z^d$ the {\it  fixed $d$-torus} \index{fixed torus} generated by $\wt L$, and denote it $\Tor^d$. It follows that $a \equiv b$ in $\Tor^d$ if and only if $a-b = \sum_{i = 1}^d k_i\bt_i$, where $k_i \in \mathbb{Z}$.  %When $\wt L = I_{d \times d}$, the $d\times d$ identity matrix, we denote $\mathbb R^d /\wt  L\mathbb Z^d$ by $\Unit^d$, and call this the {\it fixed unit $d$-torus}. 

There is an equivalence class of sets of translations (equivalently, lattice matrices) which all generate the `same' torus, up to rotational orientation of the translation vectors at the origin. For any $d \times d$ matrix $\wt L$, there is a rotation matrix $R$ such that $R\wt L = L_0$, where $L_0$ is a lower triangular matrix.  $R$ is a $d \times d$ rotation matrix which rotates the parallelotope  generated by the rows of $\wt L$ ($d$-dimensional generalization of the parallelogram, see Coxeter \cite{RegularPolytopes}) such that $R\wt L = L_0$ is lower-triangular. %For example, in $3$-dimensions, this is the rotation that maps $t_1$ to a vector on the $x$-axis, and $t_2$ to a vector on the $xy$-plane. More generally, $R$ is the product of $d-1$ rotation matrices $R_1, \dots, R_{d-1}$, where $R_1$ rotates $t_1$ onto the $x$-axis, and each subsequent rotation fixes the placement of the previously rotated generators, until $R_{d-1}$ rotates $t_{d-1}$ about the $(d-2)$-dimensional subspace of $\mathbb R^d$ which fixes the placements of $t_1, \dots, t_{d-2}$.

We therefore assume, without loss of generality, that $\wt L$ is the lower triangular matrix $L_0$ 
$$L_0 = \left(\begin{array}{cccccc}t_{11} & 0  & 0 & \dots & 0 \\t_{12} & t_{22} & 0  & \dots & 0 \\ \vdots &  \vdots & \vdots & \dots & 0 \\t_{d1} & t_{d2} & t_{d3} &  \dots & t_{dd}\end{array}\right),$$
where $t_{\ell r} \in \mathbb R$ are the ${d+1 \choose 2}$ non-zero entries.

\begin{rem} It is possible to define a flexible lattice and flexible torus by simply allowing the entries of the lattice matrix to vary continuously with time. This is essentially the approach of Borcea and Streinu \cite{periodicFrameworksAndFlexibility}, and we will highlight extensions of the present work to the flexible torus setting where appropriate. Similarly,  it is possible to consider a range of partial flexibility by fixing some of the entries of the matrix and allowing others to vary. The `partially flexible $d$-torus' is treated in \cite{myThesis} and \cite{SymmetryPeriodic}, but will not be considered here. 

\end{rem}

\begin{rem}This representation of an abstract torus should not be confused with a realization of it. For example, we can realize the $2$-torus $\T^2$ in $\mathbb R^3$ as the familiar donut. This realization will change the metric properties of $\T^2$, due to the curvature of the surface in $\mathbb R^3$. However, $\T^2$ can also be realized in $\mathbb R^4$ in the following way:
\begin{eqnarray*}
\p: & \mathbb R^2 & \longrightarrow  \mathbb R^4\\
& (x, y) & \longrightarrow \frac{1}{2\pi}(\cos 2\pi x, \sin 2\pi x, \cos 2\pi y, \sin 2\pi y).
\end{eqnarray*} 
This is an isometric realization of $\T^2$ in $\mathbb R^4$, and it can be shown that this surface has zero Gaussian curvature everywhere, which explains why this realization is sometimes called the ``flat" torus. \index{flat torus} See \cite{doCarmo} or \cite{experiencingGeometry} for details. \qed \end{rem}

%% INFINITESIMAL RIGIDITY on a torus
\section{Periodic frameworks}
\label{sec:periodicDefs}

The work of Borcea and Streinu on periodic frameworks is closely related to what is presented here. We will note, where appropriate, the connections and terminology that appear in their paper \cite{periodicFrameworksAndFlexibility}. It should be emphasized however that the work of the present paper was completed independently, as reflected in a 2009 talk at the sectional AMS meeting in Worcester \cite{Worcester}. 

At a general level, the work of Borcea and Streinu treats periodic frameworks as infinite simple graphs with periodic structure. In contrast, the work described here is concerned with finite frameworks on a torus, which {\it correspond} to infinite periodic frameworks. Both approaches share some common features: basic counting on orbit frameworks, a similar rigidity matrix, and fundamental results linking rigidity and infinitesimal rigidity. The two perspectives diverge on {\it genericity}. In \cite{periodicFrameworksAndFlexibility}, the edge directions are assumed to be generic, while in the present work, the edge directions are partially determined by the topology of the graph on the torus (the gains). In other words, we view this as part of the combinatorial information we seek to characterize. Only the positions of the vertices on the torus are assumed to be generic, as in finite rigidity.

%%%%%Periodic Frameworks%%%%%%
\subsection{Periodic orbit frameworks on $\Tor^d$} 
\label{sec:periodicOrbitFrameworks}

%%%Periodic Orbit Frameworks

Let $\Tor^d$ be the fixed $d$-torus  generated by a $d \times d$ matrix $\wt L$ (where $\wt L$ is not necessarily lower-triangular).  A {\it $d$-periodic orbit framework} \index{periodic orbit framework} is a pair $\pofw$,  where $\pog$ is a gain graph with gain group $\mathbb Z^d$, and $\p$ is an assignment of a unique geometric position on the fixed $d$-torus $\Tor^d$ to each vertex in $V$. That is, $\p: V \longrightarrow \Tor^d \subset \mathbb R^d$, with $\p(v_i) \neq \p(v_j)$ for $i \neq j$. We denote the position of the vertex $v_i$ by $\p(v_i) = \p_i$, and call $\p$ a {\it configuration} of $\pog$.  The geometric image of the edge $e = \{v_i, v_j; \bm_e\}$, is denoted $\{\p_i, \p_j + \bm_e\}$, and will be called a {\it bar} of the framework. The geometric vertices $\p_1, \dots \p_m$ will be called the {\it joints}. We will also call a $d$-periodic orbit framework $\pofw$ simply a {\it framework on $\Tor^d$}. \index{framework on $\Tor^d$!{\it see} {periodic orbit framework}} When we wish to talk only about the combinatorial structure of a periodic framework, we will refer to the gain graph $\pog$ as a {\it $d$-periodic orbit graph}. \index{periodic orbit graph}  Where it is clear from context we omit the `$d$'. 

The periodic framework $\pofw$ determines the {\it  derived periodic framework} \index{derived periodic framework} described by the pair $\dpfww$. The graph $G^{\bm} = (V^{\bm}, E^{\bm})$ is determined as described in Section \ref{sec:gainGraphs}, with the vertices and edges indexed by the elements of the integer lattice: $V^{\bm} = V \times \mathbb Z^d$, and  $E^{\bm} = E \times \mathbb Z^d$. The configuration $\p^{\bm}: V^{\bm} \rightarrow \mathbb R^d$ is determined by the configuration $\p$. The vertex $(v, z) \in V^{\bm}$ where $v \in V$, $z \in \mathbb Z^d$, has the following position:  
$$\p^{\bm}(v, z) = \p(v) + z \wt L,$$
where $\wt L$ is the lattice matrix whose rows are the generators of $\Tor^d$.

Similarly, from the derived periodic framework $\dpfww$ we can define the periodic framework $(\tg, \p)$. Let $G = (V, E)$ be the graph of vertices and edges consisting of all the elements of $G^{\bm}$ whose indices are the zero vector. The gain assignment $\bm$ is determined by the edges $E^{\bm}$. If, for example, the edge $(e, 0)$ connects vertices $(v_1, 0)$ and $(v_2, z)$ in $G^{\bm}$, then the directed edge $\{v_1, v_2\} \in E$ has gain $z$.

%%%Periodic Frameworks
\subsection{$d$-periodic frameworks in $\mathbb R^d$}
\label{sec:periodicFrameworks}
In \cite{periodicFrameworksAndFlexibility}, Borcea and Streinu set out notation for the study of infinite graphs with periodic structure. They say that the pair $(\widetilde{G}, \Gamma)$ is a {\it $d$-periodic graph} \index{d-periodic graph@$d$-periodic graph} if $\widetilde{G}= (\widetilde V, \widetilde E)$ is a simple infinite graph with finite degree at every vertex, and $\Gamma \subset Aut(\widetilde{G})$ is a free abelian group of rank $d$, which acts without fixed points and has a finite number of vertex orbits. In other words, $\Gamma$ is isomorphic to $\mathbb Z^d$. 
%We make this isomorphism explicit by fixing a set of generators for $\Gamma$, say $\Gamma = \langle \gamma_1, \dots, \gamma_d \rangle$. For any element $\gamma \in \Gamma$, 
%\[\gamma = \sum_{i=1}^d z_i \gamma_i, \ \ \ z_i \in \mathbb Z.\]
%This induces a natural isomorphism between $\Gamma$ and $\mathbb Z^d$ which maps $\gamma$ to the vector of coefficients $z =(z_1, \dots,  z_d) \in \mathbb Z^d$. 

Let $(\widetilde{G}, \Gamma)$ be a $d$-periodic graph, with $\widetilde{G} = (\widetilde V, \widetilde E)$. Borcea and Streinu  define a {\it  periodic placement} \index{periodic placement} \index{d-periodic framework@$d$-periodic framework!periodic placement} of $(\widetilde{G}, \Gamma)$ to be the pair $(\widetilde{\p}, \pi)$ given by the functions 
\[\widetilde{\p}:\widetilde{V} \rightarrow \mathbb R^d \ \ \ \ \textrm{and} \ \ \ \ \pi: \Gamma \rightarrow  Trans(\mathbb R^d),\]
where $\widetilde{\p}$ assigns positions in $\mathbb R^d$ to each of the vertices of $\widetilde{G}$, and $\pi$ is an injective homomorphism of $\Gamma$ into the group of translations of $\mathbb R^d$, denoted $Trans(\mathbb R^d)$. The image $\pi(\gamma)$ has the form $\pi(\gamma)(x) = x+\gamma^*$, where $\gamma^* \in \mathbb R^d$ is a translation vector. The placement functions $\widetilde{\p}$ and $\pi$ must satisfy
\[\p(\gamma v) = \pi(\gamma) (\p(v)), \]
or equivalently, 
\begin{equation}
\widetilde{\p}(\gamma v) = \widetilde{\p}(v) + \gamma^*.
\label{eqn:placement}
\end{equation}
Together, a $d$-periodic graph $(\widetilde{G}, \Gamma)$ and its periodic placement $(\widetilde{\p}, \pi)$ define a {\it $d$-periodic framework}, \index{d-periodic framework@$d$-periodic framework} which is denoted $(\widetilde{G}, \Gamma, \widetilde{\p}, \pi)$ \cite{periodicFrameworksAndFlexibility}. 

%When the entries of the lattice are allowed to vary,  the variable entries of $L_0$ are the {\it lattice parameters}.

%%%%%% EQUIVALENCE of Repns
%\subsection{Equivalence of the representations}

In contrast to the periodic orbit framework, the periodic framework has a countably infinite number of vertices and edges. The key relationship between these two different objects is the following: \index{d-periodic framework@$d$-periodic framework!as periodic orbit framework|(}
\begin{thm}
A $d$-periodic framework $\BS$ has a representation as the derived periodic framework $\dpfww$ corresponding to the periodic orbit framework $\pofw$ on $\Tor^d = \mathbb R^d / \wt L \mathbb Z^d$. 
\label{thm:representation}
\end{thm}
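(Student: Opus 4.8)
The plan is to recognize this statement as the inverse of the derived-graph construction of Section~\ref{sec:gainGraphs}: given the infinite object $\BS$, I would produce the finite gain graph $\pog$ together with a configuration $\p$ and a lattice matrix $\wt L$ by quotienting out the $\Gamma$-action, and then check that passing back to the derived periodic framework $\dpfww$ recovers $\BS$ exactly. The whole argument is a ``translation dictionary,'' so the work lies in setting up the right bijections rather than in any single hard estimate.

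First I would build the base graph. Since $\Gamma \cong \mathbb Z^d$ acts freely on $\widetilde V$ with finitely many vertex orbits, and $\widetilde G$ has finite degree, there are also finitely many edge orbits; I set $G = (V,E)$ to be the quotient multigraph $\widetilde G / \Gamma$, which is finite and may carry loops and multiple edges. Fixing a generating set of $\Gamma$ gives an identification $\Gamma \cong \mathbb Z^d$. Next I choose, once and for all, a representative vertex $\wt v_i \in \widetilde V$ in each orbit. Because the action is free, every vertex of $\widetilde G$ is uniquely $\gamma \wt v_i$ for a unique orbit index $i$ and a unique $\gamma \in \Gamma$; writing $\gamma \leftrightarrow z \in \mathbb Z^d$ this yields a bijection $\widetilde V \cong V \times \mathbb Z^d$, $\gamma \wt v_i \leftrightarrow (v_i, z)$, which is exactly the vertex set of the derived graph and which intertwines the $\Gamma$-action with translation in the $\mathbb Z^d$ coordinate. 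To define the gains, for each edge orbit I take the representative edge incident to the chosen $\wt v_i$; if it joins $\wt v_i$ to $\gamma_e \wt v_j$, I set the gain of the corresponding directed edge $e = \{v_i,v_j;\bm_e\}$ of $G$ to be $\bm_e = z_e$, where $\gamma_e \leftrightarrow z_e$.

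Then I would check that the derived graph $G^{\bm}$ is $\Gamma$-isomorphic to $\widetilde G$. Under the vertex bijection, the edge $\{e;a\}$ of $G^{\bm}$ runs from $(v_i,a)$ to $(v_j,a+\bm_e)$, which corresponds to the edge joining $\gamma_a \wt v_i$ to $\gamma_a \gamma_e \wt v_j$; since $\Gamma$ is abelian this is precisely the $\Gamma$-translate $\gamma_a \wt e$ of the chosen representative, so the edge orbits match up and the map is an isomorphism commuting with the group actions. For the geometry, the injective homomorphism $\pi$ sends the chosen generators of $\Gamma$ to translations by vectors $\gamma_1^*,\dots,\gamma_d^*$; I let $\wt L$ be the matrix with these rows and set $\Tor^d = \mathbb R^d / \wt L \mathbb Z^d$. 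Defining $\p(v_i)$ to be the image of $\widetilde \p(\wt v_i)$ in $\Tor^d$ and lifting it back to $\widetilde \p(\wt v_i)$, the placement relation~(\ref{eqn:placement}) gives $\widetilde \p(\gamma \wt v_i) = \widetilde \p(\wt v_i) + \gamma^* = \p(v_i) + z\wt L = \p^{\bm}(v_i,z)$, so the configurations agree under the isomorphism.

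The part that requires genuine care, rather than bookkeeping, is the well-definedness and equivariance of the gain assignment: I must verify that the gains do not depend on the (arbitrary) choices of vertex and edge representatives beyond the edge-reversal convention of~(\ref{eqn:edge1})--(\ref{eqn:edge2}), and that loops (edges whose endpoints lie in a single orbit) receive consistent nonzero gains --- this is exactly the place where the translational data of $\Gamma$ is encoded combinatorially, and where the argument would break if the action were not free. A second subtlety is that $\pofw$ demands a genuine configuration, $\p(v_i) \neq \p(v_j)$ for $i \neq j$; this forces the mild non-degeneracy hypothesis that distinct vertex orbits occupy distinct points of $\Tor^d$ (equivalently, that $\widetilde \p$ does not identify two vertices modulo $\wt L \mathbb Z^d$), which I would state explicitly. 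Finally I need the rows of $\wt L$ to be linearly independent so that $\Tor^d$ is a genuine $d$-torus; this follows from injectivity of $\pi$ together with the rank-$d$ periodicity of $\Gamma$, and I would record it where the lattice matrix is introduced.
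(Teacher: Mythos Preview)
Your proposal is correct and follows essentially the same approach as the paper: quotient by $\Gamma$ to obtain the finite gain graph (the paper invokes Gross--Tucker's Theorem~2.2.2 for this step, which you effectively reprove), then read off the lattice matrix $\wt L$ from the images of the generators under $\pi$ and define $\p$ via chosen orbit representatives. The only cosmetic difference is that the paper fixes a canonical representative in each orbit---the unique one whose $A$-image lands in $[0,1)^d$---whereas you allow an arbitrary choice and then discuss well-definedness; both lead to the same conclusion.
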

The proof of this result consists of picking representatives from the vertex orbits of the periodic framework $(\widetilde{G}, \Gamma, \widetilde{\p}, \pi)$, and using them to define the periodic orbit framework $\pofw$. We will describe this construction in detail, beginning with the following result about the graph $\BSg$.

The quotient multigraph $\widetilde G/\Gamma$ is finite since both $\widetilde V/\Gamma$ and $\widetilde E/\Gamma$ are finite \cite{periodicFrameworksAndFlexibility}. Let $G = \widetilde G/\Gamma$, and let $q_{\Gamma}: \widetilde G \rightarrow G$ be the quotient map. Then $q_{\Gamma}$ identifies each vertex orbit in $\widetilde G$ with a single vertex in $G$, and similarly for edges. 

%%%% proof that every infinite per graph can be represented as a gain graph.
\begin{thm}[Theorem 2.2.2 in \cite{TopologicalGraphTheory}]
Let $(\widetilde{G}, \Gamma)$ be a $d$-periodic graph, and let $G$ be the resulting quotient graph by the action of $\Gamma$. Then there is an assignment $\bm$ of gains in $\mathbb Z^d$ to the edges of $G$ and a labeling of the vertices of $\widetilde{G}$ by the elements of $V_G \times \mathbb Z^d$, such that $\widetilde{G} = G^{\bm}$ and the action of $\Gamma$ on $\widetilde{G}$ is the natural action of $\mathbb Z^d$ on $G^{\bm}$. 
\label{thm:periodicGraphAsDerivedGraph}
\end{thm}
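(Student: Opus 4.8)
The plan is to realize $\widetilde G$ as a regular covering of the finite quotient $G = \widetilde G/\Gamma$ and to read off the gain assignment from a choice of orbit representatives; this is exactly the reconstruction of a regular covering from a gain (voltage) assignment. The freeness of the $\Gamma$-action (``without fixed points'') is what makes everything work, since it forces each vertex orbit, and each edge orbit, to be in bijection with $\Gamma$. So the whole argument is bookkeeping built on top of this one property, plus the fact that $\Gamma$ is abelian.

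First I would fix a group isomorphism $\Gamma \cong \mathbb Z^d$, written additively, and for each vertex $v$ of $G$ choose a representative $\widetilde v_0 \in \widetilde V$ lying in the orbit $q_\Gamma^{-1}(v)$. Because $\Gamma$ acts without fixed points, the orbit map $\gamma \mapsto \gamma \widetilde v_0$ is a bijection from $\Gamma$ onto $q_\Gamma^{-1}(v)$. Transporting this along $\Gamma \cong \mathbb Z^d$, I label the vertex $\gamma \widetilde v_0$ by the pair $(v, a)$, where $a \in \mathbb Z^d$ is the image of $\gamma$. Ranging over all $v \in V_G$, this produces a bijective labeling $\widetilde V \to V_G \times \mathbb Z^d = V^{\bm}$, and by construction each representative receives the label $(v, 0)$.

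Next I would define the gains. Assign to each edge of $G$ a plus-direction, say from $u$ to $v$. Since $\widetilde G \to G$ is a covering and the action is free, there is a unique edge $\widetilde e$ of $\widetilde G$ over $e$ whose initial vertex is the representative labeled $(u, 0)$; its terminal vertex carries some label $(v, b)$, and I set $\bm(e) = b$. (A loop of $G$ is handled identically, with $u = v$ and necessarily $b \neq 0$ by freeness together with the simplicity of $\widetilde G$.) With the gain so defined, the edge $\widetilde e$ runs from $(u,0)$ to $(v, \bm(e))$, which matches the derived-graph rule of Section~\ref{sec:gainGraphs}.

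The key verification, which I expect to be the main point rather than a genuine obstacle, is equivariance: I must show that the $\Gamma$-action, read through the labeling, is the natural translation action $a \cdot (v, b) = (v, a+b)$. For $\gamma \in \Gamma$ with image $a$, the vertex labeled $(v, b)$ is $\beta \widetilde v_0$ where $\beta$ has image $b$; then $\gamma(\beta \widetilde v_0) = (\gamma\beta)\widetilde v_0$, and since $\Gamma$ is abelian with $\gamma\beta$ mapping to $a+b$, this vertex is labeled $(v, a+b)$, as required. Applying $\gamma$ to the chosen edge $\widetilde e$ then yields an edge from $(u, a)$ to $(v, a + \bm(e))$; letting $a$ range over $\mathbb Z^d$ exhausts the orbit of $\widetilde e$, which is precisely the fiber $\{\,\{e; a\} : a \in \mathbb Z^d\,\}$ of $G^{\bm}$. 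Hence the labeling carries $\widetilde G$ onto $G^{\bm}$ bijectively and preserves incidence, and it intertwines the $\Gamma$-action with the natural $\mathbb Z^d$-action, which is the assertion. The only points to check with care are that the vertex labeling is well defined and bijective (both immediate from the free action) and that every edge of $\widetilde G$ is counted exactly once (using that $q_\Gamma$ identifies each edge orbit with a single edge of $G$).
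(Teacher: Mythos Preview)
Your argument is correct and is essentially the standard proof of the cited result from Gross and Tucker: choose orbit representatives, use freeness of the action to label fibers by $\mathbb Z^d$, read off gains from the unique lift of each edge at the chosen representative, and verify equivariance using commutativity of $\Gamma$. The paper itself does not supply an independent proof of this statement; it simply defers to Theorem~2.2.2 of \cite{TopologicalGraphTheory} (and to \cite{myThesis}), so your write-up in fact fills in precisely what the paper omits, and by the same method the reference uses.
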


\begin{proof} 
This follows directly from the proof of Theorem 2.2.2 in \cite{TopologicalGraphTheory}. See also \cite{myThesis}. 
\end{proof}

From Theorem \ref{thm:periodicGraphAsDerivedGraph}, we know that the $d$-periodic graph $(\widetilde{G}, \Gamma)$ can be described by the derived graph $G^{\bm}$ corresponding to a $d$-periodic orbit graph $\tg$, where $G = (V, E)$. We now show that there is also a correspondence between the periodic placement $(\widetilde{\p}, \pi)$ of $(\widetilde{G}, \Gamma)$  and the map $\p^{\bm}$ on $G^{\bm}$. 

Suppose that the generators of $\Gamma$ are given by $\{\gamma_1, \dots, \gamma_d\}$. Put 
\[\wt L = \left(\begin{array}{ccc} & \gamma_1^* &  \\ & \vdots &  \\ & \gamma_d^* & \end{array}\right), \textrm{\ where \ } \gamma_i^* \textrm{\ is determined by (\ref{eqn:placement}).}\]
Then $\wt L$ is the matrix whose rows are the translations of $\mathbb R^d$ that are the images under $\pi$ of the generators of $\Gamma$ (and again $\wt L$ is not necessarily lower-triangular). For $\gamma \in \Gamma$, let $z \in \mathbb Z^d$ be the row vector of coefficients of  $\gamma$ written as a linear combination of $\{\gamma_1, \dots, \gamma_d\}$. Then 
\begin{align*}
\widetilde{\p}(\gamma v) &  = \widetilde{\p}(v) + \gamma^*\\
							& = \widetilde{\p}(v) + z\wt L. 
\end{align*}

Let $A: \mathbb R^d \rightarrow \mathbb R^d$ be the linear transformation satisfying
 $$A(\gamma_i^*) = (\cdots ,0 , 1 , 0 , \cdots), $$
where the non-zero entry occurs in the $i$th column of the row vector. Then define 
\[A\wt L = \left(\begin{array}{ccc} & A\gamma_1^* &  \\ & \vdots &  \\ & A\gamma_d^* & \end{array}\right) = I_{d \times d}.\]
This permits us to write 
$$A\widetilde \p(\gamma v) = A \widetilde \p(v) + z \cdot A\wt L = A \widetilde \p(v) + z.$$

For each $v \in V$, there is exactly one vertex in $q_{\Gamma}^{-1}(v)$ (the orbit of $v$ in $\widetilde G$), whose image under $A\widetilde \p$ is in $[0,1)^d$. Label this vertex by $(v, 0)$, and label the other vertices in $q_{\Gamma}^{-1}(v)$ according to Theorem \ref{thm:periodicGraphAsDerivedGraph}. In addition, label the edges $e$ of $G$ by the same theorem, so that $\widetilde G = G^{\bm}$, the derived graph corresponding to $\tg$. 

To determine the map $\p^{\bm}: V^{\bm} \rightarrow \mathbb R^d$, for each $v \in V$, let $\p^{\bm}(v, 0) =  \widetilde \p(v, 0)$. For $a = (a_1, \dots, a_d) \in \mathbb Z^d$, let $\gamma_a = a_1\gamma_1 + \dots + a_d \gamma_d$. Now define 
\[	\p^{\bm}(v, a)  =  \widetilde \p (\gamma_a v).\]
Therefore, $A \p^{\bm}(v, a)=  A \widetilde \p(v) + a$, and applying the inverse linear transformation $A^{ -1}$, 
\[\p^{\bm}(v, a) = \widetilde \p(v) + a \wt L.\]
These observations complete the proof of Theorem \ref{thm:representation}.

%%SUMMARY
Table \ref{table:graphNotation} summarizes in chart form the different graphs and notations for periodic frameworks just described. Because every $d$-periodic framework $\BS$ has a representation as the derived framework $\dpfww$ corresponding to the periodic orbit framework $\pofw$ on $\Tor^d$ (by Theorem \ref{thm:representation}), we adopt the following simplification of notation for $d$-periodic frameworks.  Let $\BS$ be an arbitrary periodic framework. Let $\wt L$ be the matrix described above, 
\[\wt L = \left(\begin{array}{ccc} & \gamma_1^* &  \\ & \vdots &  \\ & \gamma_d^* & \end{array}\right), \textrm{\ where \ } \gamma_i^* \textrm{\ is determined by (\ref{eqn:placement}).}\]
Then (\ref{eqn:placement}) can be rewritten
\[\wt p(v_i, z) = \wt p(v_i, 0) + z \wt L.\]

%Let $R$ be the $d \times d$ rotation matrix which rotates the parallelotope  generated by the rows of $\wt L$ such that $R\wt L = L$ is lower-triangular. 
%Let $ R \wt p $ be the rotated periodic placement of the vertices of $\wt G$. 
%Then we denote this rotated $d$-periodic framework by $\pfwflr$. The infinite framework $(\wt G, R \wt p)$ is invariant with respect to the translations given by the rows of $L$. In this way, the $d$-periodic framework $\pfwflr$ is actually an equivalence class of frameworks $\BS$ up to rotation at the origin. Since every such framework has a representation as the derived graph of a periodic orbit framework $\pofw$ on $\T^d = \mathbb R^d / L \mathbb Z^d$, we let the vertices of $\wt G$ be indexed in the same manner as the vertices of $G^m$. See Table \ref{table:graphNotation}. That is, we write
%\begin{eqnarray*}
%R\wt p (v_i, z) 
%		& = & R(\wt p(v_i, 0)) + zR\wt L \\
%		& = & R(\wt p(v_i, 0)) + zL. 
%\end{eqnarray*}		

It is straightforward to show that:
\begin{prop}
A $d$-periodic framework $\BS$ is equivalent under rotation to a periodic framework $\pfwflr$ which is represented as the derived periodic framework $\dpfwfl$ corresponding to the periodic orbit framework $\pofw$ on $\Tor^d = \mathbb R^d / L_0 \mathbb Z^d$, where $L_0$ is lower triangular. 
\label{prop:rotationEquiv}
\end{prop}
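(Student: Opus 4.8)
The plan is to combine the representation result of Theorem~\ref{thm:representation} with the elementary linear-algebra fact, recorded in Section~\ref{sec:torus}, that any lattice matrix can be rotated into lower-triangular form. Theorem~\ref{thm:representation} already produces a representation of $\BS$ as a derived periodic framework over a (possibly non-triangular) lattice matrix $\wt L$ whose rows are the translation vectors $\gamma_i^*$, so all that remains is to rotate the entire configuration so that the lattice becomes lower triangular, and to verify that this rotation leaves the combinatorial and topological data untouched.

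First I would apply Theorem~\ref{thm:representation} to write $\BS$ as the derived periodic framework $\dpfww$ corresponding to the periodic orbit framework $\pofw$ on $\mathbb R^d / \wt L \mathbb Z^d$, where $\wt L$ has rows $\gamma_1^*, \dots, \gamma_d^*$. Next I would invoke the rotation matrix $R$ from Section~\ref{sec:torus} satisfying $R\wt L = L_0$ with $L_0$ lower triangular, and apply $R$ to every position, replacing $\wt \p$ by $R\wt \p$, each translation vector $\gamma^*$ by $R\gamma^*$, and correspondingly $\pi$ by $R\pi R^{-1}$. The periodicity relation~(\ref{eqn:placement}) transforms correctly under this substitution: applying the linear map $R$ to both sides of $\wt\p(\gamma v) = \wt\p(v) + \gamma^*$ yields $R\wt\p(\gamma v) = R\wt\p(v) + R\gamma^*$, so $(\widetilde G, \Gamma, R\wt\p, R\pi R^{-1})$ is again a periodic framework, now with translation vectors $R\gamma_i^*$, i.e.\ with lattice matrix $R\wt L = L_0$.

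I would then observe that $R$ carries $\wt L \mathbb Z^d$ onto $L_0 \mathbb Z^d$, since $R(\wt L\mathbb Z^d) = (R\wt L)\mathbb Z^d = L_0\mathbb Z^d$, so it descends to an isometry of tori $\mathbb R^d/\wt L\mathbb Z^d \to \Tor^d = \mathbb R^d/L_0\mathbb Z^d$; hence $R\p$ is a well-defined and still injective configuration on $\Tor^d$. The base graph $G$, the gain assignment $\bm$, and the fiber labeling supplied by Theorem~\ref{thm:periodicGraphAsDerivedGraph} depend only on the action of $\Gamma$ and the combinatorics of $\widetilde G$ (indeed the normalizing transformation used to pick fiber representatives merely composes with $R^{-1}$, leaving the fractional coordinates and hence the labeling unchanged), none of which is altered by a global rotation of positions. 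Consequently the rotated framework is represented as $( \langle G^{\bm}, L_0 \rangle, (R\p)^{\bm})$ corresponding to the \emph{same} periodic orbit graph $\pog$ carrying the rotated configuration $R\p$ on $\Tor^d$. Since $R$ is a rotation, and therefore a congruence of $\mathbb R^d$, the original and rotated frameworks are equivalent under rotation, which is the assertion of the proposition.

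The conceptual content is light, and I expect no serious obstacle; the delicate points are all bookkeeping. The main one is confirming that the rotation affects only item~(ii), the geometric positions, while leaving items~(i) and~(iii), the combinatorial and topological structure (including the gains), strictly invariant, so that the orbit graph $\pog$ is literally unchanged and only $\p$ is replaced by $R\p$. The secondary check, that $R$ descends to a well-defined map of the quotient tori, is immediate from $R(\wt L\mathbb Z^d) = L_0\mathbb Z^d$.
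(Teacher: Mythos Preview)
Your proposal is correct and is exactly the ``straightforward'' argument the paper has in mind: the paper does not give an explicit proof of this proposition, merely prefacing it with ``It is straightforward to show that,'' relying on Theorem~\ref{thm:representation} together with the rotation-to-lower-triangular observation of Section~\ref{sec:torus}. Your write-up supplies precisely this, including the check that the gain graph $\pog$ and the fiber labeling from Theorem~\ref{thm:periodicGraphAsDerivedGraph} are unchanged under the global rotation (via the observation that the normalizing map $A$ composes with $R^{-1}$, so $A'\circ R = A$), which is the only point requiring any care.
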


As a consequence of this result, we assume that the configuration $\wt p$ in all subsequent frameworks $\pfwf$ is the rotated placement, and that $\Tor^d = \mathbb R^d / L_0\mathbb Z^d$, where $L_0$ is lower triangular.  \index{d-periodic framework@$d$-periodic framework!as periodic orbit framework|)} 

In the next section, we will explore to what extent the representation of $d$-periodic frameworks as $d$-periodic orbit graphs is unique. In addition, before we can define rigidity for periodic orbit frameworks, we first need to define length in this setting. 

% $L_0$ has the lower triangular form (since for every $d \times d$ matrix there is a linear transformation which transforms it to this form):
%$$L_0 = \left(\begin{array}{cccccc}t_{11} & 0  & 0 & \dots & 0 \\t_{12} & t_{22} & 0  & \dots & 0 \\ \vdots &  \vdots & \vdots & \dots & 0 \\t_{d1} & t_{d2} & t_{d3} &  \dots & t_{dd}\end{array}\right).$$
%We now describe a {\it $d$-periodic framework}  to be the pair $\pfwf$ where the infinite framework $(\wt G, \wt \p)$ is invariant with respect to the translations given by the rows of $L_0$. We refer to $L_0$ as the {\it lattice matrix}. The lattice matrix generates a torus $\Tor^d = \mathbb R^d / L_0\mathbb Z^d$ (see Section \ref{sec:torus}). In other words, in making the translation from $\BS$ to $\pfwf$, we let $\Gamma = \mathbb Z^d$, and let $\pi:\mathbb Z^d$ take the $i$-th unit vector to the $i$-th row of $L_0$. 

\begin{verse}
\begin{center}
\begin{table}[h!]
\caption{Summary of notation for the different conceptions of periodic frameworks. \label{table:graphNotation}\index{periodic framework!notation}}
\begin{tabular}{|m{1.4cm}|m{3.8cm}|m{4.7cm}|m{3.5cm}|}
\hline
\begin{center} \it Graph \end{center} & \begin{center} \it Vertices \end{center}& \begin{center} \it Edges \end{center} & \begin{center} {\it Configuration} \end{center} \\
\hline

$$(\widetilde G, \Gamma)$$ $$G = \widetilde G / \Gamma$$ & $$\widetilde V, \ \  |\widetilde V| = \infty$$ $$ V = \wt V / \Gamma, \ \ |V|<\infty$$& $$\widetilde E, \ \  |\widetilde E| = \infty$$ $$E = \wt E / \Gamma, \ \ |E| < \infty$$ \center undirected edges & $$(\widetilde \p, \pi)$$ $$\widetilde \p: \widetilde V \rightarrow \mathbb R^d$$ $$\pi: \Gamma \rightarrow Trans (\mathbb R^d)$$\\
\hline
%$$(\widetilde G, L_0)$$ & $$\widetilde V, \ \  |\widetilde V| = \infty$$ & $$\widetilde E, \ \  |\widetilde E| = \infty$$ &  $$\widetilde \p: \widetilde V \rightarrow \mathbb R^d$$ \\
%\hline
%$$G = \widetilde G / \Gamma$$ & $$ V, \ \ |V|<\infty$$& $$E, \ \ |E| < \infty$$ \center undirected edges & $$-$$ \\
%\hline

$$\tg$$ & $$\langle V, \bm \rangle = V$$  & $$\bm: E \rightarrow \mathbb Z^d$$ $$\langle E, \bm \rangle =$$ $$ \big\{\{v, w; \bm_e\}: \{v, w\} \in E\big\}$$ $$\{v, w; \bm_e\} = \{w, v; -\bm_e\} $$ $$|\langle E, \bm \rangle| = |E|$$ \center directed, labeled edges & $$\p: V \rightarrow \Tor^d$$ $$\Tor^d = \mathbb R^d / L_0 \mathbb Z^d$$ \\
\hline
$$\langle G^{\bm}, L_0 \rangle$$ & $$V^{\bm} =$$ $$ \{(v, z) :  v \in V, z \in \mathbb Z^d\}$$ $$|V^{\bm}| = \infty$$ & $$E^{\bm} =$$ $$\big\{\{e, z\}: e \in E, z \in \mathbb Z^d\big\}$$ $$ \{e, z\} = \{(v,z), (w,z+\bm_e)\} $$ $$|E^{\bm}| = \infty$$ & $$\p^{\bm}: V^{\bm} \rightarrow \mathbb R^d$$ $$\p^{\bm}(v, z) = \p(v) + zL_0$$ \\
\hline
%$$\langle \widetilde G, L \rangle$$ & $$\widetilde V, \ \  |\widetilde V| = \infty$$ & $$\widetilde E, \ \  |\widetilde E| = \infty$$ & $$R \widetilde \p: \widetilde V \rightarrow \mathbb R^d$$ $$R\wt p(v, z) =$$$$ R\wt p(v,0) + z L$$\\
%\hline
\end{tabular}
\end{table}
\end{center}
\end{verse}

%%%%%
\subsection{Equivalence relations among $d$-periodic orbit frameworks}
\label{sec:equivRelations}

We now define notions of length and congruence for frameworks on $\Tor^d$, which leads to an equivalence relation among all $d$-periodic orbit graphs. Let $L_0$ be the lower triangular matrix whose rows are the translations $\{\bt_1, \dots, \bt_d\}$, where $\Tor^d = \mathbb R^d /  L_0 \mathbb Z^d$.

Given an edge $e = \{v_i, v_j; \bm_e\} \in E\pog$, we define the {\it  length} \index{periodic orbit framework!edge length} of the edge $e$ to be the Euclidean length of the vector $(\p_i - (\p_j + \bm_eL_0))$, denoted $\|\p_i - (\p_j + \bm_eL_0)\|$.  
%That is, 
%\begin{align*}
%\|e\| = & \sqrt{(p_{i1} - p_{j1} - [m_eL_0]_1)^2 + \cdots + (p_{id} - p_{jd} - [m_eL_0]_d)^2}\\
%		=& \|\p_i - (\p_j + \bm_eL_0)\|,
%		\end{align*}
%where $\p_i = (p_{i1}, \dots, p_{id})$, and $\bm_eL = ([m_eL_0]_1, \dots, [m_eL_0]_d)$.

More generally, for any pair of joints $p_i, p_j$ and any element $m_{ij} \in \mathbb Z^d$, we write $\|\{p_i, p_j; m_{ij}\}\|$ to denote the Euclidean distance of the vector $(\p_i - (\p_j + \bm_{ij}L_0))$. Note that this need not be the same as $\|\{p_j, p_i; m_{ij}\}\|$. That is, the order of the vertices matters. 
		
By definition, the edges of $\dpfwo$, \index{derived periodic framework!edge length} have the same lengths as the edges of $(\tg, \p)$. Let $e = \{v_i, v_j; \bm_e\}$ be an edge of $\tg$. The edge $(e, z) \in E^{\bm}$ connects the vertex $(v_i, z)$ to the vertex $(v_j, \bm_e + z)$. Hence 		\begin{align*}
\|(e, z)\| = &  \|(\p_i+zL_0) - (\p_j + \bm_eL_0+zL_0)\|\\
		=	& \|\p_i - (\p_j + \bm_eL_0)\|\\
		=	& \|e\|.
		\end{align*}
In other words, all edges in the fibre over $e$ have length $\|e\|$. 

Now let $L_0 = I_{d \times d}$ be the $d$-dimensional identity matrix, and consider $\Unit^d = [0,1)^{d|V|}$ to be the unit torus generated by $L_0$.
Let $\p_i =(p_{i1}, \dots, p_{id}) \in \mathbb R^d$. We write $\lfloor \p_i \rfloor$ to denote the vector $(\lfloor p_{i1} \rfloor, \dots, \lfloor p_{id} \rfloor)$, where $\lfloor x \rfloor, x \in \mathbb R$ is the {\it floor function}, defined to be the largest integer less than or equal to $x$. We say that the framework $(\tgn, \bq)$ is {\it  $\Unit^d$-congruent} to $(\tg, \p)$  if there exists a vector $t \in \mathbb R^d$ such that \index{periodic orbit framework!congruent}
\begin{enumerate}[(a)]
\item $\bq_i = (\p_i + t) - \lfloor \p_i+t \rfloor$  for each vertex $v_i \in V$, and
\item $\bn_e = \bm_e + (\lfloor \p_j + t \rfloor - \lfloor \p_i + t \rfloor)$.
\end{enumerate}
We write $ (\tgn, \bq) \cong \pofw$.

If $(\tg, \p)$ and $(\tg, \bq)$ are two periodic frameworks with the same underlying gain graph $\tg$, the description of congruence is more simple. In this case (b) is automatically satisfied, and (a) becomes simply $\bq_i = \p_i + t$, for all $v_i \in V$.

More generally, if $\Tor^d$ is the torus generated by the matrix $L_0$, then there is an affine transformation mapping $L_0$ to the $d \times d$ identity matrix. We say that  the framework $(\tgn, \bq)$ is {\it  $\Tor^d$-congruent} to $(\tg, \p)$  if their corresponding affine images on the unit torus are  $\Unit^d$-congruent. With these definitions in place, it is easy to confirm the following:

\begin{prop} $\Tor^d$-congruence is an equivalence relation on the set of all periodic frameworks. 
\label{prop:congEquiv}
\end{prop}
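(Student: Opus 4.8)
The plan is to reduce $\Tor^d$-congruence to $\Unit^d$-congruence and then verify the three axioms directly. Since $\Tor^d$ is generated by a single fixed matrix $L_0$, the affine transformation carrying $L_0$ to $I_{d\times d}$ is one fixed bijection applied uniformly to every framework on $\Tor^d$, and by definition two frameworks are $\Tor^d$-congruent precisely when their images under this bijection are $\Unit^d$-congruent. Because the pullback of an equivalence relation along a fixed bijection is again an equivalence relation, it suffices to prove that $\Unit^d$-congruence (the case $L_0 = I_{d\times d}$) is reflexive, symmetric, and transitive. Throughout I would use that the position vectors on $\Unit^d$ lie in $[0,1)^{d}$, so that $\lfloor \p_i \rfloor = 0$, together with the elementary identity $\lfloor x + n \rfloor = \lfloor x \rfloor + n$ for any integer vector $n$.

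For reflexivity I would take the translation $t = 0$: then condition (a) reads $\bq_i = \p_i - \lfloor \p_i \rfloor = \p_i$ and condition (b) reads $\bn_e = \bm_e + (\lfloor \p_j \rfloor - \lfloor \p_i \rfloor) = \bm_e$, so $\pofw$ is $\Unit^d$-congruent to itself. For symmetry, assuming $(\tgn, \bq)$ is $\Unit^d$-congruent to $\pofw$ via $t$, I would establish the reverse congruence via $t' = -t$. The key computation is that $\bq_i - t = \p_i - \lfloor \p_i + t \rfloor$, whence $\lfloor \bq_i - t\rfloor = - \lfloor \p_i + t\rfloor$; substituting this into conditions (a) and (b) with the roles of the two frameworks reversed returns exactly $\p_i$ and $\bm_e$, as required.

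For transitivity, suppose $(\tgn, \bq)$ is $\Unit^d$-congruent to $\pofw$ via $t_1$, and $(\tgs, \br)$ is $\Unit^d$-congruent to $(\tgn, \bq)$ via $t_2$. I would verify that $(\tgs, \br)$ is $\Unit^d$-congruent to $\pofw$ via the sum $t = t_1 + t_2$. The central step is the telescoping identity $\lfloor \bq_i + t_2 \rfloor = \lfloor \p_i + t \rfloor - \lfloor \p_i + t_1 \rfloor$, obtained from $\bq_i + t_2 = (\p_i + t) - \lfloor \p_i + t_1\rfloor$ and the floor identity above. Feeding this into condition (a) collapses the two floor corrections into a single $\lfloor \p_i + t\rfloor$, giving $\br_i = (\p_i + t) - \lfloor \p_i + t\rfloor$; feeding it into condition (b) makes the intermediate gain shifts involving $\lfloor \p_i + t_1\rfloor$ and $\lfloor \p_j + t_1\rfloor$ cancel, leaving $\bs_e = \bm_e + \lfloor \p_j + t\rfloor - \lfloor \p_i + t\rfloor$.

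The only real subtlety, and the part I would present most carefully, is the bookkeeping of the integer floor corrections in condition (b) and their compatibility with the translations appearing in (a). The content of the statement is that the gain relabelling forced when a vertex crosses the boundary of the fundamental domain $[0,1)^d$ composes additively under composition of translations; everything reduces to repeated use of $\lfloor x + n\rfloor = \lfloor x\rfloor + n$, but one must keep the vertex indices $i,j$ and the two translations $t_1, t_2$ straight so that the terms telescope correctly. Once reflexivity, symmetry, and transitivity of $\Unit^d$-congruence are in hand, the general $\Tor^d$ case follows at once from the reduction in the first paragraph.
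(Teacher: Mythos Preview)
Your proof is correct and is exactly the direct verification the paper has in mind; the paper does not actually write out a proof, stating only that ``it is easy to confirm'' the result from the definitions. Your reduction to $\Unit^d$-congruence via the fixed affine map, followed by checking reflexivity, symmetry, and transitivity using the integer-shift identity $\lfloor x+n\rfloor = \lfloor x\rfloor + n$, is the natural way to fill in this omitted routine argument, and all of the floor computations you sketch are accurate.
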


We say that the gain graphs $\tg$ and $\tgn$ are {\it  periodic equivalent} \index{periodic orbit graph!periodic equivalent} if there exist configurations $\p$ and $\bq$ such that the periodic frameworks $(\tg, \p)$ and $(\tgn, \bq)$ are $\Tor^d$-congruent. Following directly from Proposition \ref{prop:congEquiv}, we have:

\begin{prop}
Periodic equivalence is an equivalence relation on the set of all $d$-periodic orbit graphs. 
\end{prop}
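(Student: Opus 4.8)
The plan is to reduce the claim entirely to Proposition~\ref{prop:congEquiv}, which already establishes that $\Tor^d$-congruence is an equivalence relation on frameworks. Periodic equivalence of gain graphs is \emph{defined} by the existence of configurations making the associated frameworks $\Tor^d$-congruent, so the three relation axioms for gain graphs should follow by transporting the corresponding axioms for $\Tor^d$-congruence, with a small amount of care over the existential quantifiers on configurations.

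First I would verify reflexivity: given a gain graph $\tg$, I need configurations $\p$ and $\bq$ with $(\tg, \p) \cong (\tg, \bq)$. Taking $\p = \bq$ and the translation $t = 0$, conditions (a) and (b) of $\Unit^d$-congruence are satisfied (with $\bq_i = \p_i$, and (b) trivial since the gains agree), so $\tg$ is periodic equivalent to itself. Next, symmetry: if $\tg$ and $\tgn$ are periodic equivalent via configurations $\p, \bq$ with $(\tg, \p) \cong (\tgn, \bq)$, then by the symmetry of $\Tor^d$-congruence (Proposition~\ref{prop:congEquiv}) we have $(\tgn, \bq) \cong (\tg, \p)$, which witnesses that $\tgn$ and $\tg$ are periodic equivalent. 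For transitivity, suppose $\tg \sim \tgn$ and $\tgn \sim \tgs$ as periodic equivalent gain graphs. The subtlety is that the witnessing configurations need not be shared: the first equivalence gives $\p, \bq$ with $(\tg, \p) \cong (\tgn, \bq)$, while the second gives possibly \emph{different} configurations $\bq', \bs$ with $(\tgn, \bq') \cong (\tgs, \bs)$.

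This mismatch is the one genuine obstacle, and it is where I would spend the argument. The clean resolution is to first establish that for a fixed gain graph $\tgn$, any two configurations $\bq$ and $\bq'$ yield $\Tor^d$-congruent frameworks $(\tgn, \bq)$ and $(\tgn, \bq')$ --- or at least to observe that $\Tor^d$-congruence is the relevant notion on frameworks and chain through it. Concretely, I would argue that $(\tgn, \bq) \cong (\tgn, \bq')$ holds whenever the two frameworks are related by the simplified congruence at the end of Section~\ref{sec:equivRelations} (same underlying gain graph, so condition (b) is automatic and (a) reduces to a translation); if $\bq$ and $\bq'$ are not already translates this requires replacing $\bq'$ by a translate, which is permissible because we only need \emph{existence} of witnessing configurations. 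Then transitivity of $\Tor^d$-congruence (again Proposition~\ref{prop:congEquiv}) chains $(\tg, \p) \cong (\tgn, \bq) \cong (\tgn, \bq') \cong (\tgs, \bs)$ to produce $(\tg, \p) \cong (\tgs, \bs)$, exhibiting $\tg \sim \tgs$.

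In short, the proof is a routine lift of the three equivalence-relation properties from frameworks (Proposition~\ref{prop:congEquiv}) to gain graphs, and the only step requiring thought is managing the existentially-quantified configurations in the transitivity argument, which I expect to handle by the observation that all configurations of a fixed gain graph sit in a single $\Tor^d$-congruence class up to translation. I would keep the writeup brief, citing Proposition~\ref{prop:congEquiv} for each of the three axioms rather than re-deriving the congruence conditions.
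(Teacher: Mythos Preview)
Your overall plan matches the paper's: the paper simply asserts that the proposition ``follows directly from Proposition~\ref{prop:congEquiv}'' and gives no further argument, so your reflexivity and symmetry checks are exactly what is intended, and you go further than the paper by actually noticing that transitivity is not automatic because the two witnessing configurations on the middle gain graph $\tgn$ need not coincide.

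The gap is in your proposed fix. You assert that ``all configurations of a fixed gain graph sit in a single $\Tor^d$-congruence class up to translation,'' but this is false: for $(\tgn,\bq)\cong(\tgn,\bq')$ on the \emph{same} gain graph, condition~(b) forces the integer parts $\lfloor \bq_i+t\rfloor$ to be independent of $i$, and then condition~(a) forces $\bq'_i-\bq_i$ to be the constant $t$ (mod~$1$). Arbitrary $\bq,\bq'$ are not related this way, so you cannot splice $(\tgn,\bq)\cong(\tgn,\bq')$ into the chain. In fact the relation as literally defined is \emph{not} transitive: with $d=1$, two vertices, and a single edge, the gain graphs with edge gain $0$, $1$, $2$ satisfy $0\sim 1$ and $1\sim 2$ (take $\p_1=0.1$, $\p_2=0.9$, $t=0.5$ each time), but $0\not\sim 2$, since $\p_1^*,\p_2^*\in[0,1)$ forces $\lfloor \p_2^*+t^*\rfloor-\lfloor \p_1^*+t^*\rfloor\in\{-1,0,1\}$ for every $t^*$. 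The paper glosses over this; what is actually used downstream is that periodic-equivalent graphs have the same net gain on every cycle, and \emph{that} relation is manifestly transitive. A clean repair is either to pass to the transitive closure or to redefine periodic equivalence via the cycle-gain condition and check that a single $\Tor^d$-congruence implies it.
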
 

For two periodic equivalent graphs $\tg$ and $\tgn$, the net gain on any cycle is the same. \index{periodic orbit graph!net cycle gain}
For any vertex $v \in V$, let $\ell(v_i) = \lfloor \p_i+ t \rfloor$, where $\p$ is the configuration such that $(\tg, \p) \cong (\tgn, \bq)$ for some configuration $\bq$ of $\tgn$.  
Consider a cycle $C$ of edges in $G$. The net gain on $C$ in $\tg$ is 
\[\sum_{e \in C}\bm_e.\]
In the graph $\tgn$, the same cycle has gain 
\begin{align}
\sum_{e \in C} \bn_e & = \sum_{e \in C} (\bm_e + \ell(t(e)) - \ell(o(e))) \nonumber \\
 						& =  \sum_{e \in C} \bm_e + \sum_{e \in C}\ell(t(e)) - \sum_{e \in C}\ell(o(e)) 
\label{eqn:cycleSum}
\end{align}
where we denote the origin of the directed edge $e$ by $o(e)$, and the terminus by $t(e)$. Since $C$ is a cycle, each vertex appears exactly once as the origin of an edge, and exactly once as the terminus of another edge. Hence the last two sums in (\ref{eqn:cycleSum}) cancel, and we obtain
\[\sum_{e \in C}\bm_e = \sum_{e \in C}\bn_e.\]
The following proposition follows from these observations:

\begin{prop}
If $\tg$ and $\tgn$ are periodic equivalent, they have the same gain space: \index{periodic orbit graph!gain space}
\[\gs\pog = \gs\pogn.\]
\end{prop}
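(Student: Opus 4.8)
The plan is to reduce everything to the cycle-by-cycle computation carried out immediately before the statement. First I would recall the definition of the gain space: $\gs(G)$ is the $\mathbb{Z}$-span of the net gains on the cycles of $\C(G)$. The essential preliminary observation is that periodic equivalence leaves the base graph untouched — both $\tg$ and $\tgn$ are gain assignments on the same underlying multigraph $G$ — so they share one and the same cycle space $\C(G)$, and hence the same indexing family of cycles over which net gains are taken.

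Next I would invoke the calculation in (\ref{eqn:cycleSum}), which does the real work: for any cycle $C$ of $G$, the two floor-correction sums $\sum_{e \in C}\ell(t(e))$ and $\sum_{e \in C}\ell(o(e))$ cancel, since along a cycle each vertex occurs exactly once as an origin and once as a terminus. This yields $\sum_{e \in C}\bm_e = \sum_{e \in C}\bn_e$ for every cycle $C$; that is, the net-gain function on cycles is literally identical for $\bm$ and for $\bn$.

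The proposition then follows formally. The spanning set $\{\sum_{e\in C}\bm_e : C \text{ a cycle of } G\}$ of $\gs\pog$ coincides, member for member, with the spanning set $\{\sum_{e\in C}\bn_e : C \text{ a cycle of } G\}$ of $\gs\pogn$, and two $\mathbb{Z}$-modules generated by the same collection of vectors are equal. Hence $\gs\pog = \gs\pogn$.

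I do not expect a genuine obstacle here, since the substantive content is the cancellation already recorded in the text. The one point I would take care to note is that the net gain is well defined as a function on cycles, independent of the orientation used to traverse a given cycle: by Remark~\ref{rem:digraph}, reversing an edge negates its gain consistently, so the $\mathbb{Z}$-span of net gains is unaffected by these orientation choices. With that remark in place, the equality of the generating sets immediately gives equality of the spans, completing the argument.
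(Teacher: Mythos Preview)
Your proposal is correct and follows exactly the approach of the paper: the paper carries out the cycle computation (\ref{eqn:cycleSum}) in the text immediately preceding the proposition, then states that the proposition ``follows from these observations,'' which is precisely your argument spelled out in slightly more detail. There is nothing to add.
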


\begin{rem}
All of the definitions and results of the previous sections are also sensible for describing frameworks on a flexible torus. Since frameworks are defined at a particular moment in time, no changes are required to the definitions. Simply replace the matrix $L_0$ by a matrix of variables. 
\end{rem}

%%% RIGIDITY!
\section{Rigidity and infinitesimal rigidity on $\Tor^d$}
\label{sec:rigidity}
%%%%%%Periodic rigidity%%%%%%%%
\subsection{Rigidity on $\Tor^d$}
\label{sec:periodicRigidity}

 Let $\pofw$ be a periodic orbit framework with $m: E \rightarrow \mathbb Z^d$ and $\p:V \rightarrow \Tor^d = \mathbb R^d / L_0 \mathbb Z^d$, where $V = \{v_1, v_2, \dots, v_{n}\}$. A {\it motion} of the framework on $\Tor^d$ is an indexed family of functions $P_i: [0,1] \rightarrow \mathbb R^d$, $i = 1, \dots, |V|$ such that: \index{motion!of $\pofw$ on $\Tor^d$}
\begin{enumerate}
	\item $P_i(0) = p(v_i)$ for all $i$;
	\item $P_i(t)$ is continuous on $[0,1]$, for all $i$; 
	\item For all edges $e = \{v_i, v_j; \bm_e \} \in E\pog$, 
	\[\|P_i(t) - (P_j(t)+\bm_eL_0)\| = \|p(v_i) - (p(v_j) + \bm_eL_0) \|\] for all $t \in [0,1]$.
\end{enumerate}

In other words, a motion $P_i$ of a periodic orbit framework $\pofw$ \index{periodic orbit framework!motion on $\Tor^d$} preserves the distances between each pair of vertices connected by an edge. Let $M = \{-1, 0, 1\}$. Let $M^d$ represent the set of all $d$-tuples with entries from the set $M$. If a motion $P_i$ preserves all of the distances $\|\{p_i, p_j; m\}\|$, where $v_i, v_j \in V$, and $m \in M^d$, then we say that $P_i$ is a {\it rigid motion} or {\it trivial motion}. \index{motion!trivial} Note that there will be some duplication among this set of distances, for example, $\|\{p_i, p_j; m_{\alpha}\}\| = \|\{p_j, p_i; -m_{\alpha}\}\|$, which we could eliminate with further restrictions on $m$. 
\begin{prop}
Given any pair of vertices $v_i, v_j$,  a rigid motion preserves the length of the segment $\|\{p_i, p_j; m\}\|$ for all $m \in \mathbb Z^d$.
\end{prop}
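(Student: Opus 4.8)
The plan is to show that a rigid motion of $\pofw$ is, at every instant $t$, the restriction to the framework of a global translation of $\mathbb{R}^d$. Once this is established the conclusion is immediate, since a translation preserves all Euclidean distances, in particular $\|\p_i - (\p_j + \bm L_0)\|$ for every $\bm \in \mathbb{Z}^d$. Throughout I use that the motion lifts to the derived framework with the fixed lattice, so the copy $(v_k, z)$ sits at $P_k(t) + z L_0$, and that the quantity $\|\{\p_i, \p_j; \bm\}\|$ evaluated along the motion is exactly $\|P_i(t) - (P_j(t) + \bm L_0)\|$.

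First I would fix one vertex, say $v_1$, and consider the $d+1$ moving points $P_1(t)$ and $P_1(t) + \be_k L_0$ for $k = 1, \dots, d$, where $\be_1, \dots, \be_d$ are the standard basis vectors of $\mathbb{Z}^d$ (all of which lie in $M^d$). These are the positions of the derived vertices $(v_1, 0)$ and $(v_1, \be_k)$. Since $L_0$ is invertible, the vectors $\be_k L_0$ are linearly independent, so these $d+1$ points are affinely independent; moreover their pairwise distances are constant in $t$, because the index differences $\be_k - \be_\ell$ all lie in $M^d$ and are therefore preserved by the rigid motion (equivalently, the difference of two such positions is $(\be_k - \be_\ell) L_0$, independent of the configuration). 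Hence at each $t$ the time-$t$ positions are congruent to the time-$0$ positions, and there is a unique isometry $A_t$ of $\mathbb{R}^d$ carrying the latter to the former. Because $A_t$ fixes each vector $\be_k L_0$ and these span $\mathbb{R}^d$, its linear part is the identity, so $A_t$ is the translation by $\tau(t) := P_1(t) - \p_1$.

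Next I would pin down the motion of every other vertex. For each $v_j$, the distances from $(v_j, 0)$ to the reference points $(v_1, 0), (v_1, \be_1), \dots, (v_1, \be_d)$ are exactly the quantities $\|\{\p_j, \p_1; \bm\}\|$ for $\bm \in \{0, \be_1, \dots, \be_d\} \subset M^d$, and hence are all preserved by the rigid motion. Since a point of $\mathbb{R}^d$ is determined by its distances to an affinely independent set of $d+1$ points, it follows that $P_j(t) = A_t(\p_j) = \p_j + \tau(t)$. Thus every vertex is translated by the same vector $\tau(t)$, and the rigid motion is a global translation. Substituting into $\|P_i(t) - (P_j(t) + \bm L_0)\|$, the common translation $\tau(t)$ cancels and the quantity equals $\|\p_i - (\p_j + \bm L_0)\|$ for every $\bm \in \mathbb{Z}^d$, which is the claim.

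The main obstacle is precisely the passage from the hypothesis, which controls only offsets $\bm \in M^d$, to control over all $\bm \in \mathbb{Z}^d$. The idea that resolves it is to build the entire reference frame inside a single vertex orbit, using the copies of $v_1$ at offsets $0$ and $\be_k$, so that the offset between any base vertex $(v_j, 0)$ and every reference point is one of $0, \be_1, \dots, \be_d$, all inside $M^d$. This lets the small-offset hypothesis fix the full rigid motion, after which arbitrarily large offsets are handled for free. The only background facts required are standard: two point sets with equal pairwise distances are related by an isometry, and a point is determined by its distances to $d+1$ affinely independent points; no continuity of the motion is needed, as the argument is pointwise in $t$.
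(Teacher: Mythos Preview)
Your argument is correct. The paper states this proposition without proof, so there is nothing to compare against; your trilateration approach --- building a $(d+1)$-point affine frame from $(v_1,0)$ and $(v_1,\be_k)$, observing that the offsets needed to locate every other $(v_j,0)$ relative to this frame lie in $M^d$, and concluding that the motion is a translation at each instant --- is a clean and complete way to fill the gap.

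One small simplification: you do not really need the uniqueness-of-isometry step for the reference frame. The $d+1$ reference points at time $t$ are $P_1(t)$ and $P_1(t)+\be_kL_0$, so they are \emph{visibly} the translate by $\tau(t)=P_1(t)-\p_1$ of the time-$0$ reference points; no congruence argument is required there. The substantive step is exactly the one you emphasize: that the distances from $P_j(t)$ to these $d+1$ affinely independent points are preserved (since the relevant offsets $0,\be_1,\dots,\be_d$ lie in $M^d$), which pins $P_j(t)$ to $\p_j+\tau(t)$.
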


If the only motions of a framework $\pofw$ on $\Tor^d$ are rigid motions, then we say that the framework $\pofw$ is {\it rigid on the fixed torus $\Tor^d$}. \index{periodic orbit framework!rigid} 

%%\comm{Could put this in terms of $\pfwf$ instead of $\BS$}
%In Borcea and Streinu's description of a periodic framework $\BS$, a {\it motion} (in their words a one-parameter deformation) of the realization $(\widetilde \p, \pi)$ may be described as a differentiable family of realizations $(\widetilde \p_t, \pi_t)$, where $t \in [0,1]$, and $(\widetilde \p_0, \pi_t) = (\widetilde \p, \pi_0)$. Note that in general, the functions $\pi_t$ may also vary with $t$, which corresponds to a deformation of the lattice. In this chapter and its sequel, however, we are exclusively considering the case when $\pi_t$ does not deform, in other words, the lattice (and therefore torus) is fixed. 
%
%This description provides an immediate transfer from motions of the infinite periodic framework described by  $\BS$ to  motions of the associated periodic orbit framework $\pofw$ on the torus. Recall that $\widetilde \p_t$ maps the (countably) infinite vertices of $\widetilde G$ to points of $\mathbb R^d$. We can associate this to a map $\p_t$ on the vertices of the $d$-periodic orbit graph $\tg$ in the manner described in the proof of Proposition \ref{thm:representation}. Note that $\p_t$ maps the finite vertices of $G$ to the fixed $d$-torus, $\mathcal T_0^d$. 
%

%%%%%%Periodic Infinitesimal Motions%%%%%%%%
\subsection{Infinitesimal rigidity of frameworks on $\Tor^d$}
\label{sec:periodicInfinitesimalRigidity}

%Let $e = \{i,j; m\}$ be an edge of the periodic orbit framework $\pofw$.

 An {\it infinitesimal motion} \index{periodic orbit framework!infinitesimal motion on $\Tor^d$} of a periodic orbit framework $\pofw$ on $\Tor^d$ is an assignment of velocities to each of the vertices, $\bu: V \rightarrow \mathbb{R}^d$, with $u(v_i) = u_i$ such that 
 \begin{equation}
(\bu_i - \bu_j)\cdot(\p_i - \p_j-\bm_eL_0) = 0
\label{eqn:fixTorMot}
\end{equation}
for each edge $e = \{v_i, v_j;\bm_e\} \in E\pog$. Such an infinitesimal motion preserves the lengths of any of the bars of the framework (see Figure \ref{fig:motions}). 

A {\it trivial infinitesimal motion} of $\pofw$ on $\Tor^d$ is an infinitesimal motion that preserves the distance between all pairs of vertices:
\begin{equation}
(\bu_i - \bu_j)\cdot(\p_i - \p_j-\bm_eL_0) = 0
\label{eqn:fixTorTrivMot}
\end{equation}
for all triples  $\{v_i, v_j;\bm_e\}$, $\bm \in \mathbb Z^d$.  For any periodic orbit framework $\pofw$ on $\Tor^d$, there will always be a $d$-dimensional space of  trivial infinitesimal motions of the whole framework, namely the space of infinitesimal translations.  See Figure \ref{fig:translations}.
\begin{verse}
\begin{figure}
\begin{center}
\subfloat[$\pog$]{\label{fig:gainGraphMotions}\begin{tikzpicture}[auto, node distance=1.5cm]
\tikzstyle{vertex1}=[circle, draw, fill=couch, inner sep=1pt, minimum width=3pt, font=\footnotesize]; 
\tikzstyle{gain} = [fill=white, inner sep = 0pt,  font=\footnotesize, anchor=center];

\node[vertex1] (1) {$1$}; 
\node[vertex1] (2) [below right of=1] {$2$}; 
\node[vertex1] (3) [below left of=2] {$3$}; 
\node[vertex1] (4) [below left of=1] {$4$}; 

\path[thick] (1) edge (2) 
(2) edge (3) 
(3) edge   (1) 
(1) edge [bend right]  (4)
(4) edge [bend right]   (1)
(4) edge  (3); 

\pgfsetarrowsend{stealth}[ shorten >=2pt]
\path (1) edge  (2) 
(2) edge  (3) 
(3) edge node[gain] {$(1,0)$} (1) 
(1) edge [bend right] node[gain] {$(0,1)$} (4)
(4) edge [bend right]   (1)
(4) edge  (3); 
\pgfsetarrowsend{}
\end{tikzpicture}}\hspace{.5in}%
\subfloat[trivial]{\label{fig:translations}\includegraphics[width=1.2in]{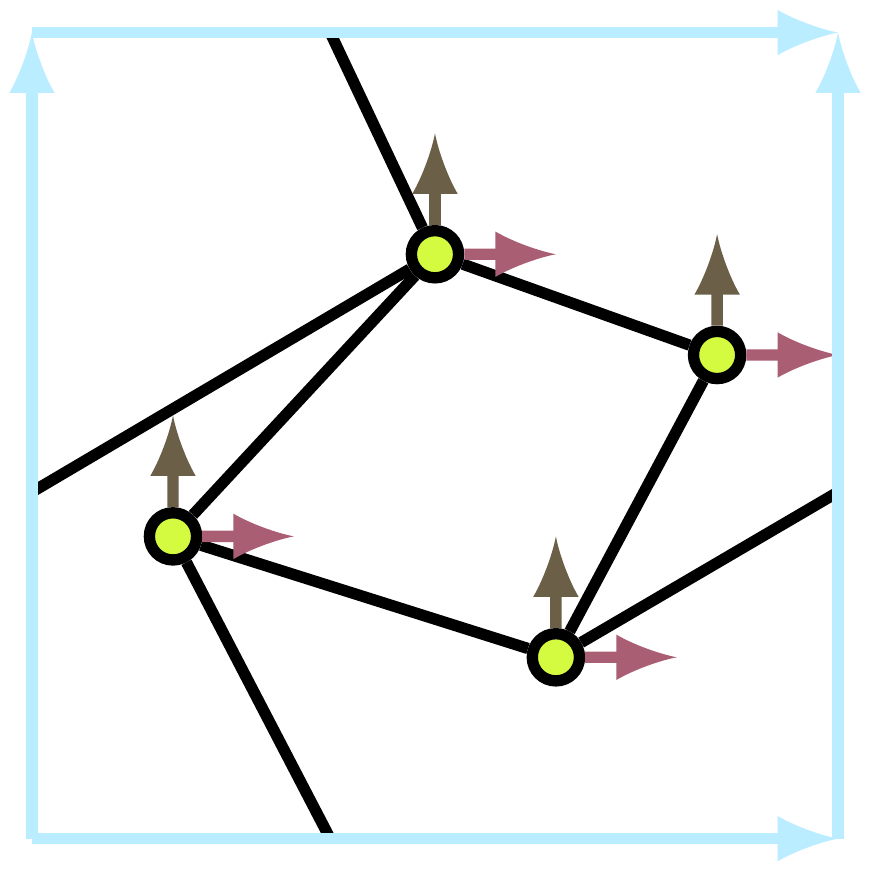}} \hspace{.5in}
\subfloat[non-trivial]{\label{fig:nontrivialMotion} \includegraphics[width=1.2in]{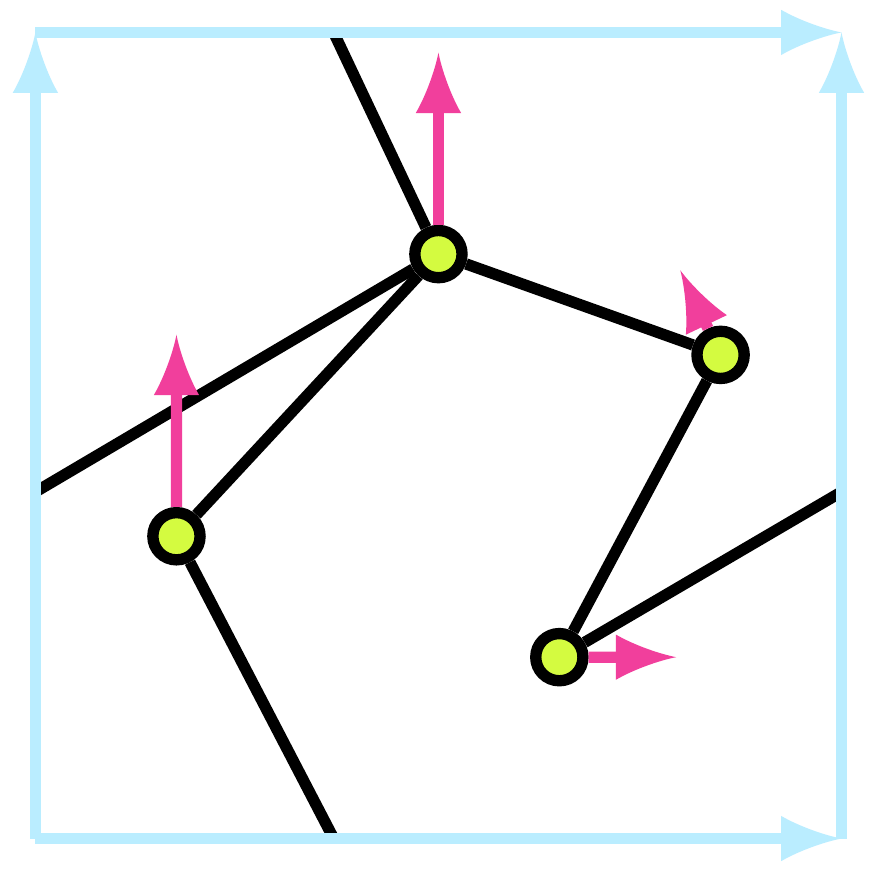}}
\caption{ A periodic orbit framework (a). Two trivial infinitesimal motions (translations) for a framework on $\Tor^2$ are indicated in (b). Removing a single edge produces a non-trivial infinitesimal motion on the modified framework pictured in (c). \label{fig:motions}}
\end{center}
\end{figure}\end{verse}

Rotation is not a trivial motion for periodic orbit frameworks thus defined, because a rotation of a graph on $\Tor^d$ will always change the distance between some pair of points. This is a consequence of the fact that we have fixed our representation of $\Tor^d$, and are considering motions of the periodic orbit framework relative to the fixed torus. 
%Note that rotation {\it is} a trivial motion for (infinite) periodic frameworks $(\widetilde{G}, \Gamma, \widetilde{\p}, \pi)$ in $\mathbb R^d$, but it is not a trivial periodic motion, because the infinitesimal velocity of any joint depends on its distance from the centre of rotation. Hence, rotation is {\it not} a trivial motion for periodic orbit frameworks on the torus. 
This is in contrast to the approach of Borcea and Streinu, who do view rotations as trivial infinitesimal motions of the infinite framework $\BS$ in $\mathbb R^d$. Recall that our frameworks on the torus are equivalence classes of the periodic frameworks $\BS$, where two such frameworks are equivalently represented by the orbit framework $\pofw$ if they are rotations of one another in $\mathbb R^d$. \index{periodic orbit framework!trivial infinitesimal motion}

\begin{prop}
If $u$ is a trivial infinitesimal motion of $\pofw$ on $\Tor^d$, then $u$ is an infinitesimal translation. 
\end{prop}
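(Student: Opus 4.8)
The plan is to show directly that a trivial infinitesimal motion assigns the \emph{same} velocity to every vertex, which is precisely the definition of an infinitesimal translation. I would emphasize at the outset that no appeal to connectivity of $G$ is needed: the defining condition (\ref{eqn:fixTorTrivMot}) already ranges over all pairs of vertices (and all lattice elements), not merely over the edges of $\pog$, so the argument is entirely local to an arbitrary pair.

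First I would fix an arbitrary pair of vertices $v_i, v_j \in V$ and abbreviate $w = \bu_i - \bu_j$. By (\ref{eqn:fixTorTrivMot}) the scalar equation
$$ w \cdot (\p_i - \p_j - m L_0) = 0 $$
holds for every $m \in \mathbb Z^d$. Taking $m = 0$ gives $w \cdot (\p_i - \p_j) = 0$, and subtracting this from the general equation leaves $w \cdot (m L_0) = 0$ for all $m \in \mathbb Z^d$. I would then specialize $m$ to the standard basis vectors $e_1, \dots, e_d$; since $e_k L_0$ is the $k$-th row $\bt_k$ of $L_0$, this yields $w \cdot \bt_k = 0$ for each $k = 1, \dots, d$. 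Because the rows $\bt_1, \dots, \bt_d$ are linearly independent (part of the definition of the lattice matrix in Section~\ref{sec:torus}), they span $\mathbb R^d$, and a vector orthogonal to a spanning set must vanish; hence $w = 0$, i.e. $\bu_i = \bu_j$.

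Since the pair $v_i, v_j$ was arbitrary, every vertex receives one common velocity vector $c = \bu_1 = \cdots = \bu_{n}$, and the assignment $\bu_k = c$ is exactly the infinitesimal version of the translation $P_k(t) = \p_k + t c$, so $u$ is an infinitesimal translation. I expect no real obstacle here: the single point worth flagging is that one must use the full range $m \in \mathbb Z^d$, rather than only the edge gains $\bm_e$, to obtain enough orthogonality constraints, and that the invertibility of $L_0$ (equivalently, the linear independence of its rows) is precisely what converts those constraints into $w = 0$.
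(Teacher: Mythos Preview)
Your proof is correct and follows essentially the same route as the paper's: both specialize the gain $m$ to the standard basis vectors and invoke the linear independence of the rows of $L_0$ to force $\bu_i-\bu_j=0$ for every pair. Your version is in fact slightly more explicit than the paper's terse appeal to ``elementary linear algebra'': by first taking $m=0$ and subtracting, you isolate the conditions $w\cdot \bt_k=0$ cleanly, so the conclusion follows directly from the invertibility of $L_0$ without any implicit assumption on the vectors $\p_i-\p_j-\bt_k$.
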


\begin{proof} 
Let $u = (u_1, \dots, u_{|V|})$ be an infinitesimal motion satisfying (\ref{eqn:fixTorTrivMot}) for all values of $m_e$ of the form $(0, \dots, 0, 1, 0, \dots, 0)$ (vectors of $0$'s with a single $1$ in the $i$-th place).  Elementary linear algebra demonstrates that the simultaneous solution of 
\[(\bu_i - \bu_j)\cdot(\p_i - \p_j-\bm_eL_0) = 0\]
for all such values of $m_e$ will yield the single solution, $u_1, = u_2 = \cdots = u_{|V|}$, which corresponds to an infinitesimal translation. 
\end{proof}

If the only infinitesimal motions of a framework $\pofw$ on $\Tor^d$ are trivial (i.e. infinitesimal translations), then it is {\it infinitesimally rigid}. Otherwise, the framework is {\it infinitesimally flexible}. \index{periodic orbit framework!infinitesimal rigidity on $\Tor^d$} \index{infinitesimal motion!of $\pofw$}

 An infinitesimal motion $u$ of $\pofw$ on $\Tor^d$ is called an {\it infinitesimal flex} if
 \begin{equation}
(\bu_i - \bu_j)\cdot(\p_i - \p_j-zL_0) \neq 0
\end{equation}
for some triple $\{v_i, v_j; z\}$, where $v_i, v_j \in V$, and $z \in \mathbb Z^d$. Note that we no longer require that the vertices of the framework affinely span $\mathbb R^d$, in contrast to the analogous definition for finite frameworks (see \cite{CountingFrameworks} for example). This is a consequence of the fact that we need only find {\it some} triple $\{v_i, v_j; z\}$ for which $(\bu_i - \bu_j)\cdot(\p_i - \p_j-zL_0) \neq 0$, and we are free to choose $z$ from $\mathbb Z^d$. 

\begin{ex}
\label{ex:4vertex}
The framework on $\Tor^2$ shown in Figure \ref{fig:translations} is infinitesimally rigid. The only infinitesimal motions of this framework are trivial, as indicated. 
Removing a single bar $(\p_3, \p_4; (0,0))$ from the orbit graph shown in (a) yields a framework with a non-trivial infinitesimal motion (a flex). Figure \ref{fig:nontrivialMotion} depicts this motion, which was found by solving the rigidity matrix described below. \qedhere
\end{ex}

%%%% DERIVED
\subsection{Infinitesimal rigidity of  $\pfwf$ in $\mathbb R^d$}

We now confirm that the representation of $\pfwf$ as an orbit framework on the torus provides us with the information we seek, namely the infinitesimal motions of $\pfwf$ in $\mathbb R^d$ that preserve its periodicity.

An infinitesimal periodic motion of $\pfwf$ in $\mathbb R^d$ is a function $\wt u: \wt V \rightarrow \mathbb R^d$ such that the infinitesimal velocity of every vertex of $\wt G$ in an equivalence class (under $\mathbb Z^d$) is identical. Recall that $\pfwf$ can be represented as $\dpfwo$, and therefore the vertices of $\wt G = G^m$ are naturally indexed by the elements of $\mathbb Z^d$. Then an {\it infinitesimal periodic  motion} of $\pfwf$ in $\mathbb R^d$ is a function $\wt u: \wt V \rightarrow  \mathbb R^d$ such that the following two conditions are satisfied: \index{periodic framework!infinitesimal per. motion}
\begin{enumerate}
	\item For every edge $e = \{(v_i, a), (v_j, b)\} \in \wt E$, 
	\[\big(\wt p(v_i, a) - \wt p(v_j, b) \big) \cdot \big (\wt u(v_i, a) - \wt u(v_j, b) \big) = 0, \]
	\item $\wt u(v_i, z) = \wt u(v_i, 0)$, for all $z \in \mathbb Z^d$.
\end{enumerate}
The framework $\pfwf$ is {\it  infinitesimally periodic rigid} in $\mathbb R^d$ if the only such motions assign the same infinitesimal velocity to all vertices of $\wt V$ (i.e. they are translations). \index{periodic framework!infinitesimal per. rigidity}

\begin{rem}
An infinitesimal motion of $\pfwf$ is a motion that is itself periodic in that $\wt u$ assigns the {\it same} infinitesimal velocity to every vertex in an equivalence class. It is possible to relax this assumption to consider infinitesimal motions that preserve the periodicity of the framework, but that are not themselves periodic, since they also change the lattice. These motions correspond to motions of the periodic orbit framework on the {\it flexible} torus. \qed
\end{rem}

\begin{prop}
Let $\pfwf$ be a $d$-periodic framework.  Let $\pofw$ be its $d$-periodic orbit framework given by Proposition \ref{thm:representation}. Then the following are equivalent:
\begin{enumerate}[(i)]
	\item $\pfwf$ is infinitesimally periodic rigid in $\mathbb R^d$
	\item $\pofw$ is infinitesimally rigid on $\Tor^d = \mathbb R^d / L_0 \mathbb Z^d$.	
\end{enumerate}
\label{prop:equivRepsFixed}
\end{prop}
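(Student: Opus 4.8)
The plan is to construct an explicit linear isomorphism between the space of infinitesimal periodic motions of $\pfwf$ and the space of infinitesimal motions of $\pofw$ on $\Tor^d$, and then to check that this isomorphism carries trivial motions to trivial motions. Since both notions of infinitesimal rigidity assert precisely that the only motions are the trivial ones, the equivalence of (i) and (ii) will follow immediately once the isomorphism and its effect on trivial motions are in hand.

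First I would use Theorem \ref{thm:representation} to represent $\pfwf$ as the derived periodic framework $\dpfwo$, so that $\wt V = V \times \mathbb Z^d$ and $\wt p(v_i, z) = \p_i + z L_0$. Given an infinitesimal periodic motion $\wt u$ of $\pfwf$, condition (2) of its definition forces $\wt u(v_i, z) = \wt u(v_i, 0)$ for all $z$, so I define $\bu : V \to \mathbb R^d$ by $\bu_i := \wt u(v_i, 0)$. Conversely, from any $\bu : V \to \mathbb R^d$ I define $\wt u(v_i, z) := \bu_i$, which satisfies (2) by construction. These two assignments are manifestly mutually inverse linear maps, so it remains only to match up the defining edge equations.

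The key step is to verify that condition (1) for $\wt u$ is equivalent to the orbit edge condition (\ref{eqn:fixTorMot}) for $\bu$. Every edge of $\wt G$ lies in the fibre over some edge $e = \{v_i, v_j; \bm_e\}$ of $\pog$, where the edge $(e, z) \in E^{\bm}$ joins $(v_i, z)$ to $(v_j, z + \bm_e)$. Using $\wt p(v_i, z) = \p_i + z L_0$, the displacement along $(e,z)$ is $\wt p(v_i, z) - \wt p(v_j, z + \bm_e) = \p_i - \p_j - \bm_e L_0$, while periodicity gives the velocity difference $\wt u(v_i, z) - \wt u(v_j, z + \bm_e) = \bu_i - \bu_j$. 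Hence condition (1) on $(e,z)$ reads $(\bu_i - \bu_j) \cdot (\p_i - \p_j - \bm_e L_0) = 0$, which is exactly (\ref{eqn:fixTorMot}) and, crucially, is independent of $z$. Thus a single orbit edge equation governs the entire fibre over $e$, and $\wt u$ is an infinitesimal periodic motion of $\pfwf$ if and only if $\bu$ is an infinitesimal motion of $\pofw$.

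Finally I would match the trivial motions. On the periodic side, trivial means $\wt u$ assigns one common velocity $t$ to every vertex of $\wt V$; under the isomorphism this corresponds to $\bu_1 = \cdots = \bu_{|V|} = t$, which is precisely an infinitesimal translation of $\pofw$, and hence trivial by the earlier characterization of trivial motions as translations. Since the isomorphism therefore carries the $d$-dimensional space of translations bijectively onto the $d$-dimensional space of translations, the full motion space of $\pfwf$ collapses to its trivial motions if and only if the full motion space of $\pofw$ does, giving (i) $\Leftrightarrow$ (ii). I do not expect a genuine obstacle here; the only point requiring care is the bookkeeping of the fibre structure of $\wt E$ and confirming that the $z$-dependence really does drop out, since that is exactly what makes the passage from the infinite framework to the finite orbit framework lossless.
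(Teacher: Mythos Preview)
Your proposal is correct and follows essentially the same approach as the paper's proof: both set up the bijection $\bu_i \leftrightarrow \wt u(v_i,0)$ between motions, observe that the edge equation on each fibre $(e,z)$ reduces to the single orbit edge equation~(\ref{eqn:fixTorMot}), and then note that translations correspond to translations. Your version is in fact more explicit than the paper's, which declares the edge-condition equivalence ``obvious'' without writing out the cancellation of the $zL_0$ terms that you carefully check.
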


\begin{proof}
%Let $\pofw$ be a periodic orbit framework on $\Tor^d$, and let $\dpfwo$ be its derived periodic framework as defined in Section \ref{sec:periodicOrbitFrameworks}. 
Let $u$ be an infinitesimal motion of $\pofw$ on $\Tor^d$. We extend $u$ to an infinitesimal motion $\wt u$ of $\pfwf = \dpfwo$ by letting every vertex of $\dpfwo$ in the fibre over $v \in V\pog$ have the same infinitesimal velocity. More precisely, let 
\[\wt u(v, z) = u(v), \ \forall z \in \mathbb Z^d.\] 
Since an edge  $ (e, a) = \{(v_i, a), (v_j, b)\} \in \wt E$ if and only if $e = \{v_i, v_j; b-a\} \in E\pog$, the fact that $\wt u$ is an infinitesimal periodic motion of $\pfwf$ is obvious. 

On the other hand, given an infinitesimal motion $\wt u$ of $\pfwf$, let $u:V \rightarrow \mathbb R^d$ be given by 
\[u(v_i) = \wt u(v_i, 0).\]
Again it is clear that $u$ is an infinitesimal motion of $\pofw$ on $\Tor^d$. 

In both cases, the non-trivial motions assign the same velocities to all vertices of $\pofw$ or $\pfwf$ respectively, and therefore non-trivial infinitesimal motions of $\pofw$ on $\Tor^d$ correspond to non-trivial infinitesimal periodic motions of $\pfwf$ in $\mathbb R^d$.%
\end{proof}

\begin{rem}
Proposition \ref{prop:equivRepsFixed} also holds when we replace ``infinitesimally rigid" with ``rigid". Because our focus is infinitesimal rigidity, we omit the statement and proof of this version. 
\qed \end{rem}

\begin{rem}
The reader should be reminded that an infinite framework $(\wt G, \wt p)$ may be infinitesimally periodic rigid without being infinitesimally rigid, since there may be non-trivial infinitesimal motions of the framework that do not preserve the periodicity. Hence it is important to distinguish between these forms of rigidity, and we emphasize that we are interested in forced periodicity, not incidental periodicity. \qed \end{rem}

If $\pofw$ is infinitesimally rigid on $\Tor^d$, then $\pofw$ is rigid on $\Tor^d$. Or, in other words, if a framework is flexible, then it also has an infinitesimal flex.  A periodic-adapted proof of this fact using the averaging technique is presented in Section \ref{sec:averaging}, after the definition of the rigidity matrix. The converse is not true,  as illustrated in the example pictured in Figure \ref{fig:infFlexNotFiniteFlex}. However, geometrically this example is highly `special'. It is known that for {\it generic} frameworks (defined in Section \ref{sec:genericFrameworks}), infinitesimal rigidity and rigidity actually coincide. This is a periodic analogue of a well-known result due to Asimow and Roth \cite{AsimowRoth} in the theory of rigidity for finite graphs.

\begin{verse}
\begin{figure}[htbp]
\begin{center}
\subfloat[$\pog$]{\begin{tikzpicture}[auto, node distance=2cm, thick]
\tikzstyle{vertex1}=[circle, draw, fill=couch, inner sep=1pt, minimum width=3pt, font=\footnotesize]; 
\tikzstyle{vertex2}=[circle, draw, fill=melon, inner sep=1pt, minimum width=3pt, font=\footnotesize]; 

\tikzstyle{gain} = [fill=white, inner sep = 0pt,  font=\footnotesize, anchor=center];

\node[vertex2] (1) {$1$}; 
\node[vertex1] (2) [below right of=1] {$2$}; 
%\node[vertex1] (3) [below left of=2] {$3$}; 
\node[vertex1] (4) [below left of=1] {$3$}; 

\path[thick] (1) edge [bend right] (2) 

(4) edge [bend right]   (1)
(4) edge (2); 

\pgfsetarrowsend{stealth}[ shorten >=2pt]
\path[thick] (1) edge [bend left] node[gain] {$(1,0)$} (2) 
(1) edge [bend right] node[gain] {$(-1,0)$} (4)
(4) edge [bend right] node[gain] {$(1,0)$}  (2);
\pgfsetarrowsend{}
\end{tikzpicture}}\hspace{1cm}%
\subfloat[$\pofw$]{\begin{tikzpicture}[thick,scale=1] 
\tikzstyle{vertex1}=[circle, draw, fill=couch, inner sep=0pt, minimum width=5pt]; 
\tikzstyle{vertex2}=[circle, draw, fill=melon, inner sep=0pt, minimum width=5pt]; 

	\pgftransformscale{1.5} 
	
		\draw[thick] (-1,0) -- (3, 0);
	
	\node[vertex2] (v1) at (0, .15){};
	\node[vertex1] (v3) at (-.5, 0){};
	\node[vertex1] (v2) at (0.5, 0) {};

	\node[vertex2] (v4) at (2, .15){};
	\node[vertex1] (v5) at (1.5, 0){};
	\node[vertex1] (v6) at (2.5, 0) {};

	\draw (v3) -- (v1) -- (v2) (v6) -- (v4) -- (v5) (v1) -- (v5) (v4) -- (v2); 
	\draw (v1) -- (-1, .05) (v4) -- (3, .05);
	
	\pgfsetarrowsend{latex} 
	%Motion
	\draw[pink, very thick] (v1) -- (0, 0.5);
	\draw[pink, very thick] (v4) -- (2, 0.5);
	\pgfsetarrowsend{}

	%Torus	
	\draw[sky, very thick] (-1, -1) -- (1, -1);
	\draw[sky, very thick] (-1, 1) -- (1, 1);
	\draw[sky, very thick] (-1, -1) -- (-1, 1);
	\draw[sky, very thick] (1, -1) -- (1, 1);
	\pgftransformxshift{2cm}
	\draw[sky, very thick] (-1, -1) -- (1, -1);
	\draw[sky, very thick] (-1, 1) -- (1, 1);
	\draw[sky, very thick] (-1, -1) -- (-1, 1);
	\draw[sky, very thick] (1, -1) -- (1, 1);

\end{tikzpicture} }

\caption{The framework $\pofw$ has a infinitesimal flex on $\Tor^2$ (b), but no finite flex. The position of the vertices of $\pofw$ has all three vertices on a line, however, the drawing has been exaggerated to indicate the connections between vertices in adjacent cells. }
\label{fig:infFlexNotFiniteFlex}
\end{center}
\end{figure}
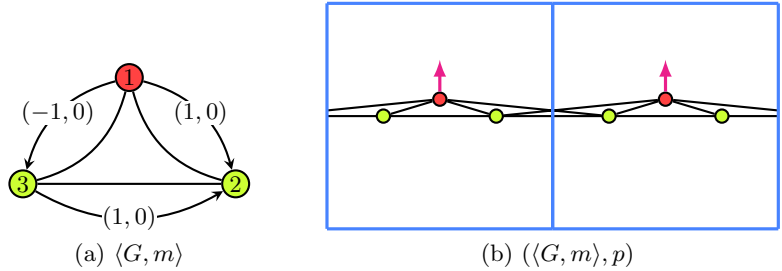\end{verse}

%%%%%The Rigidity Matrix and Basic Combinatorial Observations%%%%%%
\subsection{The fixed torus rigidity matrix}
\label{sec:theRigidityMatrix}

Rigidity matrices for periodic frameworks have been recorded by Guest and Hutchinson \cite{DeterminancyRepetitive}, Borcea and Streinu \cite{periodicFrameworksAndFlexibility}, and Malestein and Theran \cite{Theran}. The matrix we present below is different from these other presentations, for two reasons. The first is that this is the matrix for the fixed torus, and the second is that we are considering equivalence classes of frameworks under rotation. 

The {\it rigidity matrix}, $\R_0\pofw$,  \index{rigidity matrix!fixed torus} \index{rigidity matrix!periodic orbit framework} records equations for the space of possible infinitesimal motions of a $d$-periodic orbit framework. It is the $|E| \times d|V|$ matrix with one row of the matrix corresponding to each edge $e = \{i, j); m_e\}$ of $\pog$ as follows: 
\[  \renewcommand{\arraystretch}{1.2}
 \bordermatrix{ &   & i &   & j &    \cr
 & & & \vdots & & \cr
\textrm{edge } \{i, j; m_e\} & 0 \cdots 0 &\p_i - (\p_j + \bm_eL_0) & 0 \cdots 0 & (\p_j  + \bm_eL_0) - \p_i & 0 \cdots 0 \cr
 & & & \vdots & & \cr},\]
where each entry is actually a $d$ -dimensional vector, and the non-zero entries occur in the columns corresponding to vertices $v_i$ and $v_j$ respectively.
By construction, the kernel of this matrix will be the space of infinitesimal motions of $\pofw$ on $\Tor^d$. By an abuse of notation we may write $$\R_0\pofw \cdot \bu^T = 0$$ 
where $\bu = (\bu_1, \bu_2, \dots, \bu_{|V|})$, and $\bu_i \in \mathbb R^d$.  That is, $u$ is an infinitesimal motion of the joints of $\pofw$ on $\Tor^d$.

\begin{ex} Consider the periodic orbit graph $\pog$ shown in Figure \ref{fig:gainGraphMotions}. Let $L_0$ be the matrix generating the torus $\Tor^2$. The rigidity matrix $\R_0\pofw$ will have have six rows, and eight columns (two columns corresponding to the two coordinates of each vertex). 

\[ \bordermatrix{
%row one: names of the vertices
&  \p_1  & \p_2 & \p_3 & \p_4 \cr
%row two: names of the edges, matrix entries
\{1, 2; (0, 0)\} & \p_1 - \p_2 & \p_2 - \p_1 &   0 &   0 \cr
\{2, 3; (0, 0)\} &   0 & \p_2 - \p_3 & \p_3 - \p_2 &   0 \cr
\{3, 4; (0, 0)\} &   0 &   0 & \p_3 - \p_4 & \p_4 - \p_3 \cr
\{1, 4; (0, 0)\} & \p_1 - \p_4 &   0 &   0 & \p_4 - \p_1 \cr  
\{1, 3; (-1, 0)\} & \p_1 - \p_3 + (1, 0)L_0 &   0 & \p_3 - \p_1 - (1, 0)L_0&   0 \cr  
\{1, 4; (0, 1)\} & \p_1 - \p_4 - (0,1)L_0 &   0 &   0 & \p_4 - \p_1+(0, 1)L_0   
} \] 

\qed
\label{ex:rigMatrix}
\end{ex}

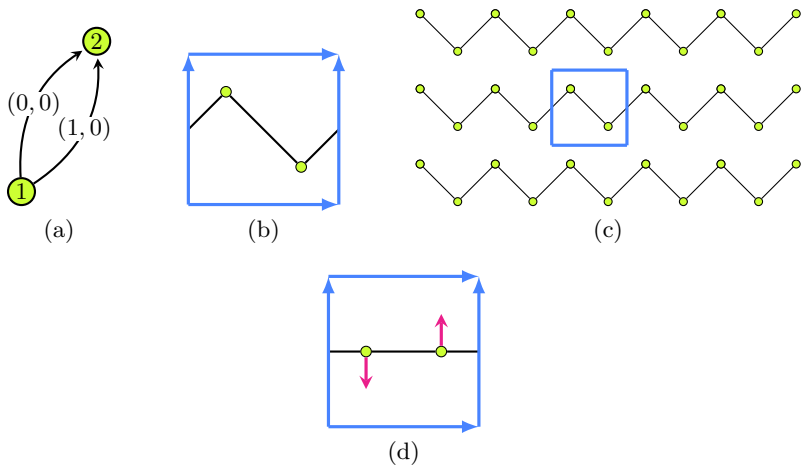
\begin{figure}
\begin{center}
\subfloat[]{\begin{tikzpicture}[->,>=stealth,shorten >=1pt,auto,node distance=2.8cm,thick, font=\footnotesize] 

\tikzstyle{vertex1}=[circle, draw, fill=couch, inner sep=1pt, minimum width=3.5pt]; 
\tikzstyle{vertex2}=[circle, draw, fill=lips, inner sep=1pt, minimum width=3.5pt]; 
\tikzstyle{vertex3}=[circle, draw, fill=melon, inner sep=1pt, minimum width=3.5pt]; 
\tikzstyle{vertex4}=[circle, draw, fill=bluey, inner sep=1pt, minimum width=3.51pt]; 
\tikzstyle{gain} = [fill=white, inner sep = 0pt,  font=\footnotesize, anchor=center];

\node[vertex1] (1) at (0, 0){$1$}; 
\node[vertex1] (2) at (1, 2) {$2$};

\path (1) edge [bend right] node[gain] {$(1,0)$} (2);
\path (1) edge [bend left] node[gain] {$(0,0)$} (2);

\end{tikzpicture}} \hspace{1cm}%
%
% Two vertices on the torus
\subfloat[]{\begin{tikzpicture} 
\tikzstyle{vertex1}=[circle, draw, fill=couch, inner sep=1pt, minimum width=4pt]; 
\tikzstyle{vertex2}=[circle, draw, fill=lips, inner sep=1pt, minimum width=4pt]; 
\tikzstyle{vertex3}=[circle, draw, fill=melon, inner sep=1pt, minimum width=4pt]; 
\tikzstyle{vertex4}=[circle, draw, fill=bluey, inner sep=1pt, minimum width=4pt]; 

\node[vertex1] (1) at (1,-1) {};
\node[vertex1] (2) at (0,0) {};

\draw[thick] (1) -- (2)  ;
\draw[thick] (1) -- (1.5, -0.5);
\draw[thick] (-0.5, -0.5) -- (2);

\pgfsetarrowsend{latex} 
	\draw[sky, very thick] (-.5, -1.5) -- (1.5, -1.5);
	\draw[sky, very thick] (-.5, .5) -- (1.5, .5);
	\draw[sky, very thick] (-.5, -1.5) -- (-.5, .5);
	\draw[sky, very thick] (1.5, -1.5) -- (1.5, .5);
\pgfsetarrowsend{} 
\end{tikzpicture}}\hspace{1cm}%
\subfloat[]{\begin{tikzpicture} 
\tikzstyle{vertex1}=[circle, draw, fill=couch, inner sep=1pt, minimum width=3pt]; 
\tikzstyle{vertex2}=[circle, draw, fill=lips, inner sep=1pt, minimum width=3pt]; 

\foreach \x in {-2, -1, 0,1,2} 
\foreach \y in { -1, 0,1} 
{ 
\node[vertex1] (1\x\y) at (\x+.5, \y-.5){};
} 

\foreach \x in {-2, -1, 0,1,2} 
\foreach \y in {-1, 0,1} 
{ 
\node[vertex1] (2\x\y) at (\x,\y) {}; 
} 
%Edges 
\draw \foreach \x in { -2,-1, 0,1, 2} 
\foreach \y in {-1, 0,1} {(1\x\y) -- (2\x\y)};

\draw \foreach \x in { -2,-1, 0,1,2} 
\foreach \y in {-1, 0,1} {(1\x\y) -- (\x+1, \y) };

Redrawing vertex 1 for prettiness
\foreach \x in {-2, -1, 0,1,2,3} 
\foreach \y in { -1, 0,1} 
{ 
\node[vertex1] (2\x\y) at (\x, \y) {};
} 

%\pgfsetarrowsend{latex} 
	\draw[sky, very thick] (-.25, -.75) -- (.75, -.75);
	\draw[sky, very thick] (-.25, .25) -- (.75, .25);
	\draw[sky, very thick] (-.25, -.75) -- (-.25, .25);
	\draw[sky, very thick] (.75, -.75) -- (.75, .25);
	%\pgfsetarrowsend{} 

\end{tikzpicture}}\hspace{1cm}%
\subfloat[]{\begin{tikzpicture} 
\tikzstyle{vertex1}=[circle, draw, fill=couch, inner sep=1pt, minimum width=4pt]; 
\tikzstyle{vertex2}=[circle, draw, fill=lips, inner sep=1pt, minimum width=4pt]; 
\tikzstyle{vertex3}=[circle, draw, fill=melon, inner sep=1pt, minimum width=4pt]; 
\tikzstyle{vertex4}=[circle, draw, fill=bluey, inner sep=1pt, minimum width=4pt]; 

\node[vertex1] (1) at (1,-.5) {};
\node[vertex1] (2) at (0,-.5) {};

\draw[thick] (1) -- (2)  ;
\draw[thick] (1) -- (1.5, -0.5);
\draw[thick] (-0.5, -0.5) -- (2);

\pgfsetarrowsend{stealth}
\draw[pink, very thick] (1) -- (1,0);
\draw[pink, very thick] (2) -- (0, -1);
\pgfsetarrowsend{}

\pgfsetarrowsend{latex} 
	\draw[sky, very thick] (-.5, -1.5) -- (1.5, -1.5);
	\draw[sky, very thick] (-.5, .5) -- (1.5, .5);
	\draw[sky, very thick] (-.5, -1.5) -- (-.5, .5);
	\draw[sky, very thick] (1.5, -1.5) -- (1.5, .5);
\pgfsetarrowsend{} 
\end{tikzpicture}}
\caption{The zig zag framework has a gain graph with two vertices (a). Realized as a framework on the $2$-dimensional torus (b). The derived framework is shown in (c). A non-generic position of the vertices on $\Tor^2$ (d). The framework pictured in (d) is not infinitesimally rigid, but the framework $\pofw$ shown in (b) is infinitesimally rigid on $\Tor^2$,  and the corresponding derived framework $\dpfwo$ (c) is infinitesimally rigid in $\mathbb R^2$.  \label{fig:zigzag} \index{zig-zag framework}}
\end{center}
\end{figure}

As stated, a framework on $\Tor^d$ is infinitesimally rigid if and only if the only infinitesimal motions of the framework are infinitesimal translations. In addition, any periodic framework $\pofw$ on $\Tor^d$ has a $d$-dimensional space of trivial motions. It follows that the rigidity matrix always has at least $d$ trivial solutions, and hence
\begin{thm}
A periodic orbit framework $\pofw$ is infinitesimally rigid on the fixed torus $\Tor^d$ if and only if the rigidity matrix $\R_0\pofw$ has rank $d|V|-d$. 
\label{thm:fixedMatrixRank}
\end{thm}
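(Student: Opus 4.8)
The plan is to read off the rank condition from the rank--nullity theorem, once the kernel of $\R_0\pofw$ is identified with the space of infinitesimal motions and the dimension of the trivial part is pinned down. First I would observe that, by the very construction of the rows of $\R_0\pofw$ from equation (\ref{eqn:fixTorMot}), a vector $\bu = (\bu_1, \dots, \bu_{|V|}) \in \mathbb R^{d|V|}$ lies in the kernel of $\R_0\pofw$ if and only if it satisfies $(\bu_i - \bu_j)\cdot(\p_i - \p_j - \bm_eL_0) = 0$ for every edge, i.e. if and only if $\bu$ is an infinitesimal motion of $\pofw$ on $\Tor^d$. Thus $\ker \R_0\pofw$ is exactly the space of infinitesimal motions, and rank--nullity gives $\dim \ker \R_0\pofw = d|V| - \rank \R_0\pofw$.

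Next I would establish that the trivial infinitesimal motions always form a $d$-dimensional subspace of this kernel. The containment is immediate: if $\bu_i = t$ for a fixed $t \in \mathbb R^d$ and all $i$, then $\bu_i - \bu_j = 0$, so (\ref{eqn:fixTorMot}) holds on every edge and every infinitesimal translation lies in $\ker \R_0\pofw$; the $d$ coordinate translations are linearly independent, so this subspace has dimension exactly $d$. Invoking the earlier proposition that every trivial infinitesimal motion of $\pofw$ on $\Tor^d$ is in fact an infinitesimal translation, we conclude that the trivial motions are precisely this $d$-dimensional subspace. In particular $\rank \R_0\pofw \leq d|V| - d$ for every periodic orbit framework.

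Finally I would assemble these pieces. By definition $\pofw$ is infinitesimally rigid on $\Tor^d$ exactly when its only infinitesimal motions are trivial, that is, when $\ker \R_0\pofw$ coincides with the $d$-dimensional translation space, equivalently when $\dim \ker \R_0\pofw = d$. Substituting $\dim \ker \R_0\pofw = d|V| - \rank \R_0\pofw$ yields that infinitesimal rigidity is equivalent to $\rank \R_0\pofw = d|V| - d$, as claimed. The argument has no genuine obstacle: the two substantive inputs are the identification of the kernel with the motion space (immediate from the matrix construction) and the already-established fact that the trivial motions are exactly the $d$-dimensional space of translations; the rest is the rank--nullity bookkeeping. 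The only point requiring care is ensuring the trivial subspace has dimension \emph{exactly} $d$ and not more, which is precisely what the cited proposition guarantees.
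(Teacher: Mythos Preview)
Your proposal is correct and follows essentially the same route as the paper. The paper does not isolate a formal proof; it simply notes just before the theorem that the kernel of $\R_0\pofw$ is by construction the space of infinitesimal motions, that the trivial motions (translations) always form a $d$-dimensional subspace, and that infinitesimal rigidity means these are the only motions --- exactly the rank--nullity argument you spell out in detail.
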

The rigidity matrix of the framework in Example \ref{ex:rigMatrix} above has rank 6, which is exactly $2|V|-2$, and hence $\pofw$ is infinitesimally rigid on $\Tor^2$. 

\begin{ex}[the {\it zig-zag} framework, Figure \ref{fig:zigzag}] \index{zig-zag framework}
Consider the graph $G= (V, E)$ where $V = \{v_1, v_2\}$ and $E$ consists of two copies of the edge connecting the two vertices $v_1$ and $v_2$ (Figure \ref{fig:zigzag}a). If the gains on the two edges are the same, then the framework is not infinitesimally rigid, since both rows of the rigidity matrix will be identical. Let $\bm$ be a gain assignment on $G$ with $\bm_1 \neq \bm_2$.  The periodic orbit framework $\pofw$ is infinitesimally rigid on $\Tor^2$ if and only if:
\begin{enumerate}
	\item $\p_1 \neq \p_2$ 
	\item both edges have distinct directions (that is, the vectors $\p_1 - \p_2 - \bm_1$ and $\p_1 - \p_2 - \bm_2$ are independent). See Figure \ref{fig:zigzag}d. 
\end{enumerate}
Figures \ref{fig:zigzag}b and \ref{fig:zigzag}c depict $\pofw$ on  $\Tor^2$ and $\dpfwo$ in $\mathbb R^2$ respectively.
\qed
\end{ex}

There are a number of simple observations which we record here for future reference.  
\begin{cor} 
A periodic orbit framework $\pofw$ where $G$ has $|E| < d|V|-d$ is not infinitesimally rigid on $\Tor^d$. 
\end{cor}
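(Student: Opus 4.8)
The plan is to derive this directly from Theorem \ref{thm:fixedMatrixRank} via an elementary rank bound, since the corollary is essentially a restatement of the rank criterion in the regime where there are too few edges. First I would recall the structure of the rigidity matrix $\R_0\pofw$: by construction it has exactly one row for each edge $e \in E\pog$ and $d|V|$ columns (a block of $d$ columns per vertex), so it is an $|E| \times d|V|$ matrix.

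The key observation is the trivial upper bound on the rank of any matrix by its number of rows:
\[
\rank \R_0\pofw \le |E|.
\]
Under the hypothesis $|E| < d|V| - d$, this immediately gives $\rank \R_0\pofw \le |E| < d|V| - d$, so in particular $\rank \R_0\pofw \ne d|V| - d$. Applying the equivalence in Theorem \ref{thm:fixedMatrixRank}, which asserts that $\pofw$ is infinitesimally rigid on $\Tor^d$ if and only if $\rank \R_0\pofw = d|V| - d$, we conclude that $\pofw$ fails to be infinitesimally rigid on $\Tor^d$.

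There is no real obstacle here: the entire content is the counting of rows of $\R_0\pofw$ together with the exact rank threshold supplied by Theorem \ref{thm:fixedMatrixRank}. The only point worth stating carefully is that the bound uses the number of rows (edges), not columns, and that the strict inequality $|E| < d|V|-d$ rules out attaining the required rank $d|V|-d$ regardless of the positions $\p$ or the gain assignment $\bm$. If one wished to phrase it kinematically rather than via rank, one could instead note that the solution space of $\R_0\pofw \cdot \bu^T = 0$ has dimension $d|V| - \rank \R_0\pofw \ge d|V| - |E| > d$, so the space of infinitesimal motions strictly exceeds the $d$-dimensional space of translations, forcing a nontrivial flex; but the rank argument is the most economical route and is what I would present.
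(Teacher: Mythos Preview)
Your argument is correct and is exactly the intended one: the paper states this corollary without proof, as an immediate consequence of Theorem~\ref{thm:fixedMatrixRank} via the row bound $\rank \R_0\pofw \le |E| < d|V|-d$. There is nothing to add.
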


A collection of edges $E' \subset E$ of the periodic orbit framework $\pofw$ is called {\it independent} \index{periodic orbit framework!independent on $\Tor^d$} if the corresponding rows of the rigidity matrix are linearly independent. For each set of multiple edges $e_{i_1} = e_{i_2} = \cdots = e_{i_t}$ in $E$, we can have at most $d$ independent copies. If a framework $\pofw$ has edges corresponding to dependent rows in the rigidity matrix, we say that the edges are {\it dependent}. We may also refer to a framework $\pofw$ as being independent or dependent, and for clarity we will at times write {\it dependent on $\Tor^d$} to differentiate this setting from the finite case (frameworks which are not on a torus). 

\begin{cor}
Any periodic orbit framework $\pofw$ where $G$ has $|E| > d|V|-d$ is dependent on $\Tor^d$.
\label{cor:tooManyEdges}
\end{cor}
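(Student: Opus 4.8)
The plan is to read this off directly from the rank bound that is already in hand from the discussion surrounding Theorem~\ref{thm:fixedMatrixRank}. Recall that $\R_0\pofw$ is an $|E| \times d|V|$ matrix with exactly one row per edge of $\pog$, and that by definition the edges being \emph{dependent on $\Tor^d$} means precisely that these $|E|$ rows are linearly dependent. So the entire task is to exhibit a linear dependence among the rows, and I would do this purely by counting against an a priori upper bound on the rank.

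The one fact I would invoke is that every periodic orbit framework on $\Tor^d$ carries a $d$-dimensional space of trivial infinitesimal motions, namely the infinitesimal translations, and that all of these lie in the kernel of $\R_0\pofw$ (this is exactly the observation preceding Theorem~\ref{thm:fixedMatrixRank}). Consequently $\dim \ker \R_0\pofw \geq d$, and by the rank-nullity theorem applied to the linear map $\R_0\pofw : \mathbb R^{d|V|} \to \mathbb R^{|E|}$ we obtain the upper bound $\rank \R_0\pofw \leq d|V| - d$.

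The remaining step is elementary linear algebra: the $|E|$ rows of $\R_0\pofw$ span its row space, which has dimension $\rank \R_0\pofw$. A family of $|E|$ vectors spanning a space of dimension strictly less than $|E|$ cannot be linearly independent. Since by hypothesis $|E| > d|V| - d \geq \rank \R_0\pofw$, the rows span a space of dimension strictly smaller than their number, and hence they are linearly dependent. Therefore $\pofw$ is dependent on $\Tor^d$.

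I do not expect any genuine obstacle here, since the whole argument rests on the bound $\rank \R_0\pofw \leq d|V|-d$, which is precisely the input already used for Theorem~\ref{thm:fixedMatrixRank}. The only point requiring care is to phrase the conclusion in terms of row dependence, matching the paper's definition of \emph{dependent}, rather than in terms of a failure of infinitesimal rigidity: a framework with too many edges is automatically dependent, but this says nothing by itself about whether it is rigid or flexible.
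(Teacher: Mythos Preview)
Your argument is correct and is exactly the approach the paper intends: the corollary is stated without explicit proof as an immediate consequence of the rank bound $\rank \R_0\pofw \leq d|V|-d$ established just before Theorem~\ref{thm:fixedMatrixRank}, and your rank--nullity/row-count reasoning is the natural way to spell this out.
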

We sometimes call such a framework {\it over-counted}. \index{over-counted} A periodic orbit framework $\pofw$ whose underlying gain graph satisfies $|E| = d|V|-d$ and is infinitesimally rigid on $\Tor^d$ will be called {\it minimally rigid}. \index{minimally rigid} \index{periodic orbit framework!minimally rigid} In other words, a minimally  rigid framework on $\Tor^d$ is one that is both infinitesimally rigid and independent. In fact, such a framework is maximally independent -- adding any new edge will introduce a dependence among the edges. If a periodic orbit framework is minimally rigid, then the removal of any edge will result in a framework that is not infinitesimally rigid.

We observe a periodic analogue of the extension of Maxwell's rule, which provides simple combinatorial necessary conditions for rigidity.
\begin{cor}
Let $\pofw$ be a minimally rigid periodic orbit framework. Then 
\begin{enumerate}
	\item $|E| = d|V| - d$, and 
	\item for all subgraphs $G' \subseteq G$, $|E'| \leq d|V'| - d$. 
\end{enumerate}
\label{cor:perMaxwell}
\end{cor}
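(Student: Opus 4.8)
The plan is to read part (1) straight off the definition and to obtain the sparsity bound in part (2) from the independence of the rigidity matrix together with the fact, recorded just before Theorem \ref{thm:fixedMatrixRank}, that every periodic orbit framework on $\Tor^d$ carries a $d$-dimensional space of trivial (translational) infinitesimal motions. Part (1) requires no work: by definition a minimally rigid framework is one whose underlying gain graph satisfies $|E| = d|V| - d$, so the equality is immediate.

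For part (2), I would fix an arbitrary subgraph $G' = (V', E') \subseteq G$ and form the sub-orbit-framework $\pofwpr$, where $\bm'$ and $\p'$ are the restrictions of $\bm$ and $\p$ to $E'$ and $V'$. The first key step is a structural observation about $\R_0\pofw$: the row indexed by an edge $e = \{i,j;\bm_e\}$ has nonzero entries only in the $d$-column blocks labelled $i$ and $j$. Since a subgraph contains only edges both of whose endpoints lie in $V'$, deleting all columns outside $V'$ and all rows outside $E'$ yields \emph{exactly} the rigidity matrix $\R_0\pofwpr$ of the subframework.

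The second step combines two rank estimates for this submatrix. On one hand, $\pofw$ is minimally rigid, hence independent, so the full set of rows of $\R_0\pofw$ is linearly independent; the rows indexed by $E'$ form a subset of an independent set and are therefore independent, giving $\rank \R_0\pofwpr = |E'|$. On the other hand, the constant assignments $u_i = t$ for $t \in \mathbb R^d$ and $i \in V'$ lie in the kernel of $\R_0\pofwpr$, because each edge constraint (\ref{eqn:fixTorMot}) depends only on the difference $\bu_i - \bu_j$; these furnish a $d$-dimensional space of trivial motions, so $\rank \R_0\pofwpr \le d|V'| - d$. Chaining the two estimates gives $|E'| \le d|V'| - d$, as required.

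The point I expect to need the most care is the degenerate boundary behavior of the identification in the first step. In particular, a loop (an edge from a vertex to itself) contributes the identically zero row, since its constraint reads $(\bu_i - \bu_i)\cdot(\,\cdot\,) = 0$; consequently independence already forces $G$, and hence every $G'$, to be loopless, which is exactly what keeps the bound meaningful when $|V'| = 1$, where it reads $|E'| \le 0$. I would also emphasize that the argument uses no special property of the configuration beyond independence of the full framework: the translational kernel is present for \emph{every} placement on $\Tor^d$ and for every vertex set, which is precisely what makes these inequalities purely combinatorial necessary conditions in the spirit of Maxwell's rule.
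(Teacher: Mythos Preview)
Your argument is correct and is essentially the same as the paper's (implicit) proof: the paper records this corollary as an immediate observation following Corollary~\ref{cor:tooManyEdges}, and your proof simply unpacks that corollary---independence is inherited by the rows indexed by $E'$, while the $d$-dimensional translational kernel forces $\rank \R_0\pofwpr \le d|V'|-d$---rather than citing it. The additional care you take with loops and the $|V'|=1$ case is not in the paper but is a welcome sanity check.
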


\begin{cor}
Any loop edge in the $d$-periodic orbit framework $\pofw$ is dependent on $\Tor^d$. 
\label{cor:loopsAreDependent}
\end{cor}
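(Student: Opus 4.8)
The plan is to show directly that the row of the rigidity matrix $\R_0\pofw$ corresponding to a loop edge is the zero vector, after which dependence is immediate by definition.

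First I would recall the general shape of a row of $\R_0\pofw$ from Section~\ref{sec:theRigidityMatrix}. For an edge $e = \{v_i, v_j; \bm_e\}$, the only nonzero entries are the $d$-vector $\p_i - (\p_j + \bm_e L_0)$ placed in the block of columns indexed by $v_i$, together with its negative $(\p_j + \bm_e L_0) - \p_i$ placed in the block of columns indexed by $v_j$; every other entry is zero. Next I would specialise to a loop, that is, an edge $e = \{v_i, v_i; \bm_e\}$ with $i = j$. The crucial observation — and the one point that needs care — is that when $i=j$ both nonzero $d$-vectors land in the \emph{same} block of columns, namely the block for $v_i$, so they add rather than occupying separate blocks. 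Their sum is
\[
\big(\p_i - (\p_i + \bm_e L_0)\big) + \big((\p_i + \bm_e L_0) - \p_i\big) = -\bm_e L_0 + \bm_e L_0 = 0,
\]
and since every remaining entry of the row is zero by construction, the entire row vanishes.

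A zero row is linearly dependent, so the loop edge is dependent on $\Tor^d$, as claimed. Equivalently, one may read this off the infinitesimal-motion equation~(\ref{eqn:fixTorMot}): for a loop the constraint becomes $(\bu_i - \bu_i)\cdot(\p_i - \p_i - \bm_e L_0) = 0$, which holds vacuously for every assignment of velocities, so the loop imposes no condition on infinitesimal motions — consistent with its row being zero. I expect no genuine obstacle here; the only subtlety worth flagging explicitly is that the two endpoints of a loop coincide, so the two contributions that would otherwise sit in distinct column blocks instead overlap and cancel.
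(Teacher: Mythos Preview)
Your argument is correct: the row of $\R_0\pofw$ corresponding to a loop is identically zero because the two $d$-vector contributions land in the same column block and cancel, and a zero row is trivially dependent. The paper itself gives no explicit proof, but the placement of the statement as a corollary immediately following Corollary~\ref{cor:tooManyEdges} and Corollary~\ref{cor:perMaxwell} indicates the intended argument is the counting one: a single loop is a subgraph with $|V'|=1$ and $|E'|=1$, and $1 > d\cdot 1 - d = 0$, so the edge set is over-counted and hence dependent by Corollary~\ref{cor:tooManyEdges}. Your route is slightly more informative---it shows the row is not merely dependent but actually zero, which also explains the later remark that on the \emph{flexible} torus loops can be independent (there the extra lattice columns receive a nonzero contribution $-\bm_e$). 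The counting argument, on the other hand, requires no inspection of the matrix at all and makes clear why the result deserves the label ``Corollary.''
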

The following useful result is a direct consequence of the fact that the row rank of a matrix is equal to its column rank. \index{periodic orbit framework!loops}		
\begin{cor}		
A $d$-periodic framework $\pofw$ whose underlying gain graph satisfies $|E| = d|V|-d$ is independent on $\Tor^d$ if and only if it is infinitesimally rigid on $\Tor^d$. Moreover, the vector space of non-trivial infinitesimal motions of $\pofw$ is isomorphic to the vector space of row dependencies of $\R_0\pofw$. 
\label{cor:rowRankColumnRank}
\end{cor}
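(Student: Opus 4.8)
The plan is to reduce the entire statement to a single rank computation, and then read off both halves from the hypothesis $|E| = d|V| - d$ together with the equality of row rank and column rank of $\R_0\pofw$. So before doing anything geometric I would name the relevant dimensions. The matrix $\R_0\pofw$ is $|E| \times d|V|$, so its column rank equals its row rank; call this common value $r = \rank \R_0\pofw$. By rank-nullity, the space of infinitesimal motions $\ker \R_0\pofw$ has dimension $d|V| - r$, and it always contains the $d$-dimensional space of infinitesimal translations (using the earlier proposition that every trivial infinitesimal motion is a translation, together with the linear independence of the $d$ coordinate translations). Independence of the framework means exactly that the $|E|$ rows are linearly independent, i.e. $r = |E|$.

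With this in hand the biconditional is immediate. Under the hypothesis $|E| = d|V| - d$, independence reads $r = |E| = d|V| - d$, while by Theorem \ref{thm:fixedMatrixRank} infinitesimal rigidity on $\Tor^d$ reads $r = d|V| - d$. These are literally the same condition on $r$, so independence and infinitesimal rigidity coincide.

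For the isomorphism I would simply compare dimensions. The space of row dependencies is the left null space of $\R_0\pofw$, of dimension $|E| - r$. The space of non-trivial infinitesimal motions is the quotient of $\ker \R_0\pofw$ by the $d$-dimensional translation subspace (equivalently, any chosen complement), of dimension $(d|V| - r) - d$. Substituting $|E| = d|V| - d$ shows both dimensions equal $d|V| - d - r$; since finite-dimensional real vector spaces of equal dimension are isomorphic, the claim follows.

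I expect the only genuine subtlety — the \emph{main obstacle}, such as it is — to be bookkeeping about the trivial motions rather than anything deep: one must know that the translation subspace is \emph{exactly} $d$-dimensional (it is no smaller, since the coordinate translations are independent, and it is not to be conflated with any non-trivial flex), and one must fix the convention that ``the vector space of non-trivial infinitesimal motions'' denotes the quotient $\ker \R_0\pofw$ modulo translations. Once that convention is pinned down, every remaining step is pure linear algebra driven by the identity row rank $=$ column rank, so no further estimates or geometric input are needed.
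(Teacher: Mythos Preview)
Your proposal is correct and follows exactly the route the paper indicates: the paper does not give a detailed proof but simply states that the corollary is ``a direct consequence of the fact that the row rank of a matrix is equal to its column rank,'' and your argument unpacks precisely that, using Theorem~\ref{thm:fixedMatrixRank} for the first part and a dimension count (with the $d$-dimensional translation space) for the isomorphism.
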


We also now confirm that if $\pofw$ is infinitesimally rigid, then all frameworks that are $\Tor^d$-congruent to $\pofw$ are also infinitesimally rigid. The proof is a straightforward application of the definition of $\Tor^d$-congruence.
\begin{prop}
Let $\pofw$ and $(\pogn, q)$ be $\Tor^d$-congruent. Then 
\[\rank \R_0\pofw = \rank \R_0(\pogn, q).\]
\end{prop}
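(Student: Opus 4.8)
The plan is to prove something stronger than equality of ranks: that the two rigidity matrices are in fact \emph{identical} row by row, from which equal rank is immediate. The key observation is that each edge $e = \{v_i, v_j; \bm_e\}$ contributes to its row of $\R_0$ only through the single \emph{edge vector} $\p_i - \p_j - \bm_e L_0$, which appears (with a fixed sign pattern) in the blocks for $v_i$ and $v_j$ and zero elsewhere. Since $\pofw$ and $(\pogn, q)$ share the same base graph $G$, there is a canonical bijection between their rows (edges) and columns (vertices), so it suffices to check that $\Tor^d$-congruence leaves every edge vector unchanged.

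First I would reduce to the unit torus. By the definition of $\Tor^d$-congruence, the affine map $x \mapsto x L_0^{-1}$ carries $\pofw$ and $(\pogn, q)$ to frameworks $(\langle G, \bm \rangle, \p L_0^{-1})$ and $(\langle G, \bn \rangle, q L_0^{-1})$ on $\Unit^d$ that are $\Unit^d$-congruent; note the gains are unchanged by this map, since a lattice vector $\bm_e L_0$ is sent to $\bm_e \in \mathbb Z^d$. Next I would unwind the definition of $\Unit^d$-congruence: there is a translation $t \in \mathbb R^d$ and, writing $\ell_i := \lfloor \p_i L_0^{-1} + t \rfloor \in \mathbb Z^d$, conditions (a) and (b) give $q_i L_0^{-1} = \p_i L_0^{-1} + t - \ell_i$ and $\bn_e = \bm_e + \ell_j - \ell_i$ for each edge $e = \{v_i, v_j; \bm_e\}$.

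The computation is then the heart of the argument. Multiplying the first relation on the right by $L_0$ yields $q_i = \p_i + (t - \ell_i)L_0$, and substituting this into the edge vector of $(\pogn, q)$ gives
\[ q_i - q_j - \bn_e L_0 = \big(\p_i - \p_j + (\ell_j - \ell_i)L_0\big) - \big(\bm_e + \ell_j - \ell_i\big)L_0 = \p_i - \p_j - \bm_e L_0, \]
so the translation $t$ drops out of the vertex difference and the floor-function corrections $\ell_j - \ell_i$ cancel exactly against the correction to the gain. This is the same cancellation already exploited for net cycle gains in (\ref{eqn:cycleSum}). Since every edge vector of $(\pogn, q)$ equals the corresponding edge vector of $\pofw$, the two rigidity matrices coincide, and in particular $\rank \R_0\pofw = \rank \R_0(\pogn, q)$.

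I expect no serious obstacle here; the only care needed is the bookkeeping with $L_0$ and $L_0^{-1}$ when passing to and from the unit torus, and confirming that the gains really are invariant under the affine identification so that conditions (a) and (b) may be applied verbatim. If one prefers to avoid even this much, the entire argument can be carried out directly on $\Tor^d$ by using the translation-plus-floor description of congruence in conditions (a)--(b) with $L_0$ in place, at the cost of slightly heavier notation.
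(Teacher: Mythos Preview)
Your proof is correct and is precisely the ``straightforward application of the definition of $\Tor^d$-congruence'' that the paper alludes to in lieu of a written proof. In fact you establish the stronger statement that the two rigidity matrices are row-for-row identical under the natural bijection of edges, which is exactly the computation the definition is set up to make work; the paper's one-line remark is simply a pointer to this calculation.
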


The rows of $\R_0\pofw$ corresponding to edges with zero gains can be viewed as rows in the rigidity matrix of a finite framework, as described in any introduction to rigidity; see \cite{CountingFrameworks} or \cite{SomeMatroids}, for example. Since at most $d|V| - {d+1 \choose 2}$ rows can be independent in the finite matrix, we have the following proposition:
\begin{prop}
Let $\pog$ be a $d$-periodic orbit graph with all edges having zero gains,  $\bm = 0$. If $|E| > d|V| - {d+1 \choose 2}$, then the edges of $\pofw$ are dependent for any configuration $\p$. 
\label{prop:zeroGains}
\end{prop}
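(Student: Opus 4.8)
The plan is to exploit the hypothesis $\bm = 0$ to identify $\R_0\pofw$ with an ordinary finite rigidity matrix, and then apply the classical rank bound for finite frameworks in $\mathbb R^d$ that is quoted immediately before the statement.

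First I would specialize a typical row of $\R_0\pofw$ to the case $\bm_e = 0$. For an edge $e = \{i,j;0\}$ the row degenerates to $(\ldots, \p_i - \p_j, \ldots, \p_j - \p_i, \ldots)$, with the two $d$-dimensional blocks sitting in the columns of $v_i$ and $v_j$. This is precisely the row associated with the bar $\{i,j\}$ in the rigidity matrix $R(G,\p)$ of the finite framework obtained by forgetting $\Tor^d$ and regarding $\p_1, \dots, \p_{|V|}$ simply as points of $\mathbb R^d$. Repeating this for every edge gives $\R_0\pofw = R(G,\p)$ on the nose (zero-gain loops contribute zero rows and may be ignored), so the two matrices have identical rank and identical row dependencies.

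Next I would invoke the standard fact for finite frameworks: the right kernel of $R(G,\p)$ always contains the space of infinitesimal Euclidean isometries $\bu_i = S\p_i + t$, with $S$ skew-symmetric and $t \in \mathbb R^d$, a space of dimension ${d+1 \choose 2}$. Hence $\rank R(G,\p) \le d|V| - {d+1\choose 2}$. It is worth emphasizing here that the infinitesimal rotations $S\p_i$, which are \emph{not} trivial motions on $\Tor^d$, nevertheless lie in the kernel of $\R_0\pofw$ once the gains vanish; they are the source of the extra dependencies we are after.

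Finally I would combine the two observations: $\R_0\pofw$ has $|E|$ rows but rank at most $d|V| - {d+1\choose 2}$, so the hypothesis $|E| > d|V| - {d+1\choose 2}$ forces a nontrivial linear relation among the rows. Thus the edges of $\pofw$ are dependent on $\Tor^d$, and since no property of $\p$ was used beyond its values lying in $\mathbb R^d$, the conclusion holds for every configuration. The one step needing care is the rank bound $\rank R(G,\p) \le d|V| - {d+1\choose 2}$: it requires the ${d+1\choose 2}$ isometric velocity fields to be linearly independent as vectors in $\mathbb R^{d|V|}$, which holds precisely when the joints affinely span $\mathbb R^d$. I would therefore cite \cite{CountingFrameworks} or \cite{SomeMatroids} for this bound, and flag the lower-dimensional (non-spanning) configurations — where the independence of the isometries, and hence the literal inequality, must be verified directly — as the genuine technical point of the argument rather than the routine reduction that precedes it.
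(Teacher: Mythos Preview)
Your approach is exactly the paper's: the sentence immediately preceding the proposition \emph{is} its proof, namely that zero-gain rows coincide with rows of the ordinary (finite) rigidity matrix, whence the classical rank bound $d|V|-\binom{d+1}{2}$ applies.

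Your closing caveat about affine span is well taken and in fact sharper than the paper, which simply cites the finite bound without qualification. Note, however, that in the non-spanning case this is not merely a verification to be carried out: the inequality $\rank R(G,\p)\le d|V|-\binom{d+1}{2}$ can genuinely fail. For instance, with $d=3$ and $|V|=2$ one has $d|V|-\binom{d+1}{2}=0$, yet a single zero-gain edge between two distinct points gives a nonzero row, hence rank $1$. So the proposition as stated (``for any configuration $\p$'') is literally false when $|V|<d$; the implicit assumption---harmless for the paper's later uses in Theorems~\ref{thm:nDimConstructive} and~\ref{thm:dNecessary}---is that enough vertices are present for the finite bound to be meaningful. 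You have located the only real subtlety; just be aware it is a hypothesis to add rather than a lemma to prove.
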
 

Because loop edges are always dependent by Corollary \ref{cor:loopsAreDependent}, we restrict our attention to frameworks $\pofw$ that do not have loop edges. On the flexible torus, however, loops may be independent, but we do not consider that case here.

\begin{rem} The derived periodic framework corresponding to the periodic orbit framework in Example \ref{ex:4vertex} would not be considered minimally rigid as an infinite framework in the sense of being both independent and rigid. That is, disregarding the periodic qualities of the graph and recording an infinite dimensional rigidity matrix, it is not true that row rank equals column rank, and hence Corollary \ref{cor:rowRankColumnRank} is no longer true. Further details on this problem can be found in Guest and Hutchinson, \cite{DeterminancyRepetitive}.
\qed \end{rem}

\begin{rem} We can define a {\it $d$-periodic rigidity matroid} $\mathcal R_0\pofw$ on the edges of the $d$-periodic orbit framework: A set of edges is independent in the rigidity matroid $\mathcal R_0\pofw$ if the corresponding rows are independent in the periodic rigidity matrix $\R_0\pofw$. \index{matroid, periodic}
\qed \end{rem}

\begin{rem}
We can also use the structure we have developed to define a periodic rigidity matrix $\R$ for frameworks on the flexible torus. In this case, the rigidity matrix has dimension $|E| \times d|V| + {d+1 \choose 2}$, with one additional column for each variable entry in $L(t)$. Recall that there are ${d+1 \choose 2}$ non-zero entries in a $d \times d$ lower triangular matrix. Since the only trivial motions are the infinitesimal translations (that is, we don't get any new trivial motions on the flexible torus), we obtain the following flexible torus version of Theorem \ref{thm:fixedMatrixRank}:
\begin{thm}
A periodic orbit framework $\pofw$ is infinitesimally rigid on the flexible torus $\T^d$ if and only if the rigidity matrix $\R\pofw$ has rank $d|V|+ {d \choose 2}$. 
\end{thm}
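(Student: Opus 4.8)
The plan is to follow the same rank--nullity argument that proves Theorem~\ref{thm:fixedMatrixRank} for the fixed torus, adjusting only the column count to account for the extra lattice variables. First I would record that, by construction, the flexible-torus rigidity matrix $\R\pofw$ has $d|V| + {d+1 \choose 2}$ columns: $d|V|$ of them recording the velocities of the joints, and ${d+1 \choose 2}$ recording the admissible variations of the lower-triangular lattice matrix $L(t)$. As in the fixed case, the rows encode exactly the length-preservation equations, so the kernel of $\R\pofw$ is precisely the space of infinitesimal motions of $\pofw$ on the flexible torus $\T^d$.

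The next step is to pin down the space of trivial infinitesimal motions. By the discussion preceding the statement, allowing the lattice to flex introduces no new trivial motions: the only trivial infinitesimal motions on $\T^d$ are the infinitesimal translations, which form a $d$-dimensional subspace (and which clearly lie in the kernel, since a common translation of all joints preserves every bar length and leaves the lattice fixed). Consequently $\pofw$ is infinitesimally rigid on $\T^d$ if and only if every infinitesimal motion is a translation, that is, if and only if the kernel of $\R\pofw$ coincides with this $d$-dimensional space of translations; equivalently, since the translations always belong to the kernel, if and only if $\dim \ker \R\pofw = d$.

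Finally I would invoke rank--nullity. With $d|V| + {d+1 \choose 2}$ columns, the condition $\dim \ker \R\pofw = d$ is equivalent to $\rank \R\pofw = d|V| + {d+1 \choose 2} - d = d|V| + {d \choose 2}$, using the identity ${d+1 \choose 2} - {d \choose 2} = d$. This is exactly the claimed rank, which completes the equivalence.

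The main obstacle, and the only step that genuinely differs from the fixed-torus proof, is the verification that the flexible lattice contributes no additional trivial motions --- in particular that a simultaneous infinitesimal rotation of the joints together with a compensating deformation of the lattice does not preserve all pairwise distances. This rests on the normalization that the lattice matrix is kept lower triangular, so that the rotational gauge has already been fixed as in Proposition~\ref{prop:rotationEquiv}; it is precisely this fact, asserted in the remark above, that guarantees the trivial space remains $d$-dimensional rather than growing by the dimension of the rotation group.
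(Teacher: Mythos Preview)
Your argument is correct and is exactly the intended adaptation of the fixed-torus argument (Theorem~\ref{thm:fixedMatrixRank}): count columns, identify the kernel with the space of infinitesimal motions, observe that the trivial motions remain the $d$-dimensional space of translations, and apply rank--nullity together with ${d+1 \choose 2} - d = {d \choose 2}$. The paper itself does not supply an independent proof of this statement; it is recorded inside a remark and the reader is referred to Borcea and Streinu \cite{periodicFrameworksAndFlexibility} for confirmation, so there is no separate argument to compare against. Your closing paragraph correctly isolates the only nontrivial point, namely that the lower-triangular normalization of the lattice (Proposition~\ref{prop:rotationEquiv}) has already absorbed the rotational freedom, which is precisely why no new trivial motions appear.
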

This result is confirmed in the work of Borcea and Streinu \cite{periodicFrameworksAndFlexibility}. Furthermore it is possible to consider intermediate cases, where only some $k < {d+1 \choose 2}$ of the entries of the lattice matrix $L(t)$ are variable. The consideration of this case is left to \cite{myThesis}.
\end{rem}

\subsection{Stresses and independence}

A row dependence among the rows of the rigidity matrix can be thought of as a {\it stress} on the edges of the periodic orbit matrix, or equivalently a {\it periodic stress}  on the edges of a periodic framework. This topic has been considered by Guest and Hutchinson \cite{DeterminancyRepetitive}. The minimally rigid graphs are therefore the graphs that do not have any infinitesimal motions, or any stresses among their edges. In finite rigidity, this state is called {\it isostatic}, \index{isostatic} but we avoid this terminology here for the reasons outlined in \cite{DeterminancyRepetitive}. Borcea and Streinu also define stresses for $d$-periodic frameworks with a flexible lattice in \cite{periodicFrameworksAndFlexibility}.

%%%%%%The Unit Torus and Affine Transformations%%%%%%%%
\subsection{The unit torus and affine transformations }
\label{sec:theUnitTorus}
In this section we show that frameworks on the unit torus can be used to model all $d$-periodic orbit frameworks, since Theorem \ref{thm:affineInvariance} will demonstrate that infinitesimal rigidity of periodic orbit frameworks is affinely invariant. 

An {\it affine transformation} is a map $A:  \mathbb{R}^d  \longrightarrow  \mathbb{R}^d$, with  $x  \longmapsto   x B + t$,
where $B$ is an invertible $d \times d$ matrix, and $t \in \mathbb{R}^d$. The next result concerning the affine invariance of independence on $\Tor^d$ was shown independently in \cite{periodicFrameworksAndFlexibility}, and we omit the proof, which is straightforward. \index{affine invariance!fixed torus}
\begin{thm}
Let $\pofw$ be a $d$-periodic orbit framework on $\Tor^d$. Let $L_0$ be the $d \times d$ lattice matrix whose rows are the generators of $\Tor^d$. Let $A$ be an affine transformation of $\mathbb{R}^d$, with $A(x) = xB + t$, and where $A(p) = (A(p_1), \dots, A(p_{|V|})) \in \mathbb R^{d|V|}$. Then the edges of $\pofwA$ are independent on $\mathbb{R}^d/L_0B\mathbb Z^d$ if and only if the edges of $\pofw$ are independent on $\Tor^d = \mathbb{R}^d/L_0 \mathbb Z^d$. 
\label{thm:affineInvariance}
\end{thm}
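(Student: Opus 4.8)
The plan is to show that applying the affine map $A$ simply right-multiplies each vertex-block of the rigidity matrix by the invertible linear part $B$, so that the linear dependencies among the rows are unchanged. Since the translation part $t$ appears only inside differences of vertex positions, it cancels and plays no role; only $B$ matters, together with the fact that the lattice matrix for the image framework is $L_0 B$.

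First I would write down, for a fixed edge $e = \{i, j; \bm_e\}$, the corresponding row of $\R_0\pofwA$ on $\mathbb R^d / L_0 B \mathbb Z^d$. Its non-zero block in the $i$-columns is $A(\p_i) - (A(\p_j) + \bm_e L_0 B)$. Substituting $A(x) = xB + t$ and expanding gives
\[
(\p_i B + t) - (\p_j B + t + \bm_e L_0 B) = (\p_i - \p_j - \bm_e L_0)B,
\]
which is exactly the corresponding block of $\R_0\pofw$ right-multiplied by $B$; the $j$-block is its negative, and all other entries remain zero. Thus every $d$-dimensional block of every row transforms by right multiplication by $B$.

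Next I would package this as a single matrix identity. Let $\mathcal B$ be the $d|V| \times d|V|$ block-diagonal matrix with $|V|$ copies of $B$ along the diagonal. The computation above says precisely that
\[
\R_0\pofwA = \R_0\pofw \cdot \mathcal B.
\]
Because $B$ is invertible, $\mathcal B$ is invertible, and right multiplication by an invertible matrix preserves the row space. In particular, for any subset of rows (indexed by any subset of edges) the corresponding submatrices are related by the same right multiplication, so one is of full row rank if and only if the other is. Taking the full edge set yields the claimed equivalence: the edges of $\pofwA$ are independent on $\mathbb R^d / L_0 B \mathbb Z^d$ if and only if the edges of $\pofw$ are independent on $\Tor^d = \mathbb R^d / L_0 \mathbb Z^d$.

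I do not expect a serious obstacle; the points needing care are bookkeeping ones. One must use the row-vector convention consistently so that $B$ multiplies on the right, and one should observe that the image lattice matrix is $L_0 B$ rather than $L_0$, which is exactly what makes the gain terms $\bm_e L_0$ transform in step with the vertex positions. A minor sanity check is that the image positions $A(\p_i)$ remain pairwise distinct on the new torus, which follows from $A$ being a bijection of $\mathbb R^d$ carrying $L_0\mathbb Z^d$ onto $L_0 B\mathbb Z^d$.
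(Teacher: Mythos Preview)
Your argument is correct and is precisely the straightforward computation one would expect: the translation $t$ cancels in the differences, the gain term transforms correctly because the new lattice matrix is $L_0B$, and each $d$-block of each row is right-multiplied by $B$, giving $\R_0\pofwA = \R_0\pofw\cdot\mathcal B$ with $\mathcal B$ invertible. The paper itself omits the proof entirely, noting only that it is straightforward (and that the result appears independently in \cite{periodicFrameworksAndFlexibility}), so there is nothing to compare against; your write-up supplies exactly the routine verification the paper left to the reader.
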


\begin{cor}
Let $\mathcal F = \pofw$ be a $d$-periodic orbit framework on $\Tor^d$, where $L_0$ is the $d \times d$ matrix of generators of $\Tor^d$. Let $\mathcal F'$ be the image of $\mathcal F$ under the unique affine transformation of $\mathbb R^d$ which maps $L_0$ to the $d$-dimensional identity matrix $I_{d \times d}$. Then $\mathcal F$ is infinitesimally rigid on $\Tor^d$ if and only if $\mathcal F'$ is infinitesimally rigid on the $d$-dimensional unit torus, $\Unit^d$. 
\label{cor:unitTorus}
\end{cor}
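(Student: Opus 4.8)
The plan is to reduce infinitesimal rigidity to a rank condition via Theorem~\ref{thm:fixedMatrixRank} and then check that the affine map in question preserves that rank. First I would pin down the transformation: the unique affine map $A(x) = xB + t$ sending $L_0$ to $I_{d\times d}$ has linear part $B = L_0^{-1}$, since $L_0 B = I$. Consequently $\mathcal F' = \pofwA$ is a framework on $\mathbb R^d / L_0 B\,\mathbb Z^d = \mathbb R^d / I\,\mathbb Z^d = \Unit^d$, and it shares the gain graph $\pog$ (hence the vertex count $|V|$) with $\mathcal F$.

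The key step is to compare the two rigidity matrices directly. Applying $A$ to the configuration sends each $\p_i$ to $\p_i B + t$ while leaving the gains $\bm_e$ fixed, so the $d$-dimensional block of $\R_0\mathcal F'$ in the column of vertex $v_i$ for the edge $\{i,j;\bm_e\}$ becomes
\[
(\p_i B + t) - \big((\p_j B + t) + \bm_e L_0 B\big) = \big(\p_i - \p_j - \bm_e L_0\big)B,
\]
that is, the corresponding block of $\R_0\mathcal F$ right-multiplied by $B$ (and similarly, with a sign, in the column of $v_j$). Collecting all blocks, $\R_0\mathcal F' = \R_0\mathcal F \cdot (I_{|V|}\otimes B)$, where $I_{|V|}\otimes B$ is the block-diagonal $d|V|\times d|V|$ matrix with $|V|$ copies of $B$ on the diagonal. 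Since $B$ is invertible, so is $I_{|V|}\otimes B$, and therefore $\rank \R_0\mathcal F' = \rank \R_0\mathcal F$. (One could instead quote Theorem~\ref{thm:affineInvariance} applied to every subgraph $G'\subseteq G$, which shows that a set of rows is independent on $\Unit^d$ iff it is independent on $\Tor^d$, so the size of a maximal independent row set—i.e.\ the rank—is preserved; but the block-multiplication argument avoids the subset-by-subset bookkeeping.)

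Finally I would invoke Theorem~\ref{thm:fixedMatrixRank} on each torus. Because $\mathcal F$ and $\mathcal F'$ have the same number of vertices $|V|$, the rigidity threshold is $d|V|-d$ in both cases, so $\mathcal F$ is infinitesimally rigid on $\Tor^d$ iff $\rank \R_0\mathcal F = d|V|-d$ iff $\rank \R_0\mathcal F' = d|V|-d$ iff $\mathcal F'$ is infinitesimally rigid on $\Unit^d$. The only real subtlety—and the step I would flag as the main obstacle—is the passage from ``independence is affinely invariant'' (Theorem~\ref{thm:affineInvariance}) to ``infinitesimal rigidity is affinely invariant,'' since rigidity is a statement about the \emph{rank} and not merely about independence of a prescribed row set; the explicit identity $\R_0\mathcal F' = \R_0\mathcal F \cdot (I_{|V|}\otimes B)$ settles this cleanly by exhibiting the rank-preservation directly rather than inferring it from independence.
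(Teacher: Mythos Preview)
Your argument is correct and is essentially the paper's approach, written out explicitly. The paper states this result as an immediate corollary of Theorem~\ref{thm:affineInvariance} (whose proof the paper also omits as ``straightforward''); your block-diagonal identity $\R_0\mathcal F' = \R_0\mathcal F \cdot (I_{|V|}\otimes B)$ is precisely the computation underlying that omitted proof, so the two routes coincide. One small remark: the ``subtlety'' you flag is milder than you suggest, since Theorem~\ref{thm:affineInvariance} already says a set of rows is independent on one torus iff it is independent on the other, and the rank is exactly the size of a maximal independent row set---so rank preservation follows at once without the explicit matrix factorization, though your direct calculation is a clean way to see it.
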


\begin{rem}
It is essential that the affine transformation of Corollary \ref{cor:unitTorus} act on both the points of the framework, and the generators of the torus (the rows of $L_0$). In other words, it is {\it not } true that a framework $\pofw$ is infinitesimally rigid on $\Tor^d$ if and only if an affine image of the framework $\pofw$ is infinitesimally rigid on $\Tor^d$. The framework pictured in Figure \ref{fig:affineCounter} is an example. \qed \end{rem}

\begin{verse}
\begin{figure}
\begin{center}
\subfloat[]{\begin{tikzpicture} 
\tikzstyle{vertex1}=[circle, draw, fill=couch, inner sep=1pt, minimum width=4pt]; 
\tikzstyle{vertex2}=[circle, draw, fill=lips, inner sep=1pt, minimum width=4pt]; 
\tikzstyle{vertex3}=[circle, draw, fill=melon, inner sep=1pt, minimum width=4pt]; 
\tikzstyle{vertex4}=[circle, draw, fill=bluey, inner sep=1pt, minimum width=4pt]; 

\node[vertex1] (1) at (1,-1) {};
\node[vertex1] (2) at (0,0) {};

\draw[thick] (1) -- (2)  ;
\draw[thick] (1) -- (1.5, -0.5);
\draw[thick] (-0.5, -0.5) -- (2);

\pgfsetarrowsend{latex} 
	\draw[sky, very thick] (-.5, -1.5) -- (1.5, -1.5);
	\draw[sky, very thick] (-.5, .5) -- (1.5, .5);
	\draw[sky, very thick] (-.5, -1.5) -- (-.5, .5);
	\draw[sky, very thick] (1.5, -1.5) -- (1.5, .5);
\pgfsetarrowsend{} 
\end{tikzpicture}}\hspace{1cm}
\subfloat[]{\begin{tikzpicture} 
\tikzstyle{vertex1}=[circle, draw, fill=couch, inner sep=1pt, minimum width=4pt]; 
\tikzstyle{vertex2}=[circle, draw, fill=lips, inner sep=1pt, minimum width=4pt]; 
\tikzstyle{vertex3}=[circle, draw, fill=melon, inner sep=1pt, minimum width=4pt]; 
\tikzstyle{vertex4}=[circle, draw, fill=bluey, inner sep=1pt, minimum width=4pt]; 

\pgfsetarrowsend{latex} 
	\draw[sky, very thick] (-.5, -1.5) -- (1.5, -1.5);
	\draw[sky, very thick] (-.5, .5) -- (1.5, .5);
	\draw[sky, very thick] (-.5, -1.5) -- (-.5, .5);
	\draw[sky, very thick] (1.5, -1.5) -- (1.5, .5);
\pgfsetarrowsend{} 

\clip (-.5, -1.5) --  (1.5, -1.5) -- (1.5, .5) -- (-.5, .5) --(-.5, -1.5);

\pgftransformyshift{-.3cm}
\node[vertex1] (1) at (1,-1) {};
\node[vertex1] (2) at (0,0) {};

\draw[thick] (1) -- (2)  ;
\draw[thick] (1) -- (1.5, -0.5);
\draw[thick] (-0.5, -0.5) -- (2);

\pgfsetarrowsend{stealth}
\draw[pink, very thick] (1) -- (1, -.6);
\draw[pink, very thick] (2) -- (0, .4);
\pgfsetarrowsend{}

\pgftransformyshift{.3cm}

\pgftransformyshift{.3cm}
\node[vertex1] (1) at (1,-1) {};
\node[vertex1] (2) at (0,0) {};

\draw[thick] (1) -- (2)  ;
\draw[thick] (1) -- (1.5, -0.5);
\draw[thick] (-0.5, -0.5) -- (2);
\pgftransformyshift{-.3cm}

\pgftransformyshift{.9cm}
\node[vertex1] (1) at (1,-1) {};
\node[vertex1] (2) at (0,0) {};

\draw[thick] (1) -- (2)  ;
\draw[thick] (1) -- (1.5, -0.5);
\draw[thick] (-0.5, -0.5) -- (2);
\pgftransformyshift{-.9cm}

\pgftransformyshift{-.9cm}
\node[vertex1] (1) at (1,-1) {};
\node[vertex1] (2) at (0,0) {};

\draw[thick] (1) -- (2)  ;
\draw[thick] (1) -- (1.5, -0.5);
\draw[thick] (-0.5, -0.5) -- (2);
\pgftransformyshift{.9cm}

\pgftransformyshift{1.5cm}
\node[vertex1] (1) at (1,-1) {};
\node[vertex1] (2) at (0,0) {};

\draw[thick] (1) -- (2)  ;
\draw[thick] (1) -- (1.5, -0.5);
\draw[thick] (-0.5, -0.5) -- (2);
\pgftransformyshift{-1.5cm}

\pgftransformyshift{-1.5cm}
\node[vertex1] (1) at (1,-1) {};
\node[vertex1] (2) at (0,0) {};

\draw[thick] (1) -- (2)  ;
\draw[thick] (1) -- (1.5, -0.5);
\draw[thick] (-0.5, -0.5) -- (2);
\pgftransformyshift{1.5cm}

\end{tikzpicture}}
\caption{The framework pictured in (a) is infinitesimally rigid on the fixed torus $\Tor^2$. The affine transformation of the framework shown in (b), without a corresponding affine transformation of $\Tor^2$,  is {\it not} infinitesimally rigid, as indicated.  \label{fig:affineCounter}}
\end{center}
\end{figure}
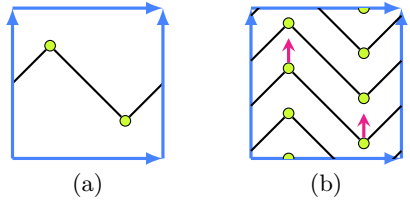\end{verse}

%%%%%Generic Frameworks%%%%%%
\subsection{Generic frameworks}
\label{sec:genericFrameworks}
\index{periodic orbit framework!generic|(}
Let $V$ be a finite set of vertices, and let $\p$ be a realization of these vertices on to the $d$-dimensional unit torus $\Unit^d = [0,1)^d$.  Let $k \in \mathbb Z_+$ be given, and let $K$ be the set of all edges between pairs of vertices of $V$ with gains $\bm_e = (m_{e,1}, m_{e,2}, \dots, m_{e,d})$ where $|m_{e,i}| \leq k$ for $i=1, \dots d$. Then $K$ is the set of all edges with bounded gains.

Consider a set of edges $E \subset K$ such that, for some realization $\p$, the rows of $\R_0$ corresponding to $E$ are independent. The determinants of the $|E| \times |E|$ submatrices of these rows will either be identically zero or will define an algebraic variety in $\mathbb R^{d|V|}$ (by setting these determinants equal to zero, and taking the $\p_i$'s as variables). The collection of all such varieties, corresponding to all such subsets $E$ will define a closed set of measure zero (this set is a finite union of closed sets of measure zero). Let this set be denoted $\mathcal X_k$. The complement of $\mathcal X_k$ in $\mathbb R^{d|V|}$ is an open dense set in $\mathbb R^{d|V|}$, and hence its restriction to the subspace of realizations $\p$ of the vertices $V$ on the unit torus, $[0,1)^{d|V|}$ is also open and dense.

Any realization $\p$ of the vertex set $V$ where $\p \notin \mathcal X_k$ will be called {\it k-generic} (recall that $k$ was the upper bound on the gain assignments).  More generally, we may consider graphs that are $k$-generic for any $k$. By the Baire Category Theorem, the countable intersection 
\[\bigcap_{k \in \mathbb Z} \big( \mathbb R^{d|V|} - \mathcal X_k \big)\]
is dense in $\mathbb R^{d|V|}$, as the intersection of open dense sets in the Baire space $\mathbb R^{d|V|}$ \cite{munkres}. We refer to a realization in this set as simply {\it generic}, and it is this definition that we use throughout the remainder of this paper. 

%It is also possible to define generic positions $\p$ by demanding that the coordinates of the realization $\p$ are algebraically independent over $\mathbb Q$.  This is the definition used by R. Connelly in \cite{GenGlobalRigidity} and others. The set of all such realizations is dense, but not open. The relationships between these different conceptions of generic are indicated in Figure \ref{fig:generic}. Note that in the case of generic and algebraically independent positions, we need to avoid a countably infinite number of polynomial conditions. In contrast, $k$-generic frameworks need only avoid a finite number of polynomial conditions.  
%
%\begin{verse}
% \begin{figure} \begin{center}
%\begin{tikzpicture}[very thick, bend angle=20, scale=.9]
%\tikzstyle{vertex1}=[circle, draw, thin, fill=couch, inner sep=1pt, minimum width=3pt]; 
%\tikzstyle{vertex2}=[circle, draw, fill=lips, inner sep=1pt, minimum width=4pt]; 
%\pgfsetfillopacity{0.1} 
%\draw[rounded corners=5ex, thin, fill=blue] (0,0) rectangle (9,3.5); 
%\draw[rounded corners=4ex, thin, fill=blue] (.25,0.25) rectangle (6,3.25); 
%\draw[rounded corners=3ex, thin, fill=blue] (.5,0.5) rectangle (3.5,3); 
%\pgfsetfillopacity{1} 
%
%\node[ text width=2cm,text centered ] at (7.5, 1.75) {$k$-generic };
%
%\node[ text width=2cm,text centered ] at (4.5, 1.75) {generic};
%
%\node[ text width=2cm,text centered ] at (1.75, 1.75) {algebraically independent over $\mathbb Q$};
%\end{tikzpicture}
%\caption{Three different types of generic.  \label{fig:generic}}
% \end{center} \end{figure} \end{verse}
%
\begin{cor} {\rm  (to Theorem \ref{thm:affineInvariance})}
Let $A$ be an affine transformation of $\mathbb R^d$ which maps $L_0$ to the identity matrix $I_{d \times d}$, and let $\pog$ be a periodic orbit graph. $A(\p)$ is a generic realization of $\pog$ on the unit torus, $\Unit^d = [0,1)^d$ if and only if $\p$ is a generic realization of $\pog$ on $\Tor^d$.
\end{cor}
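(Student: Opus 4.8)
The plan is to argue entirely at the level of the determinantal varieties $\mathcal X_k$ used to define genericity, and to show that the invertible, coordinatewise affine map $A$ carries the exceptional set for $\Tor^d$ onto the exceptional set for $\Unit^d$. Throughout, $K$ denotes the graph of all edges with gains bounded by $k$, and for an edge subset $E \subseteq K$ I write $\R_0[E]$ for the submatrix of $\R_0$ consisting of the rows indexed by $E$. Recall that $\p$ is $k$-generic on a given torus precisely when $\p \notin \mathcal X_k$, i.e. when every edge set $E \subseteq K$ that is \emph{independent-capable} (meaning its rows are independent for \emph{some} realization) is in fact independent at $\p$; and $\p$ is generic when it is $k$-generic for every $k$. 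So the whole corollary reduces to showing that $A$ sends $\mathcal X_k$ (for lattice $L_0$) onto $\mathcal X_k$ (for lattice $I_{d\times d}$), for every $k$.

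First I would record the induced map on configuration space. Writing $A(x) = xB + t$ with $L_0 B = I_{d\times d}$, the map induces a bijection $A_* : \mathbb R^{d|V|} \to \mathbb R^{d|V|}$ sending a lifted realization $\p = (\p_1, \dots, \p_{|V|})$ to $A(\p) = (A(\p_1), \dots, A(\p_{|V|}))$; it is a bijection because $B$ is invertible. Since $A$ maps the lattice $L_0$ to $L_0 B = I_{d\times d}$, it carries $\Tor^d = \mathbb R^d / L_0 \mathbb Z^d$ to $\mathbb R^d / L_0 B\mathbb Z^d = \Unit^d$, so Theorem \ref{thm:affineInvariance} applies verbatim with this choice of $A$.

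Next I would transfer the two ingredients in the definition of $\mathcal X_k$ across $A_*$. By Theorem \ref{thm:affineInvariance}, for every fixed $E \subseteq K$ the rows $\R_0[E]$ are independent at $\p$ on $\Tor^d$ if and only if they are independent at $A(\p)$ on $\Unit^d$. Applying this over all realizations and using that $A_*$ is a bijection (so that $A(\p')$ runs over all realizations on $\Unit^d$ as $\p'$ runs over all realizations on $\Tor^d$), the family of independent-capable sets $E$ is identical whether computed on $\Tor^d$ or on $\Unit^d$. Combining the two facts: $\p \in \mathcal X_k$ for $\Tor^d$, i.e. some independent-capable $E$ has $\R_0[E]$ dependent at $\p$, holds if and only if that same $E$ (independent-capable on $\Unit^d$ as well) has $\R_0[E]$ dependent at $A(\p)$, i.e. if and only if $A(\p) \in \mathcal X_k$ for $\Unit^d$. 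Hence $A_*$ maps the exceptional set for $\Tor^d$ bijectively onto the exceptional set for $\Unit^d$. Since $\p$ fails to be $k$-generic on $\Tor^d$ exactly when $A(\p)$ fails to be $k$-generic on $\Unit^d$, and this holds for all $k$, we obtain $\p \in \bigcap_k(\mathbb R^{d|V|} \setminus \mathcal X_k)$ for $\Tor^d$ if and only if $A(\p)$ lies in the corresponding intersection for $\Unit^d$; that is, $\p$ is generic on $\Tor^d$ if and only if $A(\p)$ is generic on $\Unit^d$. The restriction of the genericity notion to $[0,1)^{d|V|}$ causes no difficulty, since both the sets $\mathcal X_k$ and the map $A_*$ are defined on all of $\mathbb R^{d|V|}$.

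I expect the main obstacle to be the bookkeeping needed to see that the collection of independent-capable edge sets is genuinely the same on the two tori; this is exactly where invertibility of $A$ is essential, and where one must resist treating genericity as intrinsic to a single torus rather than as a comparison of the rank at $\p$ against the generic (maximal) rank of each $E$. Once the independent-capable families are identified, the remainder is a direct application of Theorem \ref{thm:affineInvariance}.
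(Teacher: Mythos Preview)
Your proposal is correct, and indeed more detailed than what the paper provides: the paper states this result as an immediate corollary to Theorem~\ref{thm:affineInvariance} and gives no separate proof. Your argument---that the invertible affine map $A_*$ carries the family of independent-capable edge sets and the exceptional loci $\mathcal X_k$ for $\Tor^d$ bijectively onto those for $\Unit^d$, and hence transports the genericity condition---is exactly the elaboration that the paper leaves implicit.
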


As a consequence of this result, from this point forward we assume that all frameworks are realized on the unit torus.  That is, $\p: V \longrightarrow [0,1)^d$, and $L_0 = I_{d \times d}$, the identity matrix. We continue to write $\Tor^d$, but drop the matrix ``$L_0$" from expressions involving gains, since $m_eL_0 = m_e I_{d \times d} = m_e$.

The following result states that for a given $d$-periodic orbit graph, all generic realizations share the same rigidity properties. Compare Lemma 2.2.1 in \cite{SomeMatroids}.

\begin{lem}[\bf Special Position Lemma] 
Let $\pog$ be a $d$-periodic orbit graph, and suppose that for some realization $\p_0$ of $\pog$ on $\Tor^d$ the framework $(\pog, \p_0)$ is infinitesimally rigid. Then for all generic realizations $\p$ of $\pog$ on $\Tor^d$, the framework $\pofw$ is infinitesimally rigid.
\label{lem:specialPosition}
\end{lem}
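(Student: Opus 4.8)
The plan is to recast infinitesimal rigidity entirely as a statement about the rank of the rigidity matrix $\R_0\pofw$, and then to exploit the way generic realizations were constructed to avoid the vanishing loci of the relevant minors. By Theorem~\ref{thm:fixedMatrixRank}, $\pofw$ is infinitesimally rigid on $\Tor^d$ exactly when $\rank \R_0\pofw = d|V| - d$; and since the $d$ infinitesimal translations always lie in the kernel of $\R_0\pofw$, the rank can never exceed $d|V|-d$. It therefore suffices to show that the rank attains the value $d|V|-d$ at every generic realization, given that it does so at the single realization $\p_0$.

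First I would use the hypothesis that $(\pog, \p_0)$ is infinitesimally rigid to select $d|V|-d$ rows of $\R_0(\pog, \p_0)$ that are linearly independent; these index a subset $E' \subseteq E$ of the edges of $G$, and there is some choice of $d|V|-d$ columns for which the resulting $(d|V|-d)\times(d|V|-d)$ minor $M$ is nonzero at $\p_0$. Because $G$ is a fixed finite gain graph, its gain assignment $\bm$ takes only finitely many values, so there is a $k \in \mathbb Z_+$ with $|m_{e,i}| \le k$ for every edge $e$ and coordinate $i$; hence every edge of $G$, and in particular every edge of $E'$, lies in the set $K$ of edges with gains bounded by $k$. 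Since $M$ is nonzero at $\p_0$, it is not the identically zero polynomial in the coordinates of $\p$, so its zero set is a proper subvariety of $\mathbb R^{d|V|}$ and is therefore one of the varieties whose union constitutes $\mathcal X_k$.

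Next, let $\p$ be any generic realization. Then $\p \in \bigcap_{k} (\mathbb R^{d|V|} - \mathcal X_k)$, so in particular $\p$ is $k$-generic and hence avoids $\mathcal X_k$; thus $M(\p) \neq 0$, the rows indexed by $E'$ are independent at $\p$, and $\rank \R_0\pofw \ge d|V|-d$. Combined with the upper bound above, this forces $\rank \R_0\pofw = d|V|-d$, whereupon Theorem~\ref{thm:fixedMatrixRank} gives that $\pofw$ is infinitesimally rigid.

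The step I expect to demand the most care is the bookkeeping that ties the witnessing minor $M$ to the construction of $\mathcal X_k$: one must verify that $M$ is genuinely a nonzero polynomial (so that its zero set is a proper subvariety rather than all of $\mathbb R^{d|V|}$), that the edge set $E'$ carrying it sits inside some $K$, and that the associated variety is indeed collected into $\mathcal X_k$. Once these points are pinned down, the remainder is a direct application of the rank characterization together with the definition of genericity, and no further geometric information about the positions $\p_i$ is required — mirroring the classical finite-dimensional argument (compare Lemma~2.2.1 in \cite{SomeMatroids}).
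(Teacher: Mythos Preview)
Your proposal is correct and follows essentially the same approach as the paper's proof: both reduce to showing $\rank \R_0\pofw \geq \rank \R_0(\pog,\p_0)$ at generic $\p$ and then invoke Theorem~\ref{thm:fixedMatrixRank}. The paper compresses your middle steps into a single appeal ``by definition of generic,'' whereas you carefully unpack why the witnessing minor $M$ lands in $\mathcal X_k$ and hence survives at every generic point --- a more explicit but not genuinely different argument.
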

\index{special position lemma}

\begin{proof}
Since the framework $(\pog, \p_0)$ is infinitesimally rigid on $\Tor^d$, the rigidity matrix for  $(\pog, \p_0)$ has maximum rank, $\rank \R_0(\pog, p_0) = d|V|-d$ (Theorem \ref{thm:fixedMatrixRank}). By definition of generic, any framework $\pofw$ with $\p$ generic will have 
$$ \rank \R_0 \pofw \geq \rank  \R_0(\pog, \p_0)$$
It follows that $\rank  \R_0\pofw = d|V|-d$, and the framework $\pofw$ is infinitesimally rigid.  %
\end{proof}

The following modification of the Special Position Lemma states that the coordinates of $\p_0$ need not be on the unit torus, but can in fact be taken anywhere in $\mathbb R^{d|V|}$. %This corollary follows from the fact that the set of generic realizations is open and dense. 
\begin{cor}[\bf Modified Special Position Lemma]
Let $\pog$ be a $d$-periodic orbit graph, and suppose that for some realization $\p_0: V \rightarrow \mathbb R^{d|V|}$ the rigidity matrix $\R_0(\pog, \p_0)$ has rank $d|V|-d$.  Then for all generic realizations $\p$ of $\pog$ on $\Tor^d = [0,1)^d$, the framework $\pofw$ is infinitesimally rigid.
\label{lem:specialPositionMod}
\end{cor}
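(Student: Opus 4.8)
The plan is to reduce the statement to the same rank comparison used in Lemma \ref{lem:specialPosition}, but to justify that comparison from the polynomial structure of the rigidity matrix rather than from the position of $\p_0$ on the torus. The key observation is that the entries of $\R_0(\pog, \mathbf{x})$ are affine functions of the coordinates of a realization $\mathbf{x} \in \mathbb R^{d|V|}$ (each nonzero block has the form $\p_i - \p_j - \bm_e$), so every maximal minor of $\R_0(\pog, \mathbf{x})$ --- that is, a determinant of a $(d|V|-d) \times (d|V|-d)$ submatrix --- is a polynomial in the $d|V|$ vertex coordinates, defined on all of $\mathbb R^{d|V|}$ and not merely on the torus.

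First I would use the hypothesis $\rank \R_0(\pog, \p_0) = d|V|-d$ to extract a set $E'$ of $d|V|-d$ edges of $G$, together with a choice of $d|V|-d$ columns, so that the corresponding minor $\mu$ satisfies $\mu(\p_0) \neq 0$; in particular $\mu$ is not the zero polynomial. Since $G$ is finite, there is a bound $k_0$ with $|m_{e,i}| \leq k_0$ for every edge $e$ of $G$ and every coordinate $i$, so $E' \subseteq K$, where $K$ is the bounded-gain edge set from the definition of genericity. Because $\mu \not\equiv 0$ the rows indexed by $E'$ are independent at $\p_0$, and a nonzero polynomial cannot vanish on the nonempty open cube $(0,1)^{d|V|}$; hence these rows are independent at some realization on the unit torus as well. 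Thus $E'$ is one of the independent edge sets entering the construction of $\mathcal X_{k_0}$, and the zero locus of $\mu$ is one of the varieties whose union is $\mathcal X_{k_0}$.

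Next I would invoke the definition of generic directly. Any generic realization $\p$ of $\pog$ on $\Tor^d$ lies outside $\mathcal X_k$ for every $k$, in particular outside $\mathcal X_{k_0}$, so $\mu(\p) \neq 0$. The rows of $E'$ therefore remain independent at $\p$, giving $\rank \R_0 \pofw \geq d|V|-d$. On the other hand, Theorem \ref{thm:fixedMatrixRank} guarantees that the rank of the fixed-torus rigidity matrix never exceeds $d|V|-d$, owing to the $d$-dimensional space of trivial (translational) motions. Combining the two inequalities yields $\rank \R_0 \pofw = d|V|-d$, so $\pofw$ is infinitesimally rigid.

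The step I expect to be the main obstacle, and the only genuine difference from Lemma \ref{lem:specialPosition}, is reconciling the off-torus hypothesis on $\p_0$ with a notion of genericity defined purely in terms of realizations on the torus. The resolution is that infinitesimal rigidity is certified by the non-vanishing of a minor \emph{polynomial}, and non-vanishing at a single point of $\mathbb R^{d|V|}$ already forces the polynomial to be not identically zero; genericity on the torus then forces non-vanishing there as well. The one point requiring care is checking that the relevant minor actually appears among those defining some $\mathcal X_k$, which is secured by the finiteness of $G$ (to bound the gains) together with the fact that a nonzero polynomial is nonzero somewhere in the interior of the unit cube.
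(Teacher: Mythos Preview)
Your proof is correct and follows essentially the same approach as the paper: both arguments recognize that the rank condition is witnessed by the non-vanishing of a polynomial minor defined on all of $\mathbb R^{d|V|}$, so a single off-torus witness forces the minor to be not identically zero, and genericity on the torus then guarantees non-vanishing there. The paper's proof is terser---it simply observes that the generic locus is naturally defined in $\mathbb R^{d|V|}$ and restricts to the torus, then invokes the argument of Lemma~\ref{lem:specialPosition}---whereas you explicitly verify that the relevant minor appears among those defining $\mathcal X_{k_0}$ by bounding the gains and checking non-vanishing on the open cube; this extra care is justified given how the definition of generic is phrased in the paper, but the underlying idea is the same.
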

\index{special position lemma!modified}

\begin{proof}
Recall that the set of generic realizations of a vertex set $V$ is dense in $\mathbb R^{d|V|}$, and that the set of generic realizations on the torus is simply the restriction of this larger set to $[0, 1)^{d|V|}$. By the arguments of the proof of Lemma \ref{lem:specialPosition}, if $\R_0(\pog, \p_0) = d|V|-d$ for {\it some} realization $p_0 \in \mathbb R^{d|V|}$, then $\R_0(\pog, \p) = d|V|-d$ for all generic realizations in $\mathbb R^{d|V|}$, which includes all generic realizations on $[0, 1)^{d|V|}$. %
\end{proof}

This result should be understood to mean that we can pick any representatives of a vertex, provided that the edge representatives are the same, in the sense that the corresponding rows of the rigidity matrix are unchanged.  In light of these results, we may say that a periodic orbit graph $\pog$ is {\it generically rigid} on $\Tor^2$, meaning that the periodic orbit framework $\pofw$ is rigid for all generic realizations $p$ of the vertices of $G$.
\index{periodic orbit framework!generic|)}
%The following result indicates that establishing the infinitesimal rigidity of a framework is enough to guarantee rigidity.
%\begin{prop}
%If $\pofw$ is generically infinitesimally rigid on $\Tor^d$, then it is generically rigid on $\Tor^d$. 
%\end{prop}

%%%%AVERAGING%%%
\subsection{Infinitesimal rigidity implies rigidity}
\label{sec:averaging}

It is possible to prove a periodic version of the Asimow and Roth result which demonstrates that for generic frameworks, infinitesimal rigidity and rigidity are equivalent \cite{AsimowRoth}. The central ideas of their proof carry over to the periodic context, since we are working with a finite matrix corresponding to a finite graph on a torus. The full development of this idea can be found in \cite{myThesis}.

It is more straightforward to show that infinitesimal rigidity always implies rigidity for periodic orbit frameworks. In \cite{myThesis} we use the averaging technique to do this (see also \cite{UnpublishedRigidityBookChapter2}); however there are a number of proofs that could easily be adapted to the periodic setting, and we do not include the details here. 

\subsection{$T$-gain procedure preserves infinitesimal rigidity on $\Tor^d$}
\label{sec:TGainsPreserveInfinitesimalRigidity}

\index{T-gain procedure!on $\Tor^d$}
In section \ref{sec:Tgains} we described the $T$-gain procedure for identifying the local gain group of a graph.  We noted that the original gain assignment $\bm$ and the $T$-gain assignment $\bm_T$ can be seen as simply two different ways to describe the same infinite periodic graph. Most importantly, we now confirm that the rigidity matrices corresponding to these two periodic orbit graphs have the same rank. In fact this is a {\it geometric} statement, with a generic corollary. 

\begin{thm}
For any framework $\pofw$, 
$$\rank \R_0\pofw = \rank \R_0(\tgT, p'), $$
where $\p':V \rightarrow \mathbb R^d$ is given by $\p'_i = \p_i + \bm_T(v_i)$.
\label{thm:TgainsPreserveRigidity}
\end{thm}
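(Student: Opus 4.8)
The plan is to show that the two rigidity matrices are literally equal, row by row, from which equality of rank is immediate. The $T$-gain procedure alters only the gain labels, leaving the base graph $G$, its vertex set, and its edge set untouched; consequently $\R_0\pofw$ and $\R_0(\tgT, \p')$ have the same dimensions, their rows are indexed by the same edges $e \in E$, and for a fixed edge $e = \{v_i, v_j; \cdot\}$ the two nonzero $d$-blocks occupy the same columns (those of $v_i$ and $v_j$). It therefore suffices to compare, for each edge, the single edge vector $\p_i - (\p_j + \bm_e L_0)$ that appears in the corresponding row.

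First I would record the edge vector for the $T$-gain framework. For $e$ directed from $v_i$ to $v_j$, the edge of $\tgT$ carries the gain $\bm_T(e) = \bm(v_i, T) + \bm_e - \bm(v_j, T)$, and the configuration is $\p'_k = \p_k + \bm_T(v_k) L_0$, where $\bm_T(v_k)$ denotes the $T$-potential of $v_k$ (the net gain on the tree path from the root to $v_k$). The edge vector in the row of $\R_0(\tgT, \p')$ is then $\p'_i - (\p'_j + \bm_T(e) L_0)$.

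Next I would substitute these definitions and simplify. The $T$-potential contributions $\bm_T(v_i) L_0$ and $\bm_T(v_j) L_0$ inherited from $\p'_i$ and $\p'_j$ cancel exactly against the matching terms inside $\bm_T(e) L_0$, collapsing the expression to $\p_i - (\p_j + \bm_e L_0)$, which is precisely the edge vector of $\pofw$. Since this identity holds for every edge, and loops and multiple edges are handled independently (each row is computed from its own edge in isolation), the row of $\R_0(\tgT, \p')$ associated to $e$ coincides with the row of $\R_0\pofw$ associated to $e$. Hence the two matrices are identical, and in particular $\rank \R_0\pofw = \rank \R_0(\tgT, \p')$.

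The computation is genuinely short, so the difficulty is conceptual bookkeeping rather than any real obstacle: one must keep $\bm_T(v_i)$ (a $T$-potential) distinct from the $T$-gain $\bm_T(e)$ on an edge, and keep the edge orientation and the placement of $L_0$ consistent across the two frameworks. It is worth emphasizing that the conclusion is in fact stronger than the stated rank equality — the matrices are equal — which makes the advertised generic corollary immediate and dovetails with the combinatorial Theorem \ref{thm:TGainIsomorphic}: passing to $T$-gains merely re-selects the orbit representatives $(v_i, \bm_T(v_i))$ inside the common derived framework, so the collection of bar vectors, and therefore all the rigidity data, is unchanged.
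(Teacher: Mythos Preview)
Your proof is correct, and in fact cleaner than the paper's. Both arguments rest on the same one-line identity
\[
\p'_i - (\p'_j + \bm_T(e)) \;=\; \bigl(\p_i + \bm_T(v_i)\bigr) - \bigl(\p_j + \bm_T(v_j)\bigr) - \bigl(\bm_T(v_i) + \bm_e - \bm_T(v_j)\bigr) \;=\; \p_i - (\p_j + \bm_e),
\]
so the edge vector is unchanged. You draw the immediate conclusion: the two matrices are row-by-row identical, hence have equal rank. The paper instead packages the same computation inside a stress argument, fixing a row dependence $\omega$ of $\R_0\pofw$, expanding the column sum at each vertex, inserting and cancelling the $T$-potentials, and reading off that $\omega$ is a dependence of $\R_0(\tgT,\p')$; it then remarks that the argument reverses. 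The paper even prefaces its proof by saying that the $T$-gain procedure ``leaves the rows of the matrix unchanged,'' which is exactly your statement, so the detour through stresses is not needed. Your direct version has the advantage of making the stronger conclusion (equality of matrices, not merely of rank) explicit, which as you note feeds straight into the generic corollary.
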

The essence of the following argument is that the $T$-gain procedure changes the representatives of the vertices used in the rigidity matrix, which, together with the new gains, leaves the rows of the matrix unchanged. 

\begin{proof}
Let $T$ be a spanning tree in $\pog$. Each vertex $v_i$ of $G$ is labeled with a $T$-potential, which we denote $\bm(v_i, T) = \bm_T(i)$. The edge $e = \{v_i, v_j; \bm_e\}$ has $T$-gain 
\[\bm_T(e) = \bm_T(i) + \bm_e - \bm_T(j).\]
We know that the derived graphs $G^{\bm}$ and $G^{\bm_T}$ are isomorphic by Theorem \ref{thm:TGainIsomorphic}. For each vertex $v \in V$, we relabel the indices of the vertices in the fibre over $v$ according to the rule 
\[z \longrightarrow z - \bm_T(v).\]
In other words, the vertex $(v_{i}, z)$ in $G^{\bm}$, where $z \in \mathbb Z^d$ is mapped to the vertex $(v_{i}, z - \bm_T(i))$ in $G^{\bm_T}$.

Suppose that a set of rows is dependent in $\R_0\pofw$. Then there exists a vector of scalars, say $\omega = [\begin{array}{ccc} \omega_1 & \cdots & \omega_{|E|} \end{array}]$ such that 
\[\omega \cdot \R_0\pofw = 0.\]
As in the proof of affine invariance, for a particular vertex we consider the edges directed into and out from the vertex separately. That is, for a vertex $v_i \in V$,  let $E_+$ denote the set of edges directed {\it out} from the vertex $v_i$, and let $E_-$ denote the set of edges directed {\it into} the vertex $v_i$.  For each vertex $v_i \in V$ the column sum of $\R_0\pofw$ becomes 
\begin{equation}
\sum_{ e_{\alpha} \in E_+} \omega_{e_{\alpha}} (p_i - (p_j+m_{e_{\alpha}})) + 
\sum_{ e_{\beta} \in E_-} \omega_{e_{\beta}} (p_i - (p_k-m_{e_{\beta}})) = 0.
\label{eqn:tGain}
\end{equation}

Adding and subtracting $\bm_{T}(i)$ and $\bm_T(j)$ to the first summand of (\ref{eqn:tGain}), we obtain 
\[\sum_{ e_{\alpha} \in E_+} \omega_e \Big(\p_i - \p_j + \bm_T(i) - \bm_T(j) -[\bm_T(i) + \bm_e - \bm_T(j)]\Big),\]
which is equivalent to
\[\sum_{ e_{\alpha} \in E_+} \omega_e \Big(\p_i+ \bm_T(i)  - (\p_j + \bm_T(j)) -\bm_T(e)\Big).\]
Similarly, the second summand of (\ref{eqn:tGain}) becomes 
\[\sum_{ e_{\beta} \in E_-} \omega_e \Big(\p_i+ \bm_T(i)  - (\p_j + \bm_T(j)) +\bm_T(e)\Big).\]
Putting them together, (\ref{eqn:tGain}) becomes the column sum of the column of  $\R_0(\tgT, p')$ corresponding to the vertex $v_i$.  Hence this set of rows is dependent in $\R_0\pofwT$. The argument reverses for the converse. %
\end{proof}

This geometric result has the following generic corollary, which implies that if $\pofw$ is infinitesimally rigid for generic $p$, then $(\tgT, p)$ is also infinitesimally rigid for the same position $p$. The graphs pictured in Figure \ref{fig:TgainHomotopy} are an example. 

\begin{cor}
The periodic orbit graph $\pog$ is generically rigid on $\Tor^d$ if and only if $\tgT$ is generically rigid on $\Tor^d$. 
\label{cor:TgainsPreserveGeneric}
\end{cor}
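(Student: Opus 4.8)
The plan is to deduce this generic statement directly from the geometric rank equality just established in Theorem~\ref{thm:TgainsPreserveRigidity}, using the Modified Special Position Lemma (Corollary~\ref{lem:specialPositionMod}) to pass between ``some realization'' and ``all generic realizations.'' The key structural observation is that, by Theorem~\ref{thm:fixedMatrixRank} together with Lemma~\ref{lem:specialPosition} and Corollary~\ref{lem:specialPositionMod}, a periodic orbit graph $\pog$ is generically rigid on $\Tor^d$ \emph{if and only if} there exists at least one realization $\p_0 : V \rightarrow \mathbb{R}^{d|V|}$ with $\rank \R_0(\pog,\p_0) = d|V|-d$. Indeed, the forward direction holds because generic realizations are dense and hence exist, and any generic realization attains this maximal rank; the reverse direction is precisely the content of Corollary~\ref{lem:specialPositionMod}. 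The same equivalence applies verbatim to $\tgT$. This reformulation replaces both occurrences of ``generically rigid'' by a purely existential rank condition over all of $\mathbb{R}^{d|V|}$.

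With this reformulation in hand, the argument becomes a short bijection check. First I would observe that the assignment $\p \mapsto \p'$ defined by $\p'_i = \p_i + \bm_T(v_i)$ is a translation of $\mathbb{R}^{d|V|}$ by the fixed vector $(\bm_T(v_1),\dots,\bm_T(v_{|V|}))$, and is therefore a bijection of $\mathbb{R}^{d|V|}$ onto itself. By Theorem~\ref{thm:TgainsPreserveRigidity} we have $\rank \R_0(\pog,\p) = \rank \R_0(\tgT,\p')$ for every $\p$. Consequently, there exists a realization $\p_0$ with $\rank \R_0(\pog,\p_0) = d|V|-d$ if and only if there exists a realization $\p_0'$ with $\rank \R_0(\tgT,\p_0') = d|V|-d$ (take $\p_0' = \p_0 + \bm_T$, and conversely $\p_0 = \p_0' - \bm_T$). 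Combining this with the existential reformulation of both graphs yields the desired equivalence of generic rigidity.

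The step I expect to be the only genuinely delicate point is the passage through off-torus realizations: even when $\p$ is a generic realization on the unit torus, the translated configuration $\p'_i = \p_i + \bm_T(v_i)$ typically does \emph{not} lie in $[0,1)^{d|V|}$, since the $T$-potentials $\bm_T(v_i)$ are integer vectors. This is exactly why the ordinary Special Position Lemma (Lemma~\ref{lem:specialPosition}) is insufficient, and why I would route the argument through the Modified Special Position Lemma (Corollary~\ref{lem:specialPositionMod}), whose whole purpose is to allow the witnessing configuration to live anywhere in $\mathbb{R}^{d|V|}$. Once this is flagged, the remainder is bookkeeping: the rank $d|V|-d$ is the maximal possible and corresponds to infinitesimal rigidity by Theorem~\ref{thm:fixedMatrixRank}, so transporting it across the bijection transports generic rigidity in both directions, completing the proof.
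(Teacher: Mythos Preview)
Your proposal is correct and follows essentially the same route as the paper: both arguments invoke Theorem~\ref{thm:TgainsPreserveRigidity} to transfer the rank at $\p$ to the rank at the translated configuration $\p'$, and then use the Modified Special Position Lemma (Corollary~\ref{lem:specialPositionMod}) precisely because $\p'$ need not lie in $[0,1)^{d|V|}$. Your version is somewhat more explicit than the paper's in spelling out the existential reformulation of generic rigidity and the bijection $\p \leftrightarrow \p'$, which makes both directions of the ``if and only if'' transparent, but the underlying mechanism is the same.
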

\begin{proof}
Let $p$ be a generic position of $\pog$ on $\Tor^d$. Let $\p'_i = \p_i + \bm_T(v_i)$. While $\p: V \longrightarrow \Unit^d$, $\p': V \longrightarrow \mathbb R^{d|V|}$. 
%However, if $\p$ is a generic position of the vertices on $\Unit^d$, then $\p'$ is a generic position of the vertices in $\mathbb R^{d|V|}$. 
By the Modified Special Position Lemma (\ref{lem:specialPositionMod}), the rank of the matrix $\R_0\pofwT$ is generically the same as the rank of the matrix $\R_0 (\langle G, \bm_T \rangle, \p')$, which, by Theorem \ref{thm:TgainsPreserveRigidity}, is the same as the rank of the matrix $\R_0\pofw$. %
\end{proof}

%$T$-gain equivalence preserves the nets gains on the cycles of $G$, and as such, $T$-gain equivalent graphs are homotopic. Figure \ref{fig:TgainHomotopy} illustrates this with an example. 

%%%%%
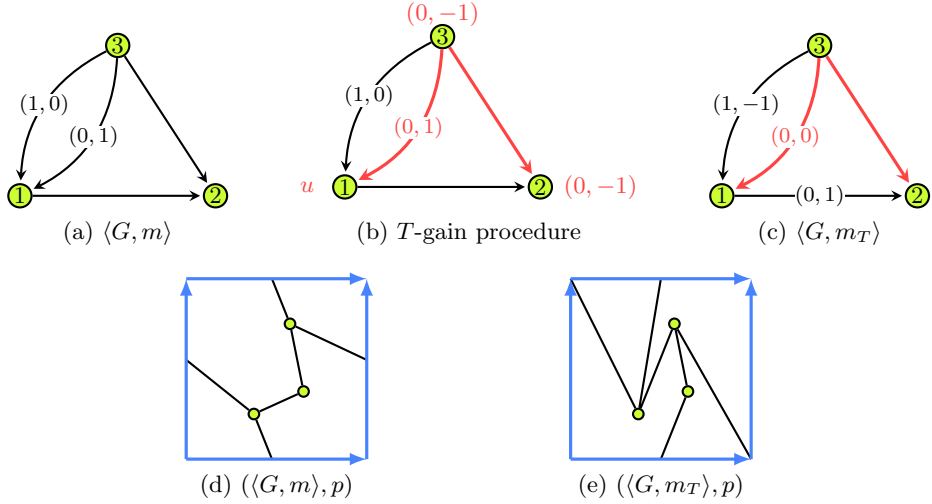
\begin{figure}\begin{center}
\subfloat[$\pog$]{\begin{tikzpicture}[->,>=stealth,shorten >=1pt,auto,node distance=2.8cm,thick, font=\footnotesize] 
\tikzstyle{vertex1}=[circle, draw, fill=couch, inner sep=.5pt, minimum width=3.5pt, font=\footnotesize]; 
\tikzstyle{vertex2}=[circle, draw, fill=melon, inner sep=.5pt, minimum width=3.5pt, font=\footnotesize]; 
\tikzstyle{voltage} = [fill=white, inner sep = 0pt,  font=\scriptsize, anchor=center];

	\node[vertex1] (1) at (-1.3,0)  {$1$};
	\node[vertex1] (2) at (1.3,0) {$2$};
	\node[vertex1] (3) at (0, 2) {$3$};
		\draw[thick] (1) --  (2);
		\draw[thick] (3) --  (2);
	\draw[thick] (3) edge  [bend right]  node[voltage] {$(1,0)$} (1);
	\draw[thick] (3) edge  [bend left] node[voltage] {$(0,1)$} (1);
\end{tikzpicture}}\hspace{.7cm}
\subfloat[$T$-gain procedure]{\begin{tikzpicture}[->,>=stealth,shorten >=1pt,auto,node distance=2.8cm,thick, font=\footnotesize] 
\tikzstyle{vertex1}=[circle, draw, fill=couch, inner sep=.5pt, minimum width=3.5pt, font=\footnotesize]; 
\tikzstyle{vertex2}=[circle, draw, fill=melon, inner sep=.5pt, minimum width=3.5pt, font=\footnotesize]; 
\tikzstyle{voltage} = [fill=white, inner sep = 0pt,  font=\scriptsize, anchor=center];	
	
		\node[vertex1] (1) at (-1.3,0)  {$1$};
	\node[vertex1] (2) at (1.3,0) {$2$};
	\node[vertex1] (3) at (0, 2) {$3$};
		\draw[thick] (1) --  (2);
		\draw[very thick, melon] (3) --  (2);
	\draw[thick] (3) edge  [bend right]  node[voltage] {$(1,0)$} (1);
	\draw[very thick, melon] (3) edge  [bend left] node[voltage] {$(0,1)$} (1);
	
	\node[melon] at (0, 2.3) {$(0,-1)$};
	\node[melon] at (-1.8, 0) {$u$};
	\node[melon] at (2.1, 0) {$(0, -1)$};
	
\end{tikzpicture}}\hspace{.7cm}
\subfloat[$\tgT$]{\begin{tikzpicture}[->,>=stealth,shorten >=1pt,auto,node distance=2.8cm,thick, font=\footnotesize] 
\tikzstyle{vertex1}=[circle, draw, fill=couch, inner sep=.5pt, minimum width=3.5pt, font=\footnotesize]; 
\tikzstyle{vertex2}=[circle, draw, fill=melon, inner sep=.5pt, minimum width=3.5pt, font=\footnotesize]; 
\tikzstyle{voltage} = [fill=white, inner sep = 0pt,  font=\scriptsize, anchor=center];

\node[vertex1] (1) at (-1.3,0)  {$1$};
	\node[vertex1] (2) at (1.3,0) {$2$};
	\node[vertex1] (3) at (0, 2) {$3$};
		\draw[thick] (1) -- node[voltage] {$(0,1)$} (2);
		\draw[very thick, melon] (3) --  (2);
	\draw[thick] (3) edge  [bend right]  node[voltage] {$(1,-1)$} (1);
	\draw[very thick, melon] (3) edge  [bend left] node[voltage] {$(0,0)$} (1);
\end{tikzpicture}}\\

%G^m pictures
\subfloat[$\pofw$]{\begin{tikzpicture}[thick,scale=1.2] 
\tikzstyle{vertex1}=[circle, draw, fill=couch, inner sep=0pt, minimum width=4pt]; 
\tikzstyle{vertex2}=[circle, draw, fill=mango, inner sep=0pt, minimum width=4pt]; 

\node[vertex1] (1) at (-.25,-.5) {};
\node[vertex1] (2) at (.3,-.25) {};
\node[vertex1] (3) at (.15,.5) {};

\draw[thick] (1) -- (2) -- (3) (3) -- (1,0.1) (-1,0.1) -- (1) (3) -- (-.05,1) (-.05,-1)--(1);
%\draw[thick] (5) -- (.4, 1) (.4, -1.5) -- (4) (6) -- (-1, .2) (1.5, .2) -- (2) (3) -- (1.5, -.6) (-1, -.6) -- (1);
\pgfsetarrowsend{latex} 
	\draw[sky, very thick] (-1, -1) -- (1, -1); 
	\draw[sky, very thick] (-1, 1) -- (1, 1);
	\draw[sky, very thick] (-1, -1) -- (-1, 1);
	\draw[sky, very thick] (1, -1) -- (1, 1);
\pgfsetarrowsend{} 
\end{tikzpicture}}
\hspace{1in}
\subfloat[$(\tgT,p)$]{\begin{tikzpicture}[thick,scale=1.2] 
\tikzstyle{vertex1}=[circle, draw, fill=couch, inner sep=0pt, minimum width=4pt]; 
\tikzstyle{vertex2}=[circle, draw, fill=mango, inner sep=0pt, minimum width=4pt]; 

\node[vertex1] (1) at (-.25,-.5) {};
\node[vertex1] (2) at (.3,-.25) {};
\node[vertex1] (3) at (.15,.5) {};

\draw[thick] (1) -- (3) -- (2) (1) -- (0,1) (0,-1) -- (2) (3) -- (1, -1) (-1,1) -- (1);
%\draw[thick] (5) -- (.4, 1) (.4, -1.5) -- (4) (6) -- (-1, .2) (1.5, .2) -- (2) (3) -- (1.5, -.6) (-1, -.6) -- (1);
\pgfsetarrowsend{latex} 
	\draw[sky, very thick] (-1, -1) -- (1, -1); 
	\draw[sky, very thick] (-1, 1) -- (1, 1);
	\draw[sky, very thick] (-1, -1) -- (-1, 1);
	\draw[sky, very thick] (1, -1) -- (1, 1);
\pgfsetarrowsend{} 
\end{tikzpicture}} \end{center}
\caption{The $T$-gain procedure used on a graph $\pog$ to form $\tgT$ is shown in (a) -- (c). The periodic orbit frameworks on $\Tor^2$ are shown in (d) and (e), corresponding to the periodic orbit graphs in (a) and (c) respectively. \label{fig:TgainHomotopy}}
\end{figure}

%%%% Walter's Theorem %%%%
\subsection{Gain assignments and infinitesimal rigidity}
\label{sec:aKnownResult}
The following theorem says that given a graph $G$ with certain combinatorial properties, we can always find an appropriate gain assignment $\bm$ and geometric realization $\p$ to yield a minimally rigid framework on $\Tor^d$.

\begin{thm}[Whiteley, \cite{UnionMatroids}]
For a multigraph $G$, the following are equivalent:
\begin{enumerate}[(i)]
\item $G$ satisfies $|E| = d|V|-d$, and every subgraph $G' \subseteq G$ satisfies $|E'| \leq d|V'| - d$, 
\item $G$ is the union of $d$ edge-disjoint spanning trees,
\item For some gain assignment $m$ and some realization $p$, the framework $\pofw$  is minimally  rigid on $\Tor^d$.
\end{enumerate}
\label{waltersThm}
\end{thm}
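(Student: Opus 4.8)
The plan is to prove the cycle of implications (i)$\Rightarrow$(ii)$\Rightarrow$(iii)$\Rightarrow$(i), together with the easy converse (ii)$\Rightarrow$(i), which establishes all three equivalences. For (i)$\Leftrightarrow$(ii) I would appeal to the Nash-Williams--Tutte theorem, equivalently the matroid union theorem applied to $d$ copies of the cycle matroid of $G$ (the content of \cite{UnionMatroids}). The sparsity hypothesis $|E'| \le d(|V'|-1)$ for every subgraph $G'$ is exactly Nash-Williams' condition for $E$ to be coverable by $d$ forests; note first that this condition forbids loops, since a single vertex carrying a loop gives a subgraph with $|V'|=1$, $|E'|=1$, violating $|E'| \le d|V'|-d = 0$. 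Given a cover of $E$ by $d$ forests, the exact global count $|E| = d(|V|-1)$, together with the fact that each forest has at most $|V|-1$ edges, forces every forest to be a spanning tree, yielding (ii). Conversely, if $G$ is the union of $d$ edge-disjoint spanning trees then $|E| = d(|V|-1)$, and each tree restricts on any $G'$ to a forest with at most $|V'|-1$ edges, so $|E'| \le d(|V'|-1)$, giving (i).

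The heart of the argument is (ii)$\Rightarrow$(iii), where I would use the following explicit decoupling construction. Write $E = T_1 \sqcup \cdots \sqcup T_d$ as a partition into edge-disjoint spanning trees, and define a gain assignment by declaring $\bm_e = -\be_k$ for each edge $e \in T_k$, where $\be_k$ is the $k$th standard basis vector. By the Modified Special Position Lemma (Corollary \ref{lem:specialPositionMod}) it suffices to exhibit a single realization $\p_0 \in \mathbb R^{d|V|}$ at which $\R_0(\langle G,\bm\rangle,\p_0)$ has rank $d|V|-d$, and I would take the degenerate position $\p_0 \equiv 0$. At this position the row of the rigidity matrix for an edge $e = \{v_i,v_j;\bm_e\} \in T_k$ has entry $-\bm_e = \be_k$ in the block of $v_i$ and $\bm_e = -\be_k$ in the block of $v_j$, and is zero elsewhere; in particular its only nonzero entries lie in the coordinate-$k$ columns.

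Consequently, after grouping the $d|V|$ columns by coordinate and the rows by tree, $\R_0(\langle G,\bm\rangle,\p_0)$ becomes block diagonal with $d$ blocks. The $k$th block has rows indexed by $T_k$ and columns indexed by the $k$th coordinate of each vertex, and equals the signed vertex-edge incidence matrix of the tree $T_k$. Since $T_k$ is connected on $|V|$ vertices, this incidence matrix has rank $|V|-1$, so the full matrix has rank $d(|V|-1) = d|V|-d$. Hence $(\langle G,\bm\rangle,\p_0)$, and therefore every generic realization on $\Tor^d$, is infinitesimally rigid (Theorem \ref{thm:fixedMatrixRank}); combined with $|E| = d|V|-d$ this is minimal rigidity, proving (iii). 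Finally, (iii)$\Rightarrow$(i) is immediate from the periodic Maxwell count, Corollary \ref{cor:perMaxwell}.

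The main obstacle is getting the decoupling in (ii)$\Rightarrow$(iii) exactly right: one must choose the gains so that each tree's edges contribute to a single coordinate direction, verify that the degenerate position $\p_0 \equiv 0$ is admissible (which is precisely why the Modified Special Position Lemma, allowing realizations anywhere in $\mathbb R^{d|V|}$ rather than distinct points on $\Tor^d$, is essential here), and correctly identify each coordinate block as a spanning-tree incidence matrix of rank $|V|-1$. The remaining implications are either classical combinatorics or direct citations of results already established in the paper.
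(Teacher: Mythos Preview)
Your proposal is correct and follows essentially the same approach that the paper attributes to Whiteley's original argument: the paper does not give a full proof but describes it as assigning basis-vector gains to each of the $d$ edge-disjoint spanning trees (see the remark after the theorem and the discussion in Section~\ref{sec:nConstructive}), which is precisely your construction for (ii)$\Rightarrow$(iii). Your use of the degenerate position $\p_0\equiv 0$ together with the Modified Special Position Lemma to reduce the rank computation to a block-diagonal matrix of tree incidence matrices is a clean way to make the sketch rigorous, and the sign choice $-\be_k$ versus the paper's $\be_k$ is immaterial.
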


%\begin{proof}
%The equivalence of {\it (i)} and {\it (ii)} is due to Nash-Williams \cite{NashWilliams} and Tutte \cite{Tutte}. We sketch the proof of the equivalence of {\it (ii)} and {\it (iii)}. 
%
% {\it (ii)} $\rightarrow$ {\it (iii)} Suppose $G$ is the union of $d$ edge-disjoint spanning trees. Let all vertices be assigned the same position $p$, and let $m$ be the gain assignment which assigns the $k$-th basis vector to all edges in the $k$-th tree, $k=1, \dots, d$. That is, $m(e) = (0, \dots, 0, 1, 0, \dots, 0)$, where the $k$-th entry is non-zero. The non-zero entries in the resulting rigidity matrix correspond only to the gains $m$. This is the matrix of a $d$-frame (a generalization of the rigidity matrix), and the rows of each spanning tree are independent in this matrix, by the arguments of \cite{UnionMatroids}. 
% 
% {\it (iii)} $\rightarrow$ {\it (ii)} is a consequence of Corollary \ref{cor:perMaxwell}.%
%%
%\end{proof}

The proof from \cite{UnionMatroids}  constructs some gain assignments which are sufficient for infinitesimal rigidity (in fact, it produces an infinite space of such gains).  In a nutshell it says that given any graph satisfying the necessary conditions of Corollary \ref{cor:perMaxwell}, we can define a gain assignment, with basis vector gains, that will be infinitesimally rigid on $\Tor^2$. It is true, however, that these are not the only infinitesimally rigid frameworks. The question of interest then becomes: 
\begin{question}
When is a periodic orbit graph $\pog$ generically rigid on $\Tor^d$?
\end{question}
The goal of a subsequent paper will be devoted to broadening the scope of Theorem \ref{waltersThm} for periodic orbit frameworks on the two-dimensional fixed torus, and to characterize more precisely the interactions between combinatorics, geometry and topology in defining rigid frameworks. In a recent paper \cite{BorceaStreinuII} the authors offer an improvement of \ref{waltersThm} for the flexible torus. 

As previously noted, the approach of Borcea and Streinu \cite{periodicFrameworksAndFlexibility} {\it does not} consider the gains to be part of the combinatorial information of a periodic framework. Instead they work with the notion of generic edge directions, which involves both the gain and the position of the vertices.  We will consider the gains of a periodic orbit framework to be part of the combinatorial information of the graph, and will characterize the rigidity of periodic orbit frameworks for {\it all} gains. 

Malestein and Theran \cite{Theran} do consider gain graphs. In their language, our gain graphs are ``coloured graphs". We now turn to the task of identifying necessary conditions on the gain assignments for infinitesimal rigidity on the fixed torus. 

\section{Necessary conditions on gains for rigidity}
\label{sec:necessary}
%\subsection{Necessary conditions for $d$-periodic frameworks}
\subsection{Necessary conditions for infinitesimal rigidity on $\Tor^d$}
In this section we establish necessary conditions on the gains of a periodic orbit graph $\pog$ for it to be infinitesimally rigid on $\Tor^d$. Here is a preliminary necessary condition for infinitesimal rigidity on $\Tor^d$. Recall that for a gain graph $ \pog$ with cycle space $\C(G)$, the gain space $\gs(G)$ is the vector space (over $\mathbb Z$) spanned by the net gains on the cycles of $\C(G)$. 

\begin{thm} Let $\pog$ be a $d$-periodic orbit graph with $|E| = d|V|-d$. If $\pofw$ is infinitesimally rigid for some realization $\p$, then every subgraph $G' \subseteq G$ with $|E'| = d|V'| - d$ has $|\gs(G')| \geq d-1$. 
\label{thm:nDimConstructive}
\end{thm}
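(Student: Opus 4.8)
My plan is to reduce the statement to a claim about a single tight, independent subgraph, and then to manufacture a surplus of infinitesimal motions whenever the gain space is too small. Since $\pofw$ is infinitesimally rigid for some $\p$, the Special Position Lemma (\ref{lem:specialPosition}) makes it generically rigid, and because $|E| = d|V|-d$, Corollary \ref{cor:rowRankColumnRank} shows $\pog$ is independent at a generic $\p$. Any subgraph $G'$ with $|E'| = d|V'| - d$ then has its rows among the independent rows of $\R_0\pog$, so $\langle G', \bm'\rangle$ is independent; by Corollary \ref{cor:rowRankColumnRank} its space of infinitesimal motions therefore has dimension exactly $d|V'|-(d|V'|-d)=d$. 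I argue by contradiction: suppose $\dim \gs(G') = k \le d-2$, and I will exhibit more than $d$ independent motions.

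The key preparatory step is to run the $T$-gain procedure on $G'$ relative to a spanning tree $T$ of $G'$. Tree edges then carry gain $0$, while each non-tree edge carries the net gain of its fundamental cycle; since these fundamental gains generate $\gs(G')$, after the procedure every edge gain of $G'$ lies in the linear span $W := \gs(G')\otimes \R \subseteq \R^d$, with $\dim W = k$. By Corollary \ref{cor:TgainsPreserveGeneric} this leaves generic infinitesimal rigidity (and independence) unchanged, so I may assume outright that every gain of $G'$ lies in $W$.

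Now I exploit that $\dim W^{\perp} = d-k \ge 2$. Choose a nonzero skew-symmetric $d\times d$ matrix $S$ annihilating $W$ (equivalently, a nonzero skew-symmetric operator on $W^{\perp}$; the space of such $S$ has dimension $\binom{d-k}{2}\ge 1$). For each such $S$ set $u^{S}_i := \p_i S$. For an edge $e = \{v_i, v_j; \bm_e\}$ I compute, on $\Tor^d$ with $L_0=I$,
\[
(u^{S}_i - u^{S}_j)\cdot(\p_i - \p_j - \bm_e) = (\p_i-\p_j)\,S\,(\p_i-\p_j)^{T} - (\p_i-\p_j)\,S\,\bm_e^{T} = 0,
\]
the first term vanishing by skew-symmetry and the second because $\bm_e\in W$ and $S$ annihilates $W$ (so $S\bm_e^{T}=0$). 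Hence each $u^{S}$ is an infinitesimal motion of $\langle G', \bm'\rangle$ on $\Tor^d$ — a \emph{partial rotation} in the directions the gains fail to reach.

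It remains to see these motions are genuinely new. For generic $\p$ the assignment $S\mapsto u^{S}$ is injective and meets the $d$-dimensional space of infinitesimal translations only in $0$: if $\p_iS$ were constant in $i$ then $(\p_i-\p_j)S=0$ for all pairs, which for generic positions of $|V'|\ge 2$ vertices forces $S=0$. Thus the motion space of $\langle G',\bm'\rangle$ has dimension at least $d+\binom{d-k}{2}>d$, contradicting the count of $d$ established above. Therefore $\dim\gs(G')\ge d-1$. The main obstacle is precisely the $T$-gain reduction of the second paragraph: without first driving all edge gains into $W$, the cross term $(\p_i-\p_j)S\,\bm_e^{T}$ need not vanish and $u^{S}$ is not a motion, so that step is what makes the argument go through; verifying the genericity used for injectivity of $S\mapsto u^S$ (and discarding the degenerate $|V'|=1$ case) is the only remaining routine point. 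As a sanity check, for $d=2$ the statement asserts that a tight subgraph cannot have all gains trivial, which is exactly Proposition \ref{prop:zeroGains}.
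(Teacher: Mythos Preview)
Your argument follows essentially the same route as the paper: apply the $T$-gain procedure so that all gains of $G'$ lie in a low-dimensional subspace, then exhibit a rotation in the complementary directions as a non-trivial infinitesimal motion, contradicting the count forced by independence. The paper simply changes coordinates so the gains vanish on two coordinates and writes down one explicit planar rotation $v=(-p_2,p_1,0,\dots,0)$; your coordinate-free version via skew-symmetric $S$ with $W\subseteq\ker S$ is cleaner and in fact produces the full $\binom{d-k}{2}$-dimensional family of such rotations, which is exactly what the paper goes on to use in the proof of Theorem~\ref{thm:dNecessary}.

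One small overclaim: for $|V'|$ small relative to $d$ the map $S\mapsto u^S$ need not be injective, nor need its image meet the translations only in $0$ (e.g.\ with two vertices there can be nonzero $S$ with $(\p_1-\p_2)S=0$). But you only need a single $u^S$ that is not a translation to force the motion space past dimension $d$, and for generic $\p$ with $|V'|\ge 2$ the projection of $\p_1-\p_2$ onto $W^\perp$ is nonzero, so the linear condition $(\p_1-\p_2)S=0$ cuts out a proper subspace of your $S$-space; any $S$ outside it does the job. With that adjustment the argument is complete.
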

 
 \begin{proof}
Suppose $G' \subseteq G$ has $|\gs(G')| = k$, where $k < d-1$. Performing the $T$-gain procedure if necessary, the gains of the edges of $G'$ are zero on at least two coordinates, say $x$ and $y$. The basic idea of this proof is that such a framework is disconnected in the $xy$-plane, and we can apply a rotation in this plane. Suppose without loss of generality that all edges of $\pog$ have gains $\bm_e = (0,0,m_{e3}, \dots, m_{ed}) \in \mathbb Z^d$. Let $\p = (p_1, p_2, \dots, p_d)$ be a point in $\Tor^d$. Let $v = (-p_2, p_1, 0, \dots, 0)$. Then 
$$v\cdot (\p_i - (\p_j + \bm_eL_0)) = (-p_2, p_1, 0, \dots, 0)\cdot (\p_{i1} - \p_{j1}, \p_{i2} - \p_{j2}, \dots )$$ 
which is a rotation in the plane of the first two coordinates, of a finite (i.e. not periodic) framework. This corresponds to a non-trivial motion of $\pofw$, since it represents a rotation within the unit cell. % 
\end{proof}

 As motivation for the next result, consider an infinitesimally rigid framework $\pofw$ on the $3$-dimensional fixed torus $\Tor^3$ with $|E|=3|V|-3$. The edges of $E$ are therefore independent. By Theorem \ref{thm:nDimConstructive}, every fully-counted subgraph $G' \subseteq G$ satisfying $|E'| = 3|V'| - 3$ has $|\gs(G')| \geq 2$. On the other hand, by Proposition \ref{prop:zeroGains}, any set of edges $E'' \subset E$ with $E'' > 3|V''| - 6$ and $|\gs(E'')| = 0$ is dependent. Therefore, there must be additional conditions on subsets of edges $E'' \subset E$ with $|E''| = 3|V''| - 5$ and $|E''| = 3|V''| - 4$. The following theorem provides necessary conditions on these {\it intermediate} subsets of edges. This is related to the work of Malestein and Theran \cite{Theran} who use a similar ``rank-graded sparsity" idea in their characterization of generic rigidity for $2$-dimensional frameworks on the flexible torus. 
\begin{thm}
Let $\pog$ be a minimally rigid framework on $\Tor^d$. Then for all subsets of edges $Y \subseteq E$, 
\begin{equation}
|Y| \leq d|V(Y)| - {d+1 \choose 2} + \sum_{i=1}^{|\gs(Y)|} (d-i).
\label{eq:dNecessary}
\end{equation}
\label{thm:dNecessary}
\end{thm}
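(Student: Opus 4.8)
The plan is to convert the edge-counting inequality into a lower bound on the dimension of the space of infinitesimal motions of the subframework carried by $Y$, and then to exhibit enough explicit motions. First I would fix a generic realization $\p$ on $\Tor^d$: by the Special Position Lemma (\ref{lem:specialPosition}), infinitesimal rigidity of $\pog$ persists generically, so $E$ is independent and hence (as a subset in the rigidity matroid) every $Y\subseteq E$ is independent as well. Thus the rows of $\R_0\pofw$ indexed by $Y$ are linearly independent, giving $|Y| = \rank \R_0(Y,\p) = d|V(Y)| - \dim M(Y)$, where $M(Y)$ is the kernel, i.e. the space of infinitesimal motions of the subframework on $V(Y)$ read inside $\mathbb R^{d|V(Y)|}$. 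Writing $k = |\gs(Y)|$ and using the binomial identity $\binom{d+1}{2} - \sum_{i=1}^{k}(d-i) = d + \binom{d-k}{2}$, inequality (\ref{eq:dNecessary}) is seen to be \emph{exactly} equivalent to $\dim M(Y) \ge d + \binom{d-k}{2}$. So it suffices to produce that many independent motions.

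Next I would normalize the gains of $Y$. Applying the subgraph $T$-gain procedure to a spanning tree of $Y$ (extended to a spanning tree of $G$) makes every edge of $Y$ carry a gain lying in $\gs(Y)$; by Theorem \ref{thm:TgainsPreserveRigidity} this alters the vertex representatives but not the rank of the $Y$-block. By the affine invariance of Theorem \ref{thm:affineInvariance} I may then apply an invertible linear change of coordinates carrying the $k$-dimensional real span of $\gs(Y)$ onto the span of the first $k$ standard basis vectors. Both operations preserve rank and genericity, so I may assume each edge of $Y$ has gain $\bm_e=(m_{e,1},\dots,m_{e,k},0,\dots,0)$. Now for any skew-symmetric $d\times d$ matrix $S$ supported on the last $d-k$ rows and columns, set $u_i=\p_i S$. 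For an edge $e=\{v_i,v_j;\bm_e\}\in Y$, the motion condition reads $(u_i-u_j)\cdot(\p_i-\p_j-\bm_e) = (\p_i-\p_j)S\cdot(\p_i-\p_j) - (\p_i-\p_j)S\cdot \bm_e$; the first term vanishes since $S$ is skew, and the second vanishes because $(\p_i-\p_j)S$ is supported on the last $d-k$ coordinates where $\bm_e$ is zero. Hence each such $S$ (a $\binom{d-k}{2}$-dimensional family) gives an element of $M(Y)$, and together with the $d$ infinitesimal translations we obtain $d+\binom{d-k}{2}$ motions.

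Finally I would check these motions are linearly independent. A relation $t + \p_i S = 0$ for all $i\in V(Y)$ forces $t=0$ on the first $k$ coordinates and a vanishing translation-plus-rotation of the projected points on the last $d-k$ coordinates; provided the projections of the $V(Y)$-vertices to the last $d-k$ coordinates affinely span $\mathbb R^{d-k}$, this forces $S=0$ and $t=0$, so $\dim M(Y)\ge d+\binom{d-k}{2}$ and the bound follows. For generic $\p$ this spanning holds whenever $|V(Y)|\ge d-k+1$; the remaining small cases either have $\binom{d-k}{2}=0$ (so only the $d$ always-independent translations are needed) or force $Y$ to be dependent, contradicting $Y\subseteq E$. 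This independence step — securing the general position of the projected vertices so that the full $\binom{d-k}{2}$-dimensional rotation block survives — is the main obstacle; the motion computation and the binomial identity are routine once it is in place.
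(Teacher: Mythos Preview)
Your approach is essentially the paper's: use the $T$-gain procedure to push the gains of $Y$ into a $k$-dimensional coordinate subspace (with $k=|\gs(Y)|$), and then exhibit the $\binom{d-k}{2}$ planar rotations in the complementary coordinates as infinitesimal motions of $Y$. The paper packages this as a proof by contradiction (too few gain directions forces strictly more than $\binom{d-k}{2}$ non-trivial motions, exceeding the deficit $\binom{d}{2}-\ell$), whereas you argue directly by rewriting the inequality as $\dim M(Y)\ge d+\binom{d-k}{2}$ via the identity $\binom{d+1}{2}-\sum_{i=1}^k(d-i)=d+\binom{d-k}{2}$ and then constructing that many motions with skew-symmetric matrices. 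Your use of affine invariance to align $\gs(Y)$ with a coordinate subspace is a point the paper leaves implicit. The two arguments are logically equivalent and yours is the more explicit one.

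There is one concrete step that fails. In your final paragraph you assert that the residual case $|V(Y)|\le d-k$ with $\binom{d-k}{2}>0$ ``forces $Y$ to be dependent.'' It does not: take $d=3$ and let $Y$ be a single edge. Then $|V(Y)|=2$, the cycle space of $Y$ is trivial so $k=0$, and $d-k+1=4>2$; yet a single edge is plainly independent. In fact this example violates inequality~(\ref{eq:dNecessary}) itself, since it would read $1\le 3\cdot 2-\binom{4}{2}+0=0$. So the difficulty you are encountering in the small-$|V(Y)|$ regime is not a gap you can close---it reflects a missing hypothesis in the statement (one needs something like $|V(Y)|\ge d-k+1$, or equivalently the right-hand side should be capped at $d|V(Y)|-d$). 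The paper's own proof glosses over exactly this point: its opening line that the case $|Y|\le d|V(Y)|-\binom{d+1}{2}$ ``holds trivially'' is only informative when that quantity is nonnegative, and its contradiction argument never verifies that the planar rotations restrict to \emph{independent} motions on $V(Y)$. Once one assumes enough vertices so that the projections to the last $d-k$ coordinates affinely span, your independence check goes through and the proof is complete.
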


In essence this says that we can add edges beyond what would normally be independent, provided that we also add cycles with non-trivial gains.
Maxwell's condition for finite frameworks in dimension $d$ says that an isostatic framework must satisfy $|E| = d|V| - {d+1 \choose 2}$, and $|E'| \leq d|V'| - {d+1 \choose 2}$ for all induced subgraphs $G' \subseteq G$. Analogously,  a minimally rigid periodic framework in dimension $d$ will have $|E| = d|V|-d$ and induced subgraphs will satisfy $|E'| \leq d|V'| - d$ (Corollary \ref{cor:perMaxwell}).

In addition, we already showed that for a minimally rigid framework $\pog$ on $\Tor^d$:
\begin{enumerate}[(a)]
	\item all induced subgraphs with $|E'| = d|V'| - d$ must have $|\gs(G')| \geq d-1$ (Theorem \ref{thm:nDimConstructive})
	\item any connected subset of edges $Y \subset E$ with $|Y| > d|V(Y)| - {d+1 \choose 2}$ must have $|\gs(Y)| > 0$. (Proposition \ref{prop:zeroGains})
\end{enumerate}

Theorem \ref{thm:dNecessary} extends these results.   We make use of the following simple fact:
\begin{equation}
\textrm{\bf Fact: \ } {d \choose 2} - \sum_{i = 1}^k (d-i) = {d-k \choose 2}.
\label{eqn:Useful}
\end{equation}

%\begin{proof}[Proof of the Fact]
%\begin{eqnarray*}
%{d \choose 2} - \sum_{i=1}^{k} (d-i)  & = & {d \choose 2} - \left(kd -\sum_{i=1}^{k} i\right)\\
%& = & {d \choose 2} - \left(kd -{k+1 \choose 2}\right) \\
%& = & \frac{d(d-1)}{2} - \left(\frac{2kd - k^2 - k}{2}\right)\\
%& = & \frac{(d-k)(d-k-1)}{2}\\
%& = & {d-k \choose 2}.
%\end{eqnarray*}
%\end{proof}

\begin{proof}[Proof of Theorem \ref{thm:dNecessary}]
Let $\pog$ be generically minimally rigid on $\Tor^d$, and let $Y \subseteq E$ be a subset of edges. First note that for any subset $Y$ with $|Y| \leq d|V| - {d+1 \choose 2}$, Equation (\ref{eq:dNecessary}) holds trivially. If $|Y| = d|V(Y)|-d$, then the edges of $Y$ are the edges of an induced subgraph, and we must have $|\gs(Y)| \geq d-1$ by (a).

Suppose then that $|Y| = d|V(Y)| - {d+1 \choose 2} + \ell$, where $0 < \ell < {d \choose 2}$. Then for some  $0 < k < d-2$, 
\[\sum_{i=1}^{k} (d-i) \leq \ell < \sum_{i=1}^{k+1}(d-i).\]
Toward a contradiction, suppose that $|\gs(Y)| < k$. We apply the $T$-gain procedure to the edges $Y$, and we obtain gains that are $0$ on more than $d-k$ coordinates. By the arguments of the proof of Theorem \ref{thm:nDimConstructive} for each pair zero coordinates, we can obtain a rotation in that plane. Therefore the space of non-trivial infinitesimal motions of the subset $Y$ on $\Tor^d$ is {\it strictly} larger than ${d-k \choose 2}$. Letting $\mathcal I_k(Y)$ denote the space of non-trivial infinitesimal motions of the subset $Y$, we have shown that $$|\mathcal I_k(Y)| > {d-k \choose 2}.$$

However, since $|Y| < d|V(Y)| - d$, we expect some non-trivial infinitesimal motions of the edges $Y$ on $\Tor^d$.  Since $\pog$ is generically rigid, these motions will disappear when more edges are added to the subset $Y$. How many non-trivial infinitesimal motions would we expect?  An isostatic finite framework with $|E| = d|V| - {d+1 \choose 2}$ has ${d \choose 2}$ non-trivial infinitesimal motions when realized as a periodic orbit framework. Let $\mathcal I(Y)$ denote the space of non-trivial infinitesimal motions we predict based only on the number of edges. Since $\pog$ is minimally rigid, and $|Y| = d|V(Y)| - {d+1 \choose 2} + \ell$, the space of non-trivial infinitesimal motions has dimension $|\mathcal I(Y)| = {d \choose 2} - \ell$.  Now
\begin{eqnarray*}
|\mathcal I(Y)| & = & {d \choose 2} - \ell\\
	 & \leq & {d \choose 2} - \sum_{i=1}^{k} (d-i)\\
 	& = & {d - k \choose 2}\ \ \textrm{by (\ref{eqn:Useful})}\\
	& < & |\mathcal I_k(Y)|.
\end{eqnarray*}
Hence the space of non-trivial infinitesimal motions we expect based on the deficit of edges is smaller than the space predicted by the deficit in the dimension of $\gs(Y)$, which is the contradiction. %
\end{proof}

%%%% Constructive gains in HIGHER DIMENSIONS %%%%%
\subsection{Constructive gain assignments for $d$-periodic orbit frameworks}
\label{sec:nConstructive}

\begin{verse}
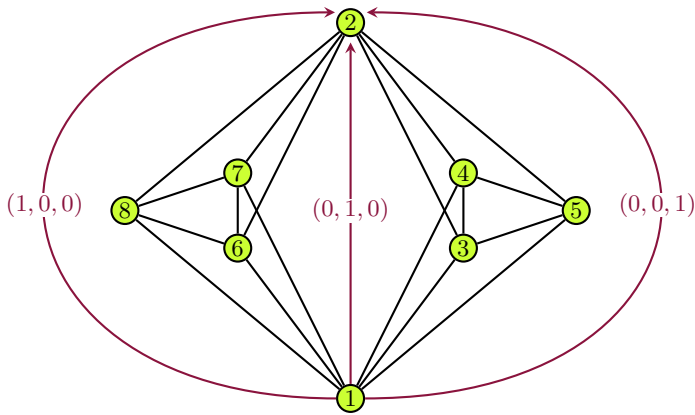
\begin{figure}\begin{center} 
\begin{tikzpicture}[thick, font=\footnotesize]
\tikzstyle{vertex1}=[circle, draw, fill=couch, inner sep=1pt, minimum width=3.5pt]; 
\tikzstyle{vertex2}=[circle, draw, fill=lips, inner sep=1pt, minimum width=3.5pt]; 
\tikzstyle{vertex3}=[circle, draw, fill=melon, inner sep=1pt, minimum width=3.5pt]; 
\tikzstyle{vertex4}=[circle, draw, fill=bluey, inner sep=1pt, minimum width=3.51pt]; 
\tikzstyle{voltage} = [fill=white, inner sep = 0pt,  font=\footnotesize, anchor=center];

\node[vertex1] (1) at (0, 0){$1$}; 
\node[vertex1] (2) at (0, 5) {$2$};
\node[vertex1] (3) at (1.5, 2) {$3$};
\node[vertex1] (4) at (1.5, 3) {$4$};
\node[vertex1] (5) at (3, 2.5) {$5$};
\node[vertex1] (6) at (-1.5, 2) {$6$};
\node[vertex1] (7) at (-1.5, 3) {$7$};
\node[vertex1] (8) at (-3, 2.5) {$8$};

\draw (3) -- (4) -- (5) -- (3);
\draw (2) -- (3) (2) -- (4) (2) -- (5);
\draw (1) -- (3) (1) -- (4) (1) -- (5);

\draw (6) -- (7) -- (8) -- (6);
\draw (2) -- (6) (2) -- (7) (2) -- (8);
\draw (1) -- (6) (1) -- (7) (1) -- (8);

%\node[below=1.9cm,text width=2cm] at (8) {$|E| = 3|V| - 6$};
\draw[lips, ->, >=stealth, shorten >=2pt] (1) -- node[voltage] {$(0,1,0)$} (2);
\draw[lips, ->, >=stealth, shorten >=2pt] (1) to [controls=+(180:5.3) and +(180:5.3)] node[voltage] {$(1,0,0)$}  (2);
\draw[lips, ->, >=stealth,shorten >=10pt] (1) to [controls=+(0:5.5) and +(0:5.5)] node[voltage] {$(0,0,1)$}  (2);
%\node[below=1.9cm,text width=2cm, fill=white, text= lips] at (8) {$|E| = 3|V| - 3$};
\end{tikzpicture}
\caption{An example of a generically flexible periodic orbit graph on $\Tor^3$ with a constructive gain assignment. The black edges form the $3|V| - 6$ ``double bananas" graph, and here we give them gain $(0,0,0)$. The three coloured edges provide the constructive gains. This graph is flexible on $\Tor^3$. \label{fig:doubleBananas}}
\end{center}\end{figure} \end{verse}

We say that $\pog$ has a {\it constructive gain assignment} \index{constructive gain assignment!on $\Tor^d$} if the gain assignment $m$ of $\pog$ is such that (\ref{eq:dNecessary}) is satisfied for every subset $Y$ of edges of $\pog$.  In a subsequent paper, we will demonstrate that a constructive gain assignment on a periodic orbit framework is also sufficient for generic rigidity when $d=2$ (and $d=1$). Unfortunately, the same is not true in higher dimensions. For example, when $d=3$, we can realize the ``double banana" graph as part of a $3|V| - 3$ graph with a constructive gain assignment, as seen in Figure \ref{fig:doubleBananas}. This graph is flexible despite having a constructive gain assignment. The two ``bananas" consisting of all the edges without gains can be rotated independently about the line through vertices 1 and 2. In this way, the sufficiency of any condition for the rigidity of periodic frameworks on $\Tor^d$ depends directly on a characterization of $d$-dimensional finite rigidity. This problem is open for $d >2$.  In the case that $d=1$ or $d=2$, however, combinatorial characterizations of finite rigidity exist, and we will show in a subsequent paper that Theorem \ref{thm:dNecessary} is both necessary and sufficient for infinitesimal rigidity when $d = 1$ or $2$. 

There are, however, gain assignments on the edges of this graph that will produce infinitesimally rigid frameworks on $\Tor^3$. Such frameworks will involve the ``wrapping" of some of the edges of the bananas \index{double bananas!periodic} around the torus. For example, one possible gain assignment is given by the proof of Theorem \ref{waltersThm} (see Section \ref{sec:aKnownResult}), in which the edges of each of the $3$ edge-disjoint spanning trees are assigned the gains $(1,0,0), (0,1,0)$ and $(0,0,1)$ respectively. A similar idea is presented in \cite{BorceaStreinuII}. Notice also that the particular gain assignment produced in the proof of Whiteley's Theorem \ref{waltersThm} is constructive. 

\section{Further work}
\label{sec:furtherWork}
\subsection{Sufficient conditions for rigidity on $\Tor^2$}
We have now seen necessary conditions for the rigidity of frameworks on the fixed torus. As we have noted above, these conditions are clearly not sufficient in general, and moreover, the sufficiency of any condition for the rigidity of periodic frameworks on $\Tor^d$ depends directly on a characterization of $d$-dimensional finite rigidity. This problem is open for $d >2$. In the case that $d=1$ or $d=2$, however, combinatorial characterizations of finite rigidity exist, and we show in a subsequent paper \cite{ThesisPaper2} that Theorem \ref{thm:dNecessary} is both necessary and sufficient for infinitesimal rigidity on the fixed torus when $d = 1$ or $2$. Furthermore, we prove a version of Henneberg's theorem for periodic graphs, demonstrating that all infinitesimally rigid orbit frameworks on $\Tor^2$ can be built up inductively from a sequence of smaller graphs. 

%\subsection{The flexible torus}
%As mentioned, it is possible to develop a companion theory of infinitesimal motions for frameworks on the flexible or partially flexible torus. This has considered by the author in \cite{myThesis}, and it has also been studied by Borcea and Streinu \cite{periodicFrameworksAndFlexibility}, and Malestein and Theran \cite{Theran}, among others. 

\subsection{Incidentally periodic frameworks}
Throughout this paper, we have been concerned with the topic of {\it forced periodicity}. That is, we have considered periodic frameworks and asked about their rigidity with respect to periodicity-preserving motions. A natural question is about relaxing this restriction to consider {\it incidentally periodic} frameworks, which are infinite frameworks which happen to be periodic, but where we do not require that the periodicity be preserved by infinitesimal motions of the structure.  A question of interest thus becomes: When is a periodic framework flexible, where the flexes may or may not preserve the periodicity of the structure? 

We present a conjecture pertaining to incidentally periodic frameworks:
\begin{conj}
If a framework $\pofw$ is infinitesimally rigid on the flexible torus, then it is infinitesimally rigid as an incidentally periodic (infinite) framework $(\wt G, \wt p)$. 
\end{conj}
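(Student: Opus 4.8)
The principled tool here is a Fourier, or Bloch-wave, analysis of the infinite derived framework $(\wt G, \wt \p) = (G^{\bm}, \p^{\bm})$, and the plan is to reduce incidental infinitesimal rigidity to a family of finite rank conditions indexed by a phase. First I would decompose an arbitrary infinitesimal motion $\wt u : V^{\bm} \to \mathbb R^d$ into phased modes $\wt u(v_i, z) = \int_{[0,1)^d} \hat u_\omega(v_i)\, e^{2\pi \sqrt{-1}\,(\omega \cdot z)}\, d\omega$, with $\omega$ ranging over the Brillouin torus $[0,1)^d$. Because the framework is $\mathbb Z^d$-periodic, the edge constraints of $(\wt G, \wt \p)$ decouple across phases, so that $\wt u$ is an infinitesimal flex precisely when $\hat u_\omega \in \ker \Phi(\omega)$ on a positive-measure set of $\omega$, where $\Phi(\omega)$ is the $|E| \times d|V|$ phased rigidity matrix whose row for $e = \{v_i, v_j; \bm_e\}$ carries $(\p_i - \p_j - \bm_e L_0)$ in the block of $v_i$ and $-e^{2\pi \sqrt{-1}\,(\omega \cdot \bm_e)}(\p_i - \p_j - \bm_e L_0)$ in the block of $v_j$. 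Incidental infinitesimal rigidity is then equivalent to $\Phi(\omega)$ having maximal rank for every $\omega$: rank $d|V|-d$ at $\omega = 0$, and full rank $d|V|$ for $\omega \neq 0$ (where no translational kernel survives). At $\omega = 0$ the matrix $\Phi(0)$ is exactly the fixed-torus matrix $\R_0\pofw$.

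Second, I would pin down which phase data the flexible-torus hypothesis actually controls. An affine-periodic velocity field $\wt u(v_i, z) = u_i + z\dot L$ substituted into the edge constraints reproduces precisely the flexible-torus equations $(u_i - u_j - \bm_e \dot L)\cdot(\p_i - \p_j - \bm_e L_0) = 0$; moreover $\bm_e$ is, up to a constant, the $\omega$-derivative at $0$ of $e^{2\pi \sqrt{-1}\,(\omega\cdot \bm_e)}$, so the flexible-torus rigidity matrix encodes exactly the value \emph{and} the first-order behaviour of $\Phi(\omega)$ at the single point $\omega = 0$. Thus one verifies that the flexible-torus rank criterion of Section~\ref{sec:theRigidityMatrix} amounts to $\rank \Phi(0) = d|V|-d$ (equivalently $\pofw$ is fixed-torus rigid, Theorem~\ref{thm:fixedMatrixRank}) together with the absence of any nontrivial first-order drift flex (rotations being the only permitted drift). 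Consequently incidental rigidity is equivalent to flexible-torus rigidity \emph{and} full rank of $\Phi(\omega)$ for all $\omega \neq 0$: the hypothesis already delivers the $\omega = 0$ data outright, and the entire content of the conjecture is to propagate maximal rank to $\omega \neq 0$.

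Third, for the propagation I would exploit that $\rank \Phi(\omega)$ is lower semicontinuous and that the locus $Z = \{\omega : \rank \Phi(\omega) < d|V|\}$ is a closed real-algebraic subset of the Brillouin torus, cut out by the simultaneous vanishing of the maximal minors of $\Phi$, which are trigonometric polynomials in the $\omega \cdot \bm_e$. One concrete line of attack is to settle the conjecture first at every \emph{rational} phase: a rational $\omega$ is the zero phase of the framework realized on a finite-index sublattice, i.e.\ on a coarser torus obtained by unrolling $G^{\bm}$ over a finite-index subgroup of $\mathbb Z^d$. If one can show that flexible-torus rigidity is inherited by every such sublattice cover, then each rational phase is forced to lie outside $Z$; since the rational phases are dense and $Z$ is closed, emptiness of $Z$ on a dense set yields $Z = \emptyset$, hence incidental rigidity.

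The step I expect to be the genuine obstacle is exactly this propagation from $\omega = 0$ (equivalently, from all rational phases, if the sublattice inheritance can be established) to all phases, and I do not expect it to hold without further hypotheses -- which is precisely why the statement is posed as a conjecture. The value and first derivative of $\Phi$ at $\omega = 0$ carry no formal information about a generic or irrational phase, so $Z$ may legitimately be nonempty, and a nonzero $\omega_0 \in Z$ produces a bounded non-periodic flex of $(\wt G, \wt \p)$ with no affine-periodic counterpart. This is the familiar gap between flexible-lattice periodic rigidity and rigidity at all phases, the source of the distinction between periodic rigidity and ``ultrarigidity'' in the crystallographic rigidity literature (cf.\ Owen--Power \cite{InfiniteBarJointFrameworksCrystals}, Power \cite{PowerAffine}, and Borcea--Streinu \cite{periodicFrameworksAndFlexibility}). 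The critical sub-problem is therefore to prove that flexible-torus rigidity is inherited by all finite-index sublattice covers and that the resulting rational-phase rigidity forces $Z = \emptyset$; a failure of inheritance would instead manufacture an explicit incidental flex, i.e.\ a counterexample to the conjecture.
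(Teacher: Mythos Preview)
The paper does not prove this statement: it is presented in Section~\ref{sec:furtherWork} as an open conjecture, with no proof or proof sketch. There is therefore nothing in the paper to compare your proposal against.

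That said, your write-up is not a proof either, and to your credit you say so explicitly. The Bloch-wave decomposition you outline is the standard and correct framework for attacking this question: reducing incidental infinitesimal rigidity of $(\wt G,\wt\p)$ to the full-rank condition on the phased matrix $\Phi(\omega)$ for every $\omega$, and identifying the flexible-torus hypothesis with control of $\Phi$ and its first variation at $\omega=0$ only. Your third step correctly isolates the crux --- propagating maximal rank from $\omega=0$ (or from rational phases) to all phases --- and you are right that nothing in the hypothesis forces this. Indeed, the gap you name is exactly the known obstruction: there exist periodic frameworks that are rigid on the fully flexible lattice yet possess nontrivial Bloch flexes at nonzero phase (the ``ultrarigidity'' vs.\ periodic-rigidity distinction you cite). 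So your proposal is best read not as a proof but as a faithful diagnosis of why the conjecture is open and what a proof or counterexample would have to look like.

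One technical caution: your claimed equivalence ``$\wt u$ is an infinitesimal flex precisely when $\hat u_\omega\in\ker\Phi(\omega)$ on a positive-measure set of $\omega$'' is not quite right as stated. For an arbitrary (possibly unbounded, non-$\ell^2$) velocity field $\wt u$ on $V^{\bm}$ the Fourier integral need not exist, and the correct statement of incidental infinitesimal rigidity in this language requires care about which class of flexes one allows (bounded, $\ell^2$, tempered, etc.). This does not affect your overall diagnosis, but a genuine proof attempt would need to fix a function class before the decoupling argument goes through.
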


\subsection{Periodic bar-body frameworks}
One natural extension of the work in this paper is to {\it periodic bar-body frameworks}.  The generic rigidity of {\it finite} bar-body frameworks is completely characterized in $d$-dimensions with polynomial time algorithms \cite{Tay2}, and a recent proof of the Molecular Conjecture expands this characterization to molecular frameworks \cite{MolecularConj}. \index{Molecular Conjecture} That is, unlike bar-joint frameworks for $d \geq 3$, the generic rigidity of bar-body frameworks for all $d$ can be understood through combinatorial methods alone. 

In particular, Theorem \ref{thm:dNecessary} can be translated to the bar-body setting, where we conjecture that it also provides sufficient conditions for the generic rigidity of periodic bar-body orbit frameworks on $\Tor^d$.

%\subsection{Discrete scaling of the fundamental region}
%\label{sec:scaling} \index{discrete scaling}
%Throughout this work, we have implicitly assumed that we are working with the smallest possible fundamental region (unit cell). That is, for a periodic framework $\pfw$,  the translations forming the rows of $L$ are minimal, in that no smaller translations will generate the same framework. What happens if we relax this assumption? That is, suppose we choose a unit cell which is a discrete multiple of the {\it smallest} unit cell. We will refer to this operation as {\it discrete scaling}, and the periodic orbit framework on the torus generated by the larger unit cell as the {\it scaled framework}. 
%
%There are several questions which arise:  Under what conditions will discrete scaling of the fundamental region of a periodic framework $\pfw$ maintain its {\it generic} rigidity properties as an orbit framework on the torus?  Note that, geometrically, discrete scaling will always take a generic periodic orbit framework on the torus to a non-generic periodic orbit framework on a larger torus.  Perhaps a more interesting question is the following: Under what conditions will discrete scaling of the fundamental region of a periodic framework $\pfw$  maintain its {\it geometric} rigidity properties as an orbit framework on the torus? In other words, given a framework that is infinitesimally rigid with some fundamental region, when is the (non-generic) scaled framework still infinitesimally rigid?  
%

\bibliographystyle{abbrv} 
\bibliography{../papers}

\end{document}